\newtheorem{definition}{Definition}
\newtheorem{theorem}{Theorem}
\newtheorem{assumption}{Assumption}
\newtheorem{lemma}{Lemma}
\newtheorem{remark}{Remark}
\newcommand{\R}{\mathbb R}
\newcommand{\PP}{\mathbb P}
\newcommand{\E}{\mathbb E}
\newcommand{\K}{\mathcal K_d}
\newcommand{\Sp}{\mathbb S^{d-1}}
\newcommand{\KK}{\mathcal K_d^{(1)}}
\newcommand*\diff{\mathop{}\!\mathrm{d}}
\newcommand{\DS}{\displaystyle}
\begin{document}

\begin{frontmatter}

\title{Methods for Estimation of Convex Sets}
\runtitle{Estimation of Convex Sets}


\author{\fnms{Victor-Emmanuel} \snm{Brunel}\ead[label=e1]{vebrunel@mit.edu}}
\address{\printead{e1}}
\affiliation{Massachusetts Institute of Technology, Department of Mathematics}

\runauthor{V.-E. Brunel}

\begin{abstract}

In the framework of shape constrained estimation, we review methods and works done in convex set estimation. 
These methods mostly build on stochastic and convex geometry, empirical process theory, functional analysis, linear programming, extreme value theory, etc. The statistical problems that we review include density support estimation, estimation of the level sets of densities or depth functions, nonparametric regression, etc. We focus on the estimation of convex sets under the Nikodym and Hausdorff metrics, which require different techniques and, quite surprisingly, lead to very different results, in particular in density support estimation. Finally, we discuss computational issues in high dimensions.

\end{abstract}

\begin{keyword}[class=MSC]
\kwd[Primary ]{62-02}
\kwd{62G05}
\end{keyword}

\begin{keyword}
\kwd{Convex body}
\kwd{Set estimation}
\kwd{Nikodym metric}
\kwd{Hausdorff metric}
\kwd{Support function}
\end{keyword}

\end{frontmatter}

\section{Preliminaries}\label{Sec:Prelim}

\subsection{Introduction}

In nonparametric inference, the unknown object of interest cannot be described in terms of a finite number of parameters. Examples include density estimation, nonparametric and high dimensional regression, support estimation, etc. Since the number of observations is only finite, it is necessary to make assumptions on the object of interest in order to make statistical inference significant. Two types of assumptions are most common: Smoothness assumptions and shape constraints. A smoothness assumption usually imposes differentiability up to some fixed order, with bounded derivatives (the reader could find an introduction to the estimation of smooth density or regression functions in \cite[Chapter 1]{Tsybakov2009}; \cite{MammenTsybakov1995} imposes smoothness assumptions on the boundary of the support of an unknown density or on the boundary of an unknown set in image reconstruction from random observations). Shape constraints rather impose conditions such as monotonicity, convexity, log-concavity, etc. (e.g., \cite{kim2016global} assumes log-concavity of the unknown density; \cite{ChatterjeeAl2015} imposes monotonicity or more general shape constraints on the unknown regression function; \cite{KorostelevSimarTsybakov1995'} imposes a monotonicity or a convexity constraint on the boundary of the support of the unknown density; \cite{KorostelevTsybakov1994,Brunel2016} impose convexity on the support of the unknown distribution).

Smoothness is a quantitative condition, whereas a shape constraint is usually qualitative. Smoothness classes of functions or sets depend on meta parameters, such as the number of existing derivatives or upper bounds on some functional norms. However, in statistical applications, these meta parameters are unlikely to be known to the practitioner. Yet, statistical inference usually requires to choose tuning parameters that depend on these meta parameters. One way to overcome this issue is to randomize the tuning parameters and apply data driven adaptive procedures such as cross validation. However, such procedures are often technical and computationally costly. On the opposite, shape constraints usually do not introduce extra parameters, which makes them particularly attractive.

Many different shape constraints can be imposed on sets. For instance, \cite{Tsybakov1994,KorostelevSimarTsybakov1995,KorostelevSimarTsybakov1995'} consider boundary fragments, which are the subgraphs of positive functions defined on a hypercube (or, more generally, on a metric space). Shape constraints on such sets directly translate into shape constraints on their edge functions. For general sets, convexity is probably the most simple shape constraint, even though it leads to a very rich field in geometry. Convexity can be extended to the notion of $r$-convexity, where an $r$-convex set is the complement of the union of open Euclidan balls of radius $r$, $r>0$ (see, e.g., \cite{ManiLevitska1993} and \cite{Rodriguez2007,Pateiro2008} for set estimation under $r$-convexity and, more generally, \cite{Cuevas2009,cuevas2012statistical} for broader shape constraints in set estimation). Informally, convexity is the limit of $r$-convexity as $r$ goes to $\infty$. In set estimation, if it is assumed that the unknown set is $r$-convex for some $r>0$, the meta parameter $r$ may also be unknown to the practitioner and \cite{Rodriguez2016} defines a data-driven procedure that adapts to $r$. In the present article, we only focus on convexity, which is a widely treated shape constraint in statistics. On top of convexity, two additional constraints are common in statistics: the rolling ball condition and standardness. A convex set $G$ is said to satisfy the $r$-rolling ball condition ($r>0$) if, for all $x$ on the boundary of $G$, there is a Euclidean ball $B(a,r)$ such that $x\in B(a,r)\subseteq \bar G$, where $\bar G$ is the closure of $G$ (see \cite{Walther1997,Walther1999} for characterizations of the rolling ball condition, connections with $r$-convexity and statistical applications in set estimation). An equivalent condition is that the complement of $G$ has reach at least $r$. The reach of a set is the supremum of all positive numbers $\varepsilon$ such that any point within a distance $\varepsilon$ of that set has a unique metric projection onto the closure of that set (see \cite[Definition 11]{Thale2008}). A convex set $G$ is called $\nu$-standard ($\nu\in (0,1)$) if for all $x$ on its boundary, $\textsf{Vol}\left(G\cap B(x,\varepsilon)\right)\geq \nu\textsf{Vol}\left(B(x,\varepsilon)\right)$, for all $\varepsilon>0$ small enough. This roughly means that the set $G$ does not have peaks. 

In general, two main types of convex bodies are distinguished in the literature. 
\begin{itemize}
	\item Convex bodies with smooth boundary: The boundary $\partial G$ of a convex body $G$ is \textit{smooth} if for all $x\in\partial G$, $G$ has a unique supporting hyperplane that contains $x$. In that case, let $H_x$ be the unique supporting hyperplane containing $x$ and let $\eta_x$ be the unit vector orthogonal to $H_x$ and pointing towards the inside of $G$. Identify the $(d-1)$-dimensional linear subspace $H_x-x=\{z-x:z\in H_x\}$ with $\R^{d-1}$; Then, every $y\in\partial G$ that is in some neighborhood of $x$ can be written uniquely as $y=x+t+f_x(t)\eta_x$, where $t\in H_x-x$ and $f_x$ is a nonnegative convex function defined in a neighborhood of $0$ in $H_x-x$. If the Hessian of $f_x$ at $0$ is positive definite, $\partial G$ is said to have \textit{positive curvature} at $x$. Otherwise, $\partial G$ has \textit{zero curvature} at $x$. 
	\item Convex polytopes: A convex polytope (in short, a polytope) is the convex hull of finitely many points in $\R^d$. By the Minkowski--Weyl theorem, a polytope can also be represented as the intersection of finitely many closed halfspaces. The supporting hyperplane of a polytope $P$ containing $x\in\partial P$ is unique if $x$ is not in a $k$-dimensional face of $P$ for some $k\leq d-2$, and $P$ has zero curvature at all such boundary points $x$.
\end{itemize}
We refer the readers who are interested in learning more about convex bodies to \cite{Schneider1993}, and to \cite{Ziegler1995} for a comprehensive study of convex polytopes.

In the field of nonparametric statistics, the problem of set estimation arose essentially with the works \cite{Geffroy1964} (\textit{On a geometric estimation problem}) and \cite{Chevalier1976}, which deal with the estimation of the support of a density in a general setup. A simple and natural estimator of the support of an unknown density was introduced in \cite{DevroyeWise1980}, where the estimator is defined as the union of small Euclidean balls centered around the data points. In fact, this estimator is equal to the support of a kernel density estimator for the kernel that is the indicator function of the Euclidean unit ball.

The scope of this survey is the estimation of convex sets. We aim to give an exposition of several methods that build on stochastic and convex geometry, empirical process theory, functional analysis, linear programming, order statistics and extreme value theory, etc. Different models associated with the estimation of convex sets include density support estimation \cite{KorostelevTsybakov1993,KorostelevSimarTsybakov1995,KorostelevSimarTsybakov1995',Brunel2016,BrunelHausdorff2017,Brunel2017
}, density level set estimation \cite{Hartigan1987,Polonik1995,Tsybakov1997}, inverse problems in density support estimation \cite{BrunelKlusowski2017}, estimation of the support of a regression function \cite{KTlectureNotes1993,Tsybakov1994,Brunel2013}, estimation of the level sets of the Tukey depth function \cite{BrunelTDLS2017}, estimation of support functions \cite{GardnerKiderlenMilanfar2006,Guntuboyina2012}, etc.

Throughout this survey, a \textit{set estimator} is a set-valued statistic, i.e., a set which depends on the observed random variables. A precise definition would be necessary in order to rule out measurability issues. However, in order to keep the focus on convex set estimation, we rather choose not to mention these issues, and all probabilities (resp. expectations) should be understood as outer probabilities (resp. expectations). For detailed accounts on set-valued random variables, we refer to \cite{Molchanov2005}.

Before going more into the details, let us introduce some notation and definitions.

\subsection{Notation and Definitions}

In the sequel, $d$ is a positive integer, standing for the ambient dimension. For a positive integer $p$, the closed $p$-dimensional Euclidean ball with center $a\in\R^p$ and radius $r\geq 0$ is denoted by $B_p(a,r)$. If $p=d$, we may omit the subscript $p$. The $(p-1)$-dimensional unit sphere is denoted by $\mathbb S^{p-1}$ and the volume of the $p$-dimensional unit Euclidean ball is denoted by $\beta_p$. The Euclidean norm in $\R^d$ is denoted by $\|\cdot\|$, the Euclidean distance is $\rho$ and we write $\langle\cdot,\cdot\rangle$ for the canonical dot product.

A convex body $G\subseteq \R^d$ is a compact and convex set with nonempty interior. We denote by $\mathcal K_d$ the collection of all convex bodies and by $\mathcal K_d^{(1)}$ the collection of all convex bodies included in $B(0,1)$. The support function $h_G$ of a convex body $G$ is defined as $h_G(u)=\max\{\langle u,x\rangle:x\in G\}$, for all $u\in\Sp$: It is the signed distance of tangent hyperplanes to the origin.

The volume of a measurable set $A\subseteq\R^d$ is denoted by $|A|$.

The Nikodym distance between two measurable sets $K,L\subseteq\R^d$ is the volume of their symmetric difference: $\textsf{d}_\triangle(K,L)=|K\triangle L|$. The Hausdorff distance between any two sets $K,L\subseteq\R^d$ is defined as $\textsf{d}_{\textsf{H}}(K,L)=\inf\{\varepsilon\geq 0: K\subseteq L+\varepsilon B(0,1), L\subseteq K+\varepsilon B(0,1)\}$.

The cardinality of a finite set $A$ is denoted by $\#A$. 

When i.i.d. random points $X_1,\ldots,X_n\in\R^d$ have a density $f$ with respect to the Lebesgue measure, we denote by $\PP_f$ their joint distribution and by $\E_f$ the corresponding expectation operator, where we omit the dependency on $n$ for simplicity. When $f$ is the uniform density on a compact set $G$, we simply write $\PP_G$ and $\E_G$. 
The convex hull of $X_1,\ldots,X_n$ is denoted by $\hat K_n$. 

In this article, most, if not all, set-valued estimators are polytopes that depend on a finite random sample. Nonetheless, in order to be consistent with the literature, we reserve the name \textit{random polytope} for $\hat K_n$ only.

\subsection{Outline}

In order to assess the quality of a set estimator, the Nikodym and the Hausdorff metrics are most commonly used. Depending on which of these two metrics is to be used, the techniques in estimation of convex sets may differ a lot.

Section \ref{Sec:RandomPolytopes} is devoted to the estimation of convex sets under Nikodym-type metrics, especially in density support estimation. We first review essential properties of random polytopes and we relate them to the problem of support estimation under the Nikodym metric. We also recall well known results on the covering numbers of classes of convex bodies and show how these can be used in order to obtain deviation inequalities in convex support estimation. Then, we review extensions of these results to the estimation of density level sets under convexity and we discuss other convex set estimation problems under the Nikodym metric.

In Section \ref{Sec:SupportFunction}, we switch to the estimation of convex bodies under the Hausdorff metric. An elementary, yet essential result, stated in Lemma \ref{HausdorffSuppFunc}, shows that the Hausdorff distance between two convex bodies can be computed through their respective support functions. We review important properties related to the support functions of convex bodies and we show how they apply to the estimation of convex sets under the Hausdorff metric.

Finally, in Section \ref{Sec:CurseOfDimension}, we briefly discuss the computational aspects of convex set estimation in high dimensions. We show, through two examples, how to reduce the computational cost without affecting the rate of convergence of convex set estimators.

\section{Estimation of convex sets under the Nikodym metric} \label{Sec:RandomPolytopes}

\subsection{Random polytopes and density support estimation} \label{Subsec:RandomPol}

The most common representation of random polytopes consists of taking the convex hull of i.i.d. random points in $\R^d$. Stochastic and convex geometry have provided powerful tools to understand the properties of random polytopes, since the seminal works \cite{RenyiSulanke1963,RenyiSulanke1964}. In these two papers, $d=2$ and the random polygon is the convex hull of $n$ i.i.d. random points with the uniform distribution in a planar convex body. The expectation of the missing area and of the number of vertices of the random polygon are computed, up to negligible terms as $n$ goes to infinity. The results substantially depend on the structure of the boundary of the support. Namely, the expected missing area decreases significantly faster when the support is itself a polygon than when its boundary has positive curvature everywhere. The missing area is exactly the Nikodym distance between the random polygon and the support of the random points. Hence, \cite{RenyiSulanke1963,RenyiSulanke1964} give an approximate value of the risk of the random polygon as an estimator of the convex support. Later, much effort has been devoted to extend these results to higher dimensions, starting with \cite{Efron1965}, that proves integral formulas for the expected missing volume, surface area, number of vertices, etc. in dimension 3. Among most general results, a ground breaking one is due to \cite{BaranyLarman1988}. Define the \textit{$\varepsilon$-floating body} of a convex body $G\in\mathcal K_d$ as the set of points $x\in G$ such that any closed halfspace $H\subseteq \R^d$ containing $x$ has an intersection with $G$ whose volume is at least a fraction $\varepsilon$ of the total volume of $G$, i.e., satisfies $|G\cap H|\geq \varepsilon |G|$, where $\varepsilon\in (0,1)$ (see \cite{Dupin1822,Blaschke1923,SchuttWerner1990}). The \textit{$\varepsilon$-wet part} of $G$, denoted by $G(\varepsilon)$, is the complement of the $\varepsilon$-floating body of $G$ in $G$. If one thinks of $G$ as an iceberg seen from above, the floating body is the part of $G$ that is above the surface of the water, whereas the wet part is the immersed part of the iceberg.

\begin{theorem}[\cite{BaranyLarman1988}] \label{ThmBaranyLarman}

Let $G\in\mathcal K_d$ have volume one. Then,
$$c_1|G(1/n)|\leq \E_G[|G\setminus\hat K_n|]\leq c_2(d)|G(1/n)|, \quad \forall n\geq n_0(d),$$
where $c_1$ is a universal positive constant, $c_2(d)$ is a positive constant that depends on $d$ only and $n_0(d)$ is a positive integer that depends on $d$ only.

\end{theorem}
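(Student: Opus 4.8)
The plan is to establish the two bounds separately, both exploiting the basic identity
\[
\E_G[|G\setminus\hat K_n|] = \int_G \PP_G(x\notin\hat K_n)\,\diff x,
\]
which follows from Fubini's theorem after writing $|G\setminus\hat K_n| = \int_G \mathds{1}_{x\notin\hat K_n}\,\diff x$ (recall $|G|=1$, so $\PP_G$ is just Lebesgue measure restricted to $G$). Thus everything reduces to estimating, for a fixed $x\in G$, the probability that $x$ falls outside the convex hull of the $n$ sample points, and comparing the integral of this probability against $|G(1/n)|$.

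For the lower bound, I would show that $\PP_G(x\notin\hat K_n)$ is bounded below by a universal constant whenever $x$ lies in the wet part $G(1/n)$. Indeed, if $x\in G(1/n)$, then by definition there is a halfspace $H$ with $x\in H$ and $|G\cap H|\le 1/n$; the event that \emph{all} $n$ sample points avoid $G\cap H$ has probability $(1-|G\cap H|)^n \ge (1-1/n)^n \ge c$ for some universal $c>0$ and $n$ large, and on this event $x$ is separated from $\hat K_n$ by the bounding hyperplane of $H$. Integrating over $x\in G(1/n)$ gives $\E_G[|G\setminus\hat K_n|]\ge c\,|G(1/n)|$. The slightly delicate point is choosing $H$ so that the \emph{closed} complementary halfspace still strictly separates $x$ from the hull; this is handled by a small perturbation (shrinking $H$ slightly, which only changes constants) or by working with open halfspaces in the definition of the floating body.

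For the upper bound one splits $G = G(1/n)\,\cup\,(G\setminus G(1/n))$. On the floating body $G\setminus G(1/n)$ one must show $\PP_G(x\notin\hat K_n)$ is small — this is where the genuine work lies. The standard route is the Bárány–Larman "economic cap covering" technology: cover the wet part $G(1/n)$ by a controlled number of caps, each of volume comparable to $1/n$, such that if $x\notin\hat K_n$ then some cap of volume $\asymp 1/n$ "behind" $x$ must be empty of sample points, an event of probability $\le (1-c/n)^n\le e^{-c}$; summing a geometric-type series over the nested family of caps at scales $2^k/n$ and using that the number of caps at each scale is controlled by $|G(\cdot)|$ yields $\int_{G\setminus G(1/n)}\PP_G(x\notin\hat K_n)\,\diff x \le c_2(d)\,|G(1/n)|$. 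On $G(1/n)$ itself one simply bounds the probability by $1$ and uses $\int_{G(1/n)} 1\,\diff x = |G(1/n)|$. Adding the two pieces gives the claimed $c_2(d)|G(1/n)|$.

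The main obstacle is unquestionably the upper bound on $\int_{G\setminus G(1/n)}\PP_G(x\notin\hat K_n)\,\diff x$: controlling the probability that a point well inside the floating body is nonetheless missed requires the full strength of the economic cap covering lemma (existence of a family of caps $C_1,\dots,C_m$ with $m\le c(d)$, $|C_i|\asymp 1/n$, covering $G(1/n)$, and with a matching lower bound on the volumes of slightly enlarged caps), together with the geometric fact that missing $x$ forces an empty cap at a definite scale depending on the depth of $x$. The dimension-dependence of $c_2(d)$ and of the threshold $n_0(d)$ enters precisely through this covering lemma and the associated combinatorics; I would import it as a black box from Bárány–Larman rather than reprove it, and devote the writeup to assembling the Fubini identity, the easy lower bound, and the dyadic summation over cap scales.
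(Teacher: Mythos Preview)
The paper does not give its own proof of this theorem: it is quoted as a result of B\'ar\'any and Larman \cite{BaranyLarman1988}, stated without proof, and the paper immediately moves on to discuss its consequences (the volume of the wet part for polytopes versus smooth bodies). So there is no in-paper argument to compare your proposal against.

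That said, your outline is a faithful sketch of the original B\'ar\'any--Larman proof. The Fubini identity $\E_G[|G\setminus\hat K_n|]=\int_G \PP_G(x\notin\hat K_n)\,\diff x$ is exactly the starting point; the lower bound via an empty cap of volume $\le 1/n$ containing $x$ is the easy direction and your treatment is correct (the separation issue you flag is genuinely a non-issue once one passes to the open halfspace); and the upper bound is indeed the substantial part and does rest on the economic cap covering lemma together with a dyadic decomposition over cap scales. Your decision to import the cap covering as a black box is sensible for a writeup at this level. One small inaccuracy: you write ``$m\le c(d)$'' for the number of caps in the covering, but the bound on $m$ is not a dimensional constant---it is controlled in terms of $|G(\varepsilon)|/\varepsilon$ (roughly, the number of caps at scale $\varepsilon$ is $\asymp_d |G(\varepsilon)|/\varepsilon$), and this is precisely what makes the dyadic sum collapse to a constant multiple of $|G(1/n)|$. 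If you carry out the summation with this correction, the argument goes through.
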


As a consequence, computing the expected missing volume of $\hat K_n$ asymptotically reduces to computing the volume of the $(1/n)$-wet part of $G$, which is no longer a probabilistic question. In addition, it is also known \cite{BaranyLarman1988} that if $G$ has volume one and $\varepsilon$ goes to zero, $|G(\varepsilon)|$ is of the order at least $\varepsilon\ln(1/\varepsilon)$ and at most $\varepsilon^{2/{d+1}}$ \cite{BaranyLarman1988}. The former rate is achieved when $G$ is a polytope whereas the latter rate is achieved when $G$ has a smooth boundary with positive curvature everywhere.

In fact, when $G$ is a smooth convex body with positive curvature everywhere, it is shown in \cite{Schutt1994} that 
\begin{equation} \label{ThmSchutt1994}
	n^{2/(d+1)}\E_G\left[\frac{|G\setminus\hat K_n|}{|G|}\right] \longrightarrow c(d,G), \hspace{4mm} n\to\infty,
\end{equation}
where $c(d,G)$ is an explicit positive constant that depends on $d$ and $G$ and that is affine invariant in $G$, i.e., $c(d,G)=c(d,T(G))$ for all invertible affine transormations $T$. \cite{Schutt1994} actually shows that this convergence holds for all convex bodies $G$, by noting that all convex bodies have a unique supporting hyperplane at almost all their boundary points (e.g., almost all boundary points of a polytope lie on a $(d-1)$-dimensional face), and where $c(d,G)$ is equal to zero if and only if $\partial G$ has zero curvature almost everywhere (e.g., if $G$ is a polytope).

An interesting result, due to \cite{Groemer1974}, shows that the quantity $\DS \E_G\left[\frac{|G\setminus\hat K_n|}{|G|}\right]$ is maximum when $G$ is an ellipsoid. In that case, it can be derived from \cite{Schutt1994} that the constant $c(d,G)$ in \eqref{ThmSchutt1994} is of the order $d^{d+o(d)}$, as $d$ becomes large. As a consequence, when the dimension $d$ becomes too large, the random polytope $\hat K_n$ performs poorly as an estimator of $G$ in the worst case, because it suffers the curse of dimensionality, both in the rate $n^{-2/(d+1)}$ and in the constant factor $d^{d+o(d)}$. Yet, it is known that the rate $n^{-2/(d+1)}$ cannot be improved in a minimax sense. The following result is proven in \cite{Brunel2016} where, for two sequences $a_n$ and $b_n$ of positive numbers, we write $a_n\lesssim_\theta b_n$ if $a_n\leq c(\theta)b_n, \forall n\geq 1$, for some positive constant $c(\theta)$ that depends on a parameter $\theta$. In the sequel, we also write $\lesssim$ with no subscript if the involved constant is universal.

\begin{theorem}[\cite{Brunel2016}] \label{ThmMinimaxSuppEst}

	The following inequalities hold:
$$n^{-\frac{2}{d+1}}\lesssim_d\inf_{\tilde G_n} \sup_{G\in\mathcal K^d}\E_G\left[\frac{|G\triangle\tilde G_n|}{|G|}\right]\leq \sup_{G\in\mathcal K^d}\E_G\left[\frac{|G\triangle\hat K_n|}{|G|}\right]\lesssim_d n^{-\frac{2}{d+1}},$$
where the infimum is taken over all estimators $\tilde G_n$ based on $n$ i.i.d. observations.

\end{theorem}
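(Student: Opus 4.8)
The plan is to establish the three claimed inequalities separately: the lower bound on the minimax risk, and the upper bound for the specific estimator $\hat K_n$ (the middle inequality being trivial, since $\hat K_n$ is a legitimate estimator). The upper bound is essentially a consequence of results already available. By Theorem \ref{ThmBaranyLarman}, for $G$ of volume one we have $\E_G[|G\setminus\hat K_n|]\lesssim_d |G(1/n)|$, and since $\hat K_n\subseteq G$ almost surely, $|G\triangle\hat K_n|=|G\setminus\hat K_n|$, so the normalized risk is controlled by $|G(1/n)|$. Then one invokes the worst-case bound $|G(\varepsilon)|\lesssim \varepsilon^{2/(d+1)}$ for volume-one bodies (attributed above to \cite{BaranyLarman1988}), with $\varepsilon=1/n$, giving $n^{-2/(d+1)}$ uniformly over $G\in\mathcal K_d$. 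Affine invariance of the normalized risk lets one reduce to volume-one bodies, and one should check the $n\ge n_0(d)$ restriction is harmless (absorb small $n$ into the constant). So the upper bound is a short argument chaining existing facts.

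For the lower bound I would use a standard information-theoretic reduction — either Assouad's lemma or a two-point/Fano argument adapted to the set-estimation loss. The idea: fix a nice convex body, say the Euclidean ball $B(0,1)$ (or an ellipsoid, to leverage the extremal property from \cite{Groemer1974}), and construct a large finite family of convex bodies $G_\omega$, indexed by $\omega\in\{0,1\}^m$, obtained by locally ``denting'' the boundary of the ball near $m\asymp n^{(d-1)/(d+1)}$ well-separated boundary caps, each dent having depth $\asymp n^{-2/(d+1)}$ and base radius $\asymp n^{-1/(d+1)}$ so that each cap has volume $\asymp n^{-1}$ and the perturbed sets remain convex. Each bit flip changes the volume by $\asymp n^{-1}$, so pairwise Nikodym distances are $\asymp m\cdot n^{-1}\asymp n^{-2/(d+1)}$ in the appropriate Assouad-style sense (loss decomposes as a sum over coordinates). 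The key statistical point is that flipping one bit changes the density $\PP_{G_\omega}$ (uniform on $G_\omega$) only on a region of volume $\asymp n^{-1}$, so the $\chi^2$- or KL-divergence between neighbors is $O(1/n)$ per coordinate, hence $n$ observations cannot reliably recover the bits; Assouad's lemma then yields the $n^{-2/(d+1)}$ lower bound.

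The main obstacle, and the step deserving the most care, is the geometric construction: one must verify that the dented bodies $G_\omega$ are genuinely convex (the dents must be shallow enough and the caps disjoint), that they all lie in $\mathcal K_d$ (compact, convex, nonempty interior — trivial once convexity holds), that the number of disjoint caps of the right scale on $\partial B(0,1)$ really is of order $n^{(d-1)/(d+1)}$ (a sphere-covering/packing estimate), and that the per-coordinate volume change and the per-coordinate divergence are simultaneously of the right order so that the product $m\times(\text{separation})$ matches $n^{-2/(d+1)}$ while $m\times(\text{divergence per coordinate})=O(m/n)=o(1)$ — wait, here one must be slightly careful: Assouad requires the divergence between neighbors differing in a single coordinate to be bounded, which is $O(1/n)$, not the total, so this is fine, but the bookkeeping of constants (all depending on $d$) must be done cleanly.

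I would present the argument in this order: (i) reduce to volume one by affine invariance; (ii) prove the upper bound via Theorem \ref{ThmBaranyLarman} and the wet-part estimate; (iii) set up the hypercube family of dented balls and check convexity and the cap count; (iv) bound the KL (or $\chi^2$) divergence between single-coordinate neighbors by $O(1/n)$; (v) apply Assouad's lemma to conclude $\inf_{\tilde G_n}\sup_G \E_G[|G\triangle\tilde G_n|/|G|]\gtrsim_d m\cdot n^{-1}\gtrsim_d n^{-2/(d+1)}$.
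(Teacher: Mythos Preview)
Your proposal is correct. For the upper bound, the paper's own argument is essentially the same as yours, with one cosmetic difference: rather than invoking the uniform wet-part bound $|G(\varepsilon)|\lesssim_d\varepsilon^{2/(d+1)}$ directly, the paper appeals to Groemer's extremality result \cite{Groemer1974} (the normalized risk is maximized at ellipsoids) and then evaluates the $(1/n)$-wet part of a volume-one Euclidean ball. Both routes pass through Theorem~\ref{ThmBaranyLarman} and yield the same conclusion; your version is arguably more direct since it avoids the extra citation.

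For the lower bound, the paper is a survey and does not reproduce a proof---it simply attributes the result to \cite{Brunel2016}. Your Assouad construction (a hypercube family of convex bodies obtained by slicing $m\asymp n^{(d-1)/(d+1)}$ disjoint caps of volume $\asymp n^{-1}$ off a ball) is the standard technique and is indeed the approach taken in \cite{Brunel2016}. Your scaling computations are correct; just make sure the ``dents'' are hyperplane cuts (so convexity is automatic as an intersection of convex sets) and that for the divergence step you work with nested neighbors $G_{\omega'}\subseteq G_\omega$ so that the KL between uniform densities is finite, equal to $\ln(|G_\omega|/|G_{\omega'}|)\asymp n^{-1}$ per observation and hence $O(1)$ for the $n$-fold product---exactly what Assouad requires.
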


As a consequence, the random polytope $\hat K_n$ is rate optimal over the class $\mathcal K_d$ in a minimax sense, with respect to the Nikodym metric. The upper bound in Theorem \ref{ThmMinimaxSuppEst} is a direct consequence of Theorem \ref{ThmBaranyLarman}, together with Groemer's result \cite{Groemer1974}: It suffices to evaluate the volume of the $(1/n)$-wet part of a Euclidean ball of volume one. However, it is not clear that $\hat K_n$ is optimal in terms of the constant factors that become exponentially large with the dimension. Note that $\hat K_n\subseteq G$ with probability one, hence, $\hat K_n$ always underestimates the support $G$. This is why the estimation of $G$ through a dilation of $\hat K_n$ could be appealing. It has been considered, e.g., \cite{ripley1977finding} in the planar case for Poisson polytopes, and in \cite{Moore1984} for $d\leq 2$, but only heuristics are given in the general case, except for the estimation of the volume of $G$ in \cite{BaldinReiss2016}.  \cite{BaldinReiss2016} poses the question of the performance of a dilated version of $\hat K_n$ compared to that of $\hat K_n$ itself, but the question remains open.

Note that the lower bound in Theorem \ref{ThmMinimaxSuppEst} is also used in log-concave density estimation. The uniform density on any convex body is log-concave, and for any two convex bodies $G$ and $G'$ of volume $1$, the corresponding uniform densities $f_G$ and $f_{G'}$ satisfy $\|f_G-f_{G'}\|_2^2=|G\triangle G'|$, where $\|\cdot\|$ stands for the $L^2$ norm with respect to the Lebesgue measure in $\R^d$. Hence, some proof techniques for lower bounds on minimax risks in \cite{kim2016global} are based on similar arguments as those used to prove the lower bound in Theorem \ref{ThmMinimaxSuppEst}.

\subsection{Adaptation to polytopal supports} \label{Adapt}

As we already mentioned earlier, an attractive feature of most shape constraints is that no meta parameters are needed to describe the objects of interest, unlike in smoothness classes. 
Nonetheless, classes of functions or sets with a shape constraint usually contain parametric subclasses that correspond to simpler structures, which may depend on meta parameters. For instance, classes of monotone (resp. convex) functions contain piecewise constant (resp. affine) functions. A desirable property of an estimator is adaptation to these simpler structures: If the unknown object belongs to a subparametric class, then the rate of convergence of the estimator should be nearly as good as that of an estimator that would not be agnostic to that simpler structure. In recent years, there have been considerable efforts put in understanding this automatic adaptive features in shape constrained estimators \cite{KimGuntuboyinaSamworth2016,chatterjee2015adaptive,zhang2002risk,ChatterjeeAl2015,bellec2018sharp,han2017isotonic,han2016multivariate}.

Turning to the case of convex set estimation, the class $\K$ contains subclasses of polytopes with bounded number of vertices, hence, whose support functions are piecewise linear with a bounded number of pieces, each piece corresponding to a vertex. 

For the estimation of the convex support of a uniform distribution, the random polytope $\hat K_n$ is the maximum likelihood estimator on the class $\K$. Indeed, the likelihood function is given by $\DS L_n(C):=|C|^{-n}\prod_{i=1}^n\mathds 1_{X_i\in C}=|C|^{-n}\mathds 1_{\hat K_n\subseteq C}$, for all $C\in\K$ which is maximized when $C=\hat K_n$ (note that $\hat K_n\in\K$ with probability $1$ as long as $n\geq d+1$). Recall that, as a consequence of Theorem \ref{ThmMinimaxSuppEst}, in the Nikodym metric, $\hat K_n$ estimates $G$ at the speed $n^{-2/(d+1)}$ in the worst case, i.e., when $G$ has a smooth boundary. When $G$ is a polytope, Theorem \ref{ThmBaranyLarman} implies that $\hat K_n$ estimates $G$ at a much faster speed, namely, $n^{-1}(\ln n)^{d-1}$. A more refined (but not uniform in $G$) result was proven in \cite{BaranyBuchta1993}. For a polytope $P\subseteq\R^d$, let $T(P)$ be the number of flags of $P$, i.e., the number of increasing sequences $F_0\subseteq F_1\subseteq\ldots\subseteq F_{d-1}\subseteq F_d=P$ of faces of $P$ where $F_k$ is a $k$-dimensional face of $P$, $k=0,\ldots,d$. For example, $F(P)=2^d d!$ if $P$ is the $d$-dimensional hypercube, or $F(P)=d!$ if $P$ is the $(d-1)$-dimensional simplex.

\begin{theorem}[\cite{BaranyBuchta1993}]

	Let $G=P$ be a polytope. Then, 
$$\lim_{n\to\infty}\frac{n}{(\ln n)^{d-1}}\E_P\left[\frac{|P\setminus\hat K_n|}{|P|}\right]=\frac{T(P)}{(d+1)^{d-1}(d-1)!}.$$

\end{theorem}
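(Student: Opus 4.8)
The plan is to reduce the missing volume, via a classical identity, to the behaviour of $\hat K_n$ near the vertices of $P$, to localise it there, and then to evaluate each vertex's contribution by an induction on the dimension driven by the flag structure of the tangent cone. Writing $N_n$ for the number of vertices of $\hat K_n$, Efron's identity \cite{Efron1965} gives $\E_P[|P\setminus\hat K_n|]=\frac{|P|}{n+1}\,\E_P[N_{n+1}]$, so the statement is equivalent to $\E_P[N_n]=\big(1+o(1)\big)\,\frac{T(P)}{(d+1)^{d-1}(d-1)!}\,(\ln n)^{d-1}$; it is nonetheless cleaner to keep working with the volume, through the exact identity
\[
\E_P\!\left[|P\setminus\hat K_n|\right]=\int_P\PP_P\!\left[x\notin\hat K_n\right]\diff x .
\]
A point $x$ fails to lie in $\hat K_n$ exactly when some cap $P\cap H$, with $H$ a closed halfspace through $x$, contains no $X_i$; up to a Poissonisation making the point counts of disjoint small regions independent Poisson variables, the probability of this is essentially $\exp\!\big(-n\,|P\cap H|/|P|\big)$ for the smallest-volume such cap. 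By Theorem~\ref{ThmBaranyLarman} the order $n^{-1}(\ln n)^{d-1}$ is already known; the whole difficulty is the exact constant.

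First I would show that the integral concentrates near the vertices. If $x$ lies at distance at least $\delta$ from $\partial P$, every cap through $x$ has volume bounded below, so $\PP_P[x\notin\hat K_n]$ is exponentially small; and if $x$ lies within $\delta$ of the relative interior of a proper $k$-dimensional face of $P$ with $k\ge1$, but away from the faces of smaller dimension, then the smallest empty cap at $x$ is a thin wedge along that face, and a direct estimate bounds the corresponding contribution by $O\!\big(n^{-1}(\ln n)^{d-1-k}\big)=o\!\big(n^{-1}(\ln n)^{d-1}\big)$. Since $P$ has finitely many faces, for every fixed small $\delta>0$,
\[
\E_P\!\left[|P\setminus\hat K_n|\right]=\sum_{v\in\mathrm{vert}(P)}\int_{P\cap B(v,\delta)}\PP_P\!\left[x\notin\hat K_n\right]\diff x+o\!\left(\frac{(\ln n)^{d-1}}{n}\right).
\]

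Next I would evaluate the contribution of a single vertex $v$. For $\delta$ small, $P$ coincides inside $B(v,\delta)$ with the tangent cone $C_v$, and the uniform density is there essentially the constant $|P|^{-1}$; I would then argue by induction on $d$. I would cone off a ray $\ell$ from $v$ into the interior of $C_v$, producing subcones that each contain exactly one facet of $C_v$ and share $\ell$. Near $\ell$, every cap through a nearby point cuts off a fixed fraction of $C_v$, so that region is negligible, and iterating this peeling of the artificial faces reduces the computation to neighbourhoods of the genuine $1$-faces of $C_v$. Near such an edge $e$, the cone factors, up to lower-order terms, as $e\times C_e$ with $C_e$ the $(d-1)$-dimensional link cone; a small empty cap must be a thin wedge along $e$, integration along $e$ produces one extra factor $\ln n$ together with the elementary constant $\tfrac{1}{d+1}$, and the transverse integral is exactly the $(d-1)$-dimensional problem for $C_e$. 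With the base case supplied by a simplicial cone — for which the standard orthant together with an explicit multiple integral (equivalently, the classical asymptotics $\E[N_n]\sim\tfrac{d}{(d+1)^{d-2}}(\ln n)^{d-1}$ of a random polytope in a simplex) give the value $\tfrac{1}{(d+1)^{d-1}(d-1)!}$ per complete flag — the induction yields
\[
\int_{P\cap B(v,\delta)}\PP_P\!\left[x\notin\hat K_n\right]\diff x=\big(1+o(1)\big)\,\frac{|P|\,T(C_v)}{(d+1)^{d-1}(d-1)!}\,\frac{(\ln n)^{d-1}}{n},
\]
where $T(C_v)$ is the number of complete flags $\{v\}=F_0\subset F_1\subset\cdots\subset F_{d-1}\subset C_v$.

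Summing over $v$, and using that every complete flag of $P$ has a unique minimal face $F_0=\{v\}$ while the complete flags of $P$ with $F_0=\{v\}$ correspond bijectively to the complete flags of $C_v$, gives $\sum_{v\in\mathrm{vert}(P)}T(C_v)=T(P)$; dividing by $|P|$ yields the statement. The hard part is the vertex-local analysis: besides making the Poissonisation and the localisation error bounds rigorous, one must pin down the exact constant, and a naive fine triangulation of $C_v$ into simplicial cones does not work because it creates artificial faces and over-counts flags. The crux is precisely that those artificial faces contribute only lower-order terms, which is what forces the inductive peeling scheme and the separate elementary computation for the simplicial case.
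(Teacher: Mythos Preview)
The paper under review is a survey and does not prove this theorem; it merely quotes the result from \cite{BaranyBuchta1993} to illustrate that $\hat K_n$ converges at the faster rate $n^{-1}(\ln n)^{d-1}$ when the support is a polytope. There is therefore no proof in the paper to compare your proposal against.

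As for your sketch on its own terms, it follows the broad architecture of B\'ar\'any and Buchta's original argument: localise the integral $\int_P\PP_P[x\notin\hat K_n]\diff x$ to neighbourhoods of the vertices, analyse each tangent cone, and reduce by induction on $d$ along the face structure so that the constant $\tfrac{1}{(d+1)^{d-1}(d-1)!}$ appears once per complete flag. Your consistency check via Efron's identity and the simplex asymptotics $\E[N_n]\sim d(d+1)^{-(d-2)}(\ln n)^{d-1}$ is correct, and your remark that a naive triangulation of $C_v$ over-counts flags through artificial faces identifies the genuine obstacle. That said, the outline remains a plan rather than a proof: the bound $O\big(n^{-1}(\ln n)^{d-1-k}\big)$ for the contribution of points near the relative interior of a $k$-face, the uniform control of the Poissonisation and cone-approximation errors, and above all the inductive ``peeling'' that shows the artificial faces contribute only lower-order terms, are precisely the technical content of \cite{BaranyBuchta1993} and are not supplied here. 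Your proposal correctly locates where the work lies without yet doing it.
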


In particular, if $G$ is a polytope, then there is a significant gain in the speed of convergence of $\hat K_n$, which becomes nearly parametric up to logarithmic factors. In other words, $\hat K_n$ adapts to polytopal supports. However, its rate still suffers the curse of dimensionality because of the $(\ln n)^{d-1}$ factor. In \cite{Brunel2016}, it is shown that this rate is not optimal over subclasses of polytopes with given number of vertices in a minimax sense. The idea is that $\hat K_n$ maximizes the likelihood function over the class of all convex bodies, which would too rich if it was known in advance that $G$ is a polytope with a given number of vertices. If $G$ has at most $r$ vertices, where $r\geq d+1$ is known \textit{a priori}, \cite{Brunel2016} considers the maximum likelihood estimator over the corresponding subclass of polytopes. Namely, denote by $\mathcal P_r$ the class of all polytopes with at most $r$ vertices. The maximum likelihood estimator of $P$ in the class $\mathcal P_r$ is defined as $\DS \hat P_n^{(r)}\in\underset{Q\in\mathcal P_r}{\operatorname{argmax}}\mbox{ } |P|^{-n}\mathds 1_{X_i\in P, \forall i=1,\ldots,n}$: It is a polytope with at most $r$ vertices that contains $X_1,\ldots,X_n$ and has minimum volume. Note that, unlike $\hat K_n$, the maximum likelihood estimator $\hat P_n^{(r)}$ may not be uniquely defined. However, the rate of this estimator no longer suffers the curse of dimensionality when $G\in\mathcal P_r$.

\begin{theorem}[\cite{Brunel2016}] \label{ThmPTRF16}
Let $r\geq d+1$. Then,
$$\frac{1}{n}\lesssim_d\inf_{\tilde G_n} \sup_{P\in\mathcal P_n^{(r)}}\E_P\left[\frac{|P\triangle\tilde G_n|}{|P|}\right]\leq \sup_{P\in\mathcal P_n^{(r)}}\E_P\left[\frac{|P\triangle\hat P_n^{(r)}|}{|P|}\right]\lesssim_d \frac{r\ln n}{n},$$
where the infimum is taken over all estimators $\tilde G_n$ based on $n$ i.i.d. observations.

\end{theorem}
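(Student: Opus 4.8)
The plan is to prove the three inequalities separately: the lower bound $\tfrac1n \lesssim_d \inf_{\tilde G_n}\sup_P \E_P[\cdots]$, and the upper bound $\sup_P \E_P[|P\triangle \hat P_n^{(r)}|/|P|]\lesssim_d \tfrac{r\ln n}{n}$; the middle inequality is trivial since $\hat P_n^{(r)}$ is one particular estimator.

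For the lower bound, I would use a standard two-point (or Assouad-type) argument from minimax theory. Fix a reference polytope $P_0\in\mathcal P_{d+1}$ of volume one, say the regular simplex; perturb it near one facet by shaving off or adding a small simplicial cap of volume $\delta\asymp 1/n$, keeping the vertex count at most $r$ (which is fine since $r\ge d+1$). The two uniform densities $f_{P_0}$, $f_{P_1}$ then differ in Nikodym distance by $\asymp \delta$, while their Hellinger affinity (or total variation) between the $n$-fold products stays bounded away from $1$ precisely when $n\delta \asymp 1$. Le Cam's two-point lemma then yields $\inf_{\tilde G_n}\sup\E_P[|P\triangle\tilde G_n|/|P|]\gtrsim_d \delta \asymp 1/n$. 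The constants depend on $d$ through the geometry of the simplex and its caps, hence the $\lesssim_d$. (One could instead embed $\asymp r$ independent small caps to get a matching $r/n$ lower bound, but the weaker $1/n$ bound stated in the theorem only needs one cap.)

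For the upper bound, the key structural fact is that $\hat P_n^{(r)}$ is the minimum-volume polytope with at most $r$ vertices containing all $X_i$, and $\hat K_n\subseteq \hat P_n^{(r)}\subseteq P$ almost surely, so $|P\triangle\hat P_n^{(r)}| = |P\setminus\hat P_n^{(r)}| \le |P\setminus \hat K_n|$ pointwise — but that only recovers the $(\ln n)^{d-1}$ rate, so this is \emph{not} enough. Instead I would argue directly: the event $\{x\notin \hat P_n^{(r)}\}$ for a point $x\in P$ is controlled by the geometry of halfspaces separating $x$ from the sample. The right tool is a covering/peeling argument over the class $\mathcal P_r$: since a polytope with $r$ vertices is described by $O(rd)$ real parameters, the class $\mathcal P_r$ of polytopes inside $P$ has metric entropy (in Nikodym distance) of order $r\log(1/\varepsilon)$ up to $d$-dependent constants. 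Combining this entropy bound with the basic observation that $\hat P_n^{(r)}$ contains every sample point — so for any candidate $Q\in\mathcal P_r$ with $|P\setminus Q|$ large, the probability that $Q\supseteq\{X_1,\dots,X_n\}$ is at most $(1-|P\setminus Q|/|P|)^n$ — and taking a union bound over an $\varepsilon$-net gives, for $t\gtrsim_d r\ln n / n$,
$$
\PP_P\!\left(\tfrac{|P\setminus\hat P_n^{(r)}|}{|P|} > t\right) \;\lesssim_d\; \exp\!\big(c\,r\ln(1/t) - nt\big),
$$
and integrating this tail yields $\E_P[|P\triangle\hat P_n^{(r)}|/|P|]\lesssim_d r\ln n / n$ uniformly over $P\in\mathcal P_r$.

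The main obstacle is making the covering argument for the upper bound clean and genuinely uniform in $P$. The naive union bound over an $\varepsilon$-net of \emph{all} of $\mathcal P_r$ (independent of the truth $P$) does give the claimed rate, but one has to be careful that (i) the net can be taken so that every $Q$ appearing — in particular a near-minimizer close to $\hat P_n^{(r)}$ — is within $\varepsilon\le t/2$ of a net point, and (ii) the entropy constant absorbs the affine-invariance issue, i.e. one reduces to $P$ in isotropic position or simply bounds the entropy of $\mathcal P_r$ restricted to polytopes sandwiched between the truth and a bounded dilate. Handling the measurability of $\hat P_n^{(r)}$ (which may be non-unique) is dispatched by the blanket outer-expectation convention stated in the preliminaries. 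The lower bound, by contrast, is routine once the single-cap perturbation is set up.
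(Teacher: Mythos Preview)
Your overall strategy matches the paper's: the lower bound via a two-point Le Cam argument, and the upper bound via a finite discretization of $\mathcal P_r$ (the paper uses polytopes with vertices on a grid in $[0,1]^d$, giving log-cardinality of order $r\ln(1/\varepsilon)$ up to $d$-constants) together with the elementary bound $\PP_P(\{X_1,\dots,X_n\}\subseteq Q)\le (1-|P\setminus Q|/|P|)^n$ and a union bound.

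Two points need fixing. First, the inclusion $\hat P_n^{(r)}\subseteq P$ is false in general: $\hat P_n^{(r)}$ is only a minimum-volume element of $\mathcal P_r$ containing the sample, with no constraint to lie inside $P$. Consequently your tail bound, which is for $|P\setminus\hat P_n^{(r)}|$, does not directly control $|P\triangle\hat P_n^{(r)}|$. The patch is that $P$ itself is a competitor in the minimization, so $|\hat P_n^{(r)}|\le |P|$, and hence $|P\triangle\hat P_n^{(r)}|\le 2|P\setminus\hat P_n^{(r)}|$; with this your integration step goes through. Second, in the union-bound step it is not enough that the net be $\varepsilon$-close in Nikodym distance: you need a \emph{bracketing}-type net so that any sample-containing $Q\in\mathcal P_r$ is \emph{contained} in a net polytope $Q^*$ with $|Q^*\setminus Q|\le\varepsilon$, in order that $\{X_i\}\subseteq Q$ implies $\{X_i\}\subseteq Q^*$. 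The grid construction does this (round each vertex of $Q$ outward to the grid), but your discussion under ``main obstacle'' only asks for Nikodym proximity, which would not transfer sample containment.
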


In \cite{Brunel2016}, a better lower bound is proven when $d=2$, namely, $$\inf_{\tilde G_n} \sup_{P\in\mathcal P_n^{(r)}}\E_P\left[\frac{|P\triangle\tilde G_n|}{|P|}\right]\gtrsim \frac{r}{n}.$$

The proof of the upper bound in Theorem \ref{ThmPTRF16} builds on a simple discretization of the class $\mathcal P_n^{(r)}$, obtained by considering polytopes with vertices on a finite grid in $[0,1]^d$, and applying similar methods to those presented in Section \ref{Sec:CovNum}.

The estimator $\hat P_n^{(r)}$ is not computable in practice, but it gives a benchmark for the optimal rate in estimation of $P\in\mathcal P_r$, under the Nikodym metric. It is still not clear whether the logarithmic factor could be dropped in the upper bound (see \cite[Section 3.2]{Brunel2016}). A drawback of $\hat P_n^{(r)}$ is that it requires the knowledge of $r$, whereas $\hat K_n$ is completely agnostic to the facial structure of $G$. In order to fix this issue, \cite{Brunel2016} proposes a fully adaptive procedure and defines an estimator $\hat P_n^{\textsf{adapt}}$ that is agnostic of the facial structure of $G$ and yet performs at the same rate as $\hat P_n^{(r)}$ when $G\in\mathcal P_r$ for some integer $r\geq d+1$, and as $\hat K_n$ for general supports $G$ (see \cite{Brunel2016} and \cite{BrunelThesis} for more details). However, the estimators $\hat P_n^{(r)}$ and $\hat P_n^{\textsf{adapt}}$ are not computationally tractable, and when the dimension $d$ is not too large, the convex hull $\hat K_n$ is a more realistic estimator of $G$.

\subsection{More results on Random Polytopes}

Even though this survey focuses on the statistical aspects of random polytopes, it is worth mentioning many works that have tackled other probabilistic and geometric properties, which are indirectly related to the statistical estimation of the support and pose new statistical challenges.

In \cite{RenyiSulanke1963,RenyiSulanke1964}, the expected number of vertices of $\hat K_n$ is computed in the planar case, up to some negligible terms as $n\to\infty$. \cite{Efron1965} shows a very elegant identity which relates the missing volume of $\hat K_n$ and its number of vertices. It can be stated in a very general setup as follows. Given a sequence of i.i.d. random points $X_1,X_2,\ldots$ from some arbitrary probability measure $\mu$ in $\R^d$, let $\hat K_n$ be the convex hull of $X_1,\ldots,X_n$ and $N_{n}$ be the number of vertices of $\hat K_n$, for $n\geq 1$. Then, for all $n\geq 1$, 
$$\E\left[1-\mu(\hat K_n)\right]=\frac{\E[N_{n+1}]}{n+1}.$$
When $\mu$ is the uniform probability measure on a convex body $G\in\K$, this identity becomes $\DS \E_G\left[\frac{|G\setminus\hat K_n|}{|G|}\right]=\frac{\E_G[N_{n+1}]}{n+1}$. Extensions of this inequality to higher moments of $|G\setminus\hat K_n|$ can be found in \cite{buchta2005identity}.

In \cite{Reitzner2003}, more results about the random polytope $\hat K_n$, involving variance bounds, are proven using Efron--Stein jackknife inequalities \cite{efron1981jackknife}. Very importantly, \cite{Reitzner2003} compares the random polytope $\hat K_n$ to best polytopal approximations of smooth convex bodies. Let $G\in\K$ be a smooth convex body and let $G_N^*$ be a polytope with at most $N$ vertices, included in $G$, with minimum missing volume $|G\setminus G_N^*|$. With probability one, 
$$\frac{|G\setminus G_{N_n}^*|}{|G\setminus \hat K_n|}\longrightarrow c_d, \hspace{4mm} n\to\infty,$$
where $c_d\leq 1$ is a positive constant that only depends on the dimension $d$. Moreover, \cite{Reitzner2003} shows that $c_d\longrightarrow 1$ as $d\to\infty$. This shows that in high dimensions, with probability $1$, $\hat K_n$ performs nearly as well as the best approximating inscribed polytope with same number of vertices, as $n$ becomes large.

Central limit theorems for the volume, number of vertices, or, more generally, number of $k$-dimensional faces for $k\leq d-1$, of random polytopes are proven in \cite{reitzner2005central,Pardon2011,Pardon2012}. A worth mentioning technique that is used in the proofs of these central limit theorems could be called \textit{Poissonization-depoissonization}. The idea is to first consider a Poisson polytope, defined as the convex hull of a Poisson point process \cite{barany2010poisson} supported on a convex body, with growing intensity.  These are somewhat easier to work with, and it is shown that their behavior is close enough to that of the random polytope $\hat K_n$. Hence, the central limit theorems are first proven for the Poisson polytope, and the results are transferred to the random polytope by a \textit{depoissonization step}. At a high level, this idea relies on the fact that if $G\in\K$ has volume one and if $\mathcal X=\{X_1,\ldots,X_N\}$ is a Poisson point process with constant intensity $n$ supported on $G$, then $N$ is a Poisson random variable with parameter $n$, hence, $\E[N]=\textsf{Var}(N)=n$ and $N\approx n$ with high probability, and conditional on $N=n$, $X_1,\ldots,X_N$ are $n$ i.i.d. random points uniformly distributed in $G$.

Asymptotic properties of the intrinsic volumes of the random polytope are studied in \cite{Barany1992,Reitzner2004,BoroczkyHoffmannHug2008} under different assumptions on the boundary of the underlying convex body. The intrinsic volumes of a convex body can be defined through Steiner formula \cite[Section 4.1]{Schneider1993}. For $G\in\K$ and $\varepsilon>0$, let $G^{\varepsilon}=G+\varepsilon B(0,1)$ be the set of all points $x\in\R^d$ that are within a distance at most $\varepsilon$ of $G$. Steiner formula states that $|G^{\varepsilon}|$ is a degree $d$ polynomial in $\varepsilon$. Namely, one can write, for all $\varepsilon>0$, 
\begin{equation} \label{Steiner}
	|G^{\varepsilon}|=\sum_{j=0}^d \beta_{d-j}v_j(G)\varepsilon^j,
\end{equation}
where $v_j(G)\geq 0$ is called the $j$-th intrinsic volume of $G$, for $j=0,\ldots,d$. For instance, $v_0(G)=|G|$ is the volume of $G$, $v_1(G)$ is its surface area, $v_2(G)$ is its mean width and $v_d(G)=1$. In \cite{BoroczkyHoffmannHug2008}, it is shown that if $G$ is a smooth convex body satisfying the $r$-rolling ball condition, then for all $j=0,\ldots,d-1$, $\DS n^{2/(d+1)}\E_G[v_j(G)-v_j(\hat K_n)]\longrightarrow c(d,G)$ as $n\to\infty$, where $c(d,G)$ is a positive constant that depends on both the dimension and $G$. In particular, the plug-in estimator $v_j(\hat K_n)$ is a consistent estimator of $v_j(G)$, and it converges at the same rate as the rate of convergence of $\hat K_n$ in the Nikodym metric. Whether the plug-in estimator $v_j(\hat K_n)$ is an optimal estimator of $v_j(G)$ in a minimax sense is not known in general, except when $j=0$, when the answer is negative. \cite{Gayraud1997} considers the general problem of minimax estimation of the volume of the support of an unknown density, not necessarily uniform. In the particular case of the uniform density on an unknown convex body $G\in\K$, a sample splitting procedure is applied in order to correct the plug-in estimator $|\hat K_{n/2}|$. It is shown that the minimax risk for the estimation of the volume of $G\in\KK$ is of order $\DS n^{-\frac{d+3}{2d+2}}$, and this rate of convergence is attained by the explicit estimator given in \cite{Gayraud1997}. The estimation of the volume of $G$ is also tackled in \cite{BaldinReiss2016}, where the same Poissonization-depoissonization procedure as mentioned above is used in order to obtain an estimator of $|G|$ based on a dilation of the random polytope $\hat K_n$.

\subsection{Convex bodies and covering numbers} \label{Sec:CovNum}

Covering numbers provide a powerful tool to describe the complexity of a class. In empirical process theory, they are often used in order to bound the statistical performance of an estimator in expectation or with high probability, when the estimator is obtained by optimizing a criterion, such as the likelihood function.


Consider the problem of estimating the support $G$ of a uniform distribution, with $G\in\K$. Because the support of the likelihood function (see Section \ref{Adapt}) depends on the unknown parameter itself, it is not valid to take its logarithm and it cannot be approached through the lens of empirical process theory. However, tools such as covering numbers can still be borrowed from that theory in order to prove deviation inequalities for $\hat K_n$.

Without loss of generality, one can assume that $B(a,d^{-1})\subseteq G\subseteq B(0,1)$ for some $a\in B(0,1)$. This guarantees that $G\in\KK$, which is a bounded class of convex bodies, and that $|G|$ is uniformly bounded from below. This is due to John's theorem (e.g., see \cite{ball1992ellipsoids}) and affine equivariance of $\hat K_n$. John's theorem (e.g., see \cite{ball1992ellipsoids}) implies the existence an invertible affine transformation $T:\R^d\to\R^d$ and a point $a\in B(0,1)$ with $B(a,d^{-1})\subseteq TG\subseteq B(0,1)$. Moreover, if we rather denote by $\hat K_n(X_1,\ldots,X_n)$ the convex hull of $X_1,\ldots,X_n$, then, $\hat K_n(X_1,\ldots,X_n)=T^{-1}\hat K_n(TX_1,\ldots,TX_n)$. Since $X_1,\ldots,X_n$ are i.i.d. uniform random points in $G$, $TX_1,\ldots,TX_n$ are i.i.d. uniform random points in $TG$, and $\DS \frac{|G\setminus\hat K_n(X_1,\ldots,X_n)|}{|G|}=\frac{|TG\setminus\hat K_n(TX_1,\ldots,TX_n)|}{|TG|}$. As a consequence, the rescaled risk $\DS \frac{|G\setminus\hat K_n|}{|G|}$ is bounded from above by $\DS \frac{|G\setminus\hat K_n|}{\beta_d}$ and we only need to bound $|G\setminus\hat K_n|$ uniformly on $\KK$ instead of the whole unbounded class $\K$.

Let $\varepsilon>0$ and let $\textsf{d}(\cdot,\cdot)$ be a metric on $\KK$ (e.g., Nikodym or Hausdorff distance). An $\varepsilon$-net of $\KK$ with respect to the metric $\textsf{d}(\cdot,\cdot)$ is a set $\mathcal N\subseteq \KK$ such that for all $G\in\KK$, there is $G^*\in\mathcal N$ with $\textsf{d}(G,G^*)\leq\varepsilon$. The $\varepsilon$-covering number of $\KK$ with respect to $\textsf{d}(\cdot,\cdot)$ is the minimum cardinality of an $\varepsilon$-net of $\KK$. The following theorem is an upper bound for the $\varepsilon$-covering number of $\KK$ with respect to the Hausdorff distance. By \cite[Lemma 2]{Brunel2017}, the Nikodym distance is dominated by the Hausdorff distance uniformly on $\KK$: $\textsf{d}_\triangle(G_1,G_2)\leq\alpha \textsf{d}_{\textsf{H}}(G_1,G_2)$, for all $G_1,G_2\in\KK$, where $\alpha$ is a positive constant that depends on $d$ only. This result is a direct consequence of Steiner formula for convex bodies (see Lemma \ref{Steiner}). Hence, the following theorem also implies an upper bound for the $\varepsilon$-covering number of $\KK$ with respect to the Nikodym metric.

\begin{theorem}[\cite{Bronshtein1976}] \label{ThmBro}

Let $\varepsilon\in (0,1)$. The $\varepsilon$-covering number of $\KK$ with respect to the Hausdorff distance is at most $c_1e^{c_2\varepsilon^{-(d-1)/2}}$, for some positive constants $c_1$ and $c_2$ that depend on $d$.

\end{theorem}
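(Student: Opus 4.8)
To bound the $\varepsilon$-covering number of $\KK$ in the Hausdorff metric, I would use the classical approach of discretizing the support function. Recall that by Lemma \ref{HausdorffSuppFunc}, the Hausdorff distance between two convex bodies equals the sup-norm distance between their support functions restricted to $\Sp$, so it suffices to build a net for the class of support functions $\{h_G : G\in\KK\}\subseteq C(\Sp)$ in the uniform norm. The key structural facts are: (i) each $h_G$ satisfies $|h_G(u)|\le 1$ for all $u\in\Sp$ since $G\subseteq B(0,1)$; and (ii) each $h_G$ is Lipschitz on $\Sp$ with a Lipschitz constant bounded by a constant depending only on $d$ (indeed $|h_G(u)-h_G(v)|\le \|u-v\|\cdot\sup_{x\in G}\|x\|\le\|u-v\|$). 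So the class of support functions is a uniformly bounded, uniformly Lipschitz family of functions on the compact $(d-1)$-dimensional manifold $\Sp$.

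\textbf{Main steps.} First, I would cover $\Sp$ itself by $N_0\lesssim_d \delta^{-(d-1)}$ points $u_1,\dots,u_{N_0}$ that form a $\delta$-net of $\Sp$, for a parameter $\delta$ to be optimized against $\varepsilon$. Second, since each $h_G$ lies in $[-1,1]$, I discretize the range: partition $[-1,1]$ into intervals of length $\eta$, giving $\lceil 2/\eta\rceil$ values. To each $G$ I associate the vector $\big(\lfloor h_G(u_i)/\eta\rfloor\big)_{i=1}^{N_0}$; two bodies with the same associated vector have support functions differing by at most $\eta$ on the net points, hence by at most $\eta + C_d\delta$ everywhere on $\Sp$ (using the Lipschitz bound to interpolate between net points). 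Choosing $\delta\asymp\varepsilon$ and $\eta\asymp\varepsilon$ makes this total at most $\varepsilon$ in sup-norm, i.e.\ the bodies are within $\varepsilon$ in Hausdorff distance. The number of distinct vectors is at most $(\lceil 2/\eta\rceil)^{N_0} \le (C/\varepsilon)^{C_d \varepsilon^{-(d-1)}}$, and then one picks one representative body per nonempty cell (possibly re-centering slightly to ensure membership in $\KK$). Taking logarithms, $\ln \#\mathcal N \lesssim_d \varepsilon^{-(d-1)}\ln(1/\varepsilon)$.

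\textbf{The refinement.} The naive argument above only gives the exponent $\varepsilon^{-(d-1)}$ (up to a log factor), not the claimed $\varepsilon^{-(d-1)/2}$. To get the sharp exponent one must exploit \emph{convexity} more heavily: the support function of a convex body in $B(0,1)$ is not merely Lipschitz but has, in a suitable sense, bounded second-order behavior (it is the restriction of a sublinear, hence $C^{1,1}$-type, function; equivalently $h_G$ admits a modulus of approximation by its values on a $\delta$-net of order $\delta^2$ rather than $\delta$, because a convex body is squeezed between its inscribed polytope on the net directions and the circumscribed one, and the gap is quadratic in the mesh). Concretely, one shows that specifying $h_G$ at a $\delta$-net of $\Sp$ determines $h_G$ everywhere up to an error $O(\delta^2)$; then choosing $\delta\asymp\varepsilon^{1/2}$ gives $N_0\asymp\varepsilon^{-(d-1)/2}$ net directions, and one still needs only $O(1)$ (or $O(\log(1/\varepsilon))$, which can be absorbed) bits per direction because the relevant increments of $h_G$ between adjacent net points are themselves of size $O(\delta)=O(\varepsilon^{1/2})$ so only $O(1)$ quantization levels per increment are needed after encoding differences rather than absolute values. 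This yields $\ln\#\mathcal N \lesssim_d \varepsilon^{-(d-1)/2}$, matching the statement.

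\textbf{The main obstacle.} The delicate point is establishing the quadratic (rather than linear) approximation of a convex body's support function from finitely many directions, and—crucially—doing the encoding so that only $O(1)$ quantization levels are used per net direction once differences are encoded, since a linear number of levels per direction would reintroduce a $\ln(1/\varepsilon)$ factor and, combined with $\varepsilon^{-(d-1)/2}$ directions, would still be fine, but a linear number of levels combined with $\varepsilon^{-(d-1)}$ directions would not. The cleanest route, following Bronshtein, is geometric: approximate $G$ from outside by the intersection of the $N_0$ supporting halfspaces at the net directions (whose Hausdorff error from $G$ is $O(\delta^2)$ by a curvature/second-difference estimate), then further quantize each of the $N_0$ halfspace offsets on a grid of spacing $\asymp\varepsilon$ — but the nontrivial bookkeeping is that the offsets at adjacent directions cannot be arbitrary: their differences are constrained by convexity to lie in an interval of length $O(\delta)$, so a counting argument organized as a path through the net (or a spanning tree of the net graph) shows the number of admissible quantized offset-vectors is $2^{O(N_0)}\cdot (O(1))^{N_0}=e^{O(N_0)}=e^{c_2\varepsilon^{-(d-1)/2}}$. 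Carrying out this constrained count rigorously, and verifying the $O(\delta^2)$ outer-approximation bound, is the technical heart of the proof.
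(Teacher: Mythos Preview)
The paper does not supply its own proof of this result; it is quoted from \cite{Bronshtein1976} (with a pointer to \cite{Dudley2014} for background) and used as a black box. So there is no paper-proof to compare against, and the question is simply whether your sketch stands on its own.

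Your naive $\varepsilon^{-(d-1)}$ argument is fine. The refinement, however, has two genuine gaps. First, the claim that the circumscribed polytope of $G$ on a \emph{fixed} $\delta$-net of directions is within $O(\delta^2)$ of $G$ in Hausdorff distance is false for general $G\in\KK$. Take a thin rectangle $G=[-a,a]\times[-\eta,\eta]\subset\R^2$ with $\eta\ll\delta$: the supporting halfspaces at angles $\pm\delta$ from the direction $u_0=(0,1)$ only force $x_2\lesssim a\delta+\eta$, so the circumscribed polytope $P$ has $h_P(u_0)\asymp\delta$ while $h_G(u_0)=\eta$, an error of order $\delta$, not $\delta^2$. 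Your justification that $h_G$ is ``$C^{1,1}$-type'' because it is sublinear is incorrect: the support function of any polytope is sublinear yet only piecewise linear, not $C^1$. The $O(\delta^2)$ bound holds for the Euclidean ball and for uniformly curved bodies, but not uniformly over $\KK$; indeed the paper's own Lemma~\ref{ThmTDLSdiscrete} gives only an $O(\delta)$ contribution from the net mesh. Bronshtein's approximation step is different: every $G\in\KK$ is within $\varepsilon$ of a polytope with $O(\varepsilon^{-(d-1)/2})$ vertices \emph{chosen adaptively to $G$}, not drawn from a fixed net of directions, and the problem is then reduced to the metric entropy of bounded Lipschitz convex functions on a $(d-1)$-dimensional cube.

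Second, even granting an $O(\delta^2)$ step, your counting does not give $e^{O(N_0)}$. You need the offsets $h_G(u_i)$ to precision $\varepsilon\asymp\delta^2$; the Lipschitz bound confines adjacent differences to an interval of length $O(\delta)$, which contains $\delta/\varepsilon\asymp\varepsilon^{-1/2}$ quantization levels, not $O(1)$. Encoding along a spanning tree then produces $(\varepsilon^{-1/2})^{N_0}$ vectors, i.e., log-entropy $\asymp\varepsilon^{-(d-1)/2}\ln(1/\varepsilon)$, with an extraneous logarithm. Removing that logarithm is exactly the delicate part of Bronshtein's argument: it exploits not first differences of $h_G$ but the \emph{monotonicity of the subgradient} of a convex function (equivalently, nonnegative second differences with bounded total sum), which converts the count into a stars-and-bars bound of size $2^{O(N_0)}$. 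Your sketch does not contain this mechanism.
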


We also refer to Section 8.4 in \cite{Dudley2014} for more details on metric entropy for classes of convex sets. Building on this theorem combined with standard techniques from M-estimation and empirical processes, (see, e.g., \cite{Vandervaart,Vandegeer}), \cite{Brunel2017} proves the following deviation inequality for $\hat K_n$, which holds uniformly for all $G\in\K$.

\begin{theorem}[\cite{Brunel2017}]

There exist positive constants $a_1, a_2$ and $a_3$ such that the following holds. Let $x\geq 0$ and $n\geq 2$ be an integer. For all $G\in\K$,
$$\frac{|G\setminus\hat K_n|}{|G|}\leq a_1 n^{-\frac{2}{d+1}}+\frac{x}{n}$$
with $\PP_G$-probability at least $1-a_2e^{-a_3x}$.

\end{theorem}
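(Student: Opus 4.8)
The plan is to discretize the class $\KK$ by a Bronshtein net (Theorem~\ref{ThmBro}), replace the random polytope $\hat K_n$ by its nearest net point at the cost of a Steiner--formula error term, and then run a union bound over the net against the elementary probability that a fixed large cap of $G$ receives no sample point. The exponent $2/(d+1)$ will not enter through any intrinsic property of $\hat K_n$; it will drop out of balancing the cardinality estimate of Theorem~\ref{ThmBro} against that emptiness probability.

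\emph{Step 1: reduction.} As explained in Section~\ref{Sec:CovNum}, by John's theorem together with the affine equivariance of $\hat K_n$ and the affine invariance of $|G\setminus\hat K_n|/|G|$, I may assume $B(a,d^{-1})\subseteq G\subseteq B(0,1)$, so that $\beta_d d^{-d}\le|G|\le\beta_d$. Setting $s:=\beta_d d^{-d}\,(a_1 n^{-2/(d+1)}+x/n)$, and noting that $|G\setminus\hat K_n|/|G|>a_1 n^{-2/(d+1)}+x/n$ forces $|G\setminus\hat K_n|>s$, it suffices to prove $\PP_G\bigl(|G\setminus\hat K_n|>s\bigr)\le a_2e^{-a_3x}$.

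\emph{Step 2: discretization and union bound.} Let $\delta\in(0,1)$ be a parameter and let $\mathcal N$ be a minimal $\delta$-net of $\KK$ for the Hausdorff metric, so $\#\mathcal N\le c_1 e^{c_2\delta^{-(d-1)/2}}$ by Theorem~\ref{ThmBro}. Almost surely $\hat K_n$ is a convex body in $B(0,1)$, so there is $K^\ast\in\mathcal N$ with $\textsf{d}_{\textsf{H}}(\hat K_n,K^\ast)\le\delta$; put $A:=K^\ast+\delta B(0,1)$. Then $\hat K_n\subseteq A$, hence $X_1,\dots,X_n\in A\cap G$; moreover $A\subseteq\hat K_n+2\delta B(0,1)$, and Steiner's formula~\eqref{Steiner} applied to $\hat K_n\subseteq B(0,1)$ (whose intrinsic volumes are at most those of the unit ball) yields $|A\setminus\hat K_n|\le c(d)\delta$. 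Consequently, on $\{|G\setminus\hat K_n|>s\}$ one has $|G\setminus A|\ge|G\setminus\hat K_n|-|A\setminus\hat K_n|>s-c(d)\delta$, so that $K^\ast$ lies in $\mathcal N':=\{K\in\mathcal N:\ |G\setminus(K+\delta B(0,1))|>s-c(d)\delta\}$. For a fixed $K\in\mathcal N'$, since the $X_i$ are i.i.d.\ uniform on $G$ and $|G|\le\beta_d$,
\[
\PP_G\bigl(X_1,\dots,X_n\in K+\delta B(0,1)\bigr)=\Bigl(1-\tfrac{|G\setminus(K+\delta B(0,1))|}{|G|}\Bigr)^{\!n}\le e^{-n(s-c(d)\delta)/\beta_d},
\]
and a union bound over $\mathcal N'$ gives
\[
\PP_G\bigl(|G\setminus\hat K_n|>s\bigr)\le c_1\exp\!\Bigl(c_2\delta^{-(d-1)/2}-\tfrac{n}{\beta_d}\,(s-c(d)\delta)\Bigr).
\]

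\emph{Step 3: tuning, and the main difficulty.} It remains to choose $\delta$. I take $\delta$ proportional to $s$, small enough that $c(d)\delta\le s/2$ and simultaneously large enough that $c_2\delta^{-(d-1)/2}\le ns/(4\beta_d)$; both requirements can be met precisely when $ns^{(d+1)/2}\ge C(d)$, i.e.\ $s\ge C'(d)\,n^{-2/(d+1)}$. Choosing the dimensional constant $a_1$ large enough that $s=\beta_d d^{-d}(a_1 n^{-2/(d+1)}+x/n)\ge C'(d)\,n^{-2/(d+1)}$ for every $x\ge0$, the exponent above is at most $-ns/(4\beta_d)\le -d^{-d}x/4$, which proves the theorem with $a_2=c_1$ and $a_3=d^{-d}/4$ (constants depending on $d$); the finitely many $n$ with $2\le n<n_0(d)$, for which the required $\delta$ would exceed $1$, are handled by enlarging $a_1$ so that $a_1 n^{-2/(d+1)}\ge1$ there. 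The crux is exactly this simultaneous tuning: $\delta$ must be small enough that the discretization costs at most half of the budget $s$ in missing volume, yet large enough that the net cardinality $e^{c_2\delta^{-(d-1)/2}}$ is dominated by the emptiness probability $e^{-ns/\beta_d}$, and these two constraints are compatible on exactly the range $s\gtrsim_d n^{-2/(d+1)}$ — this is what produces the stated rate. The only other point needing care is the Steiner bound $|A\setminus\hat K_n|\le c(d)\delta$, which is where the \emph{a priori} confinement $\hat K_n\subseteq B(0,1)$ from Step~1 is used so that all intrinsic volumes of $\hat K_n$ are uniformly bounded in terms of $d$.
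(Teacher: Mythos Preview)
Your proof is correct and follows essentially the approach the paper alludes to: reduce to $\KK$ via John's theorem and affine invariance, cover $\KK$ by a Bronshtein Hausdorff net, convert the Hausdorff error into a Nikodym error using Steiner's formula, and run a union bound over the net against the probability that a fixed set of prescribed missing volume contains all sample points; balancing net cardinality against this probability yields the $n^{-2/(d+1)}$ rate. The minor points you flag at the end (small $n$, ensuring $\delta<1$, $\hat K_n$ having nonempty interior so as to lie in $\KK$) are all legitimately absorbed by enlarging $a_1$, and the trivial case $a_1 n^{-2/(d+1)}+x/n\ge1$ (large $x$) should also be invoked explicitly so that $\delta<1$ is guaranteed throughout.
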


Using the same techniques, more general deviation inequalities are proven in \cite{Brunel2017}, when the density of the $X_i$'s is not uniform, but only supported on a convex body $G$. For all measurable sets $G,G'\subseteq \R^d$ and all densities $f$ on $\R^d$, denote by $\textsf{d}_f(G,G')=\int_{G\setminus G'}f(x)\diff x$. Note that $\DS \textsf{d}_f(G,G')=\frac{|G\setminus G'|}{|G|}$ when $f$ is the uniform density on $G$.

\begin{theorem}[\cite{Brunel2017}]
	There exist positive constants $C_1$ and $C_2$, that depend on $d$ only, such that the following holds. Let $x\geq 0$ and $n\geq 2$ be an integer. Let $G\in\KK$ and $f$ be a density supported in $G$, with $f\leq M$ almost everywhere, for some positive number $M$. Let $X_1,\ldots,X_n$ be i.i.d. random points with density $f$ and $\hat K_n$ be their convex hull. Then, 
\begin{equation*}
	\textsf{d}_f(G,\hat K_n)\leq C_1(M+1)n^{-2/(d+1)}+\frac{x}{n}
\end{equation*}
with probability at least $1-C_2e^{-x}$. 
\end{theorem}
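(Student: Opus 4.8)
The plan is to reduce the non-uniform case to the uniform deviation bound for $\hat K_n$ that was just stated, by dominating the quantity $\textsf{d}_f(G,\hat K_n)=\int_{G\setminus\hat K_n}f$ in terms of $|G\setminus\hat K_n|$ whenever possible, and by controlling the part of $G$ where this crude bound is not informative separately. Since $f\le M$ a.e.\ and $|G|\ge\beta_d$ for $G\in\KK$, one has the trivial inequality $\textsf{d}_f(G,\hat K_n)=\int_{G\setminus\hat K_n}f(x)\diff x\le M\,|G\setminus\hat K_n|\le (M/\beta_d)\,|G\setminus\hat K_n|$, so the target inequality would follow immediately from the uniform bound $|G\setminus\hat K_n|\lesssim_d n^{-2/(d+1)}+x/n$ with the advertised probability, with $C_1$ absorbing the factor $M/\beta_d$ (this already gives the $(M+1)$, indeed even an $M$, dependence). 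The only subtlety is that the earlier deviation theorem is stated with the \emph{rescaled} risk $|G\setminus\hat K_n|/|G|$, not $|G\setminus\hat K_n|$ itself; but on $\KK$ these differ only by the bounded factor $|G|\in[\beta_d,\beta_d]\cdot(\text{something}\le 1)$, so no real work is needed there.

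The key steps, in order, are as follows. First, invoke the affine-reduction argument from Section~\ref{Sec:CovNum}: it suffices to treat $G\in\KK$ with $B(a,d^{-1})\subseteq G$, so that $\beta_d\,d^{-d}\le |G|\le\beta_d$; note however that the hypothesis here already fixes $G\in\KK$, and one must check that the reduction is compatible with a \emph{fixed} (non-uniform) density $f\le M$, using that an affine map $T$ with $|\det T|\le 1$ sends a density bounded by $M$ to a density bounded by $M$ as well, while the functional $\textsf{d}_f$ is affine-invariant in the appropriate sense. Second, apply the chain $\textsf{d}_f(G,\hat K_n)\le M|G\setminus\hat K_n|$. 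Third, apply the uniform deviation inequality for $|G\setminus\hat K_n|/|G|$ (the theorem immediately preceding this one), combined with the two-sided bound on $|G|$, to get $|G\setminus\hat K_n|\le a_1'\,n^{-2/(d+1)}+x'/n$ on an event of probability at least $1-a_2e^{-a_3x'}$; choosing $x'=x/a_3$ (or rescaling constants) produces the clean form $1-C_2e^{-x}$. Fourth, collect constants: set $C_1=a_1' M/\beta_d\cdot(\text{const}_d)$, which is $\le C_1(M+1)$, and $C_2=a_2$, both depending on $d$ only.

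The main obstacle is essentially bookkeeping rather than a genuine mathematical difficulty: one must make sure the affine reduction used for the uniform case still goes through when the target quantity involves an arbitrary density rather than the uniform one, i.e.\ that passing from $f$ on $G$ to $f\circ T^{-1}\cdot|\det T^{-1}|$ on $TG$ preserves both the support-in-$B(0,1)$ normalization and the sup-norm bound (here the constraint $|\det T|\le 1$ is what keeps the sup-norm from blowing up, so one should order the John decomposition to arrange this, or alternatively absorb a $d$-dependent determinant factor into $C_1$). If one prefers to avoid any reduction, an equally valid route is to prove the bound directly: since $f\le M$, for every halfspace $H$ one has $\int_{G\cap H}f\le M|G\cap H|$, so the $\varepsilon$-wet-part estimate of Bárány--Larman (Theorem~\ref{ThmBaranyLarman}) together with the covering-number bound of Theorem~\ref{ThmBro} yields the peeling argument with the Nikodym wet part in place of the $f$-wet part, at the cost of the extra factor $M$; the details mirror those of the uniform deviation theorem verbatim with $|\cdot|$ replaced by $\int_{\cdot}f$ wherever an upper bound is needed. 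Either way, no new ideas beyond those already deployed for the uniform result are required; the content is the observation that a bounded density is comparable to the uniform one on $G$.
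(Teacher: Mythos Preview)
Your primary route contains a genuine gap. The preceding deviation inequality for $|G\setminus\hat K_n|/|G|$ is stated and proved under $\PP_G$, i.e., when $X_1,\ldots,X_n$ are \emph{uniform} on $G$. In the present theorem the sample is drawn from $f$, so $\hat K_n$ has a different law and you cannot invoke that bound to control $|G\setminus\hat K_n|$. This is not a bookkeeping issue: if $f$ vanishes on a region of $G$ of large volume, then $|G\setminus\hat K_n|$ stays bounded away from zero with $\PP_f$-probability one, and no inequality of the form $|G\setminus\hat K_n|\lesssim_d n^{-2/(d+1)}+x/n$ can hold. The chain $\textsf{d}_f(G,\hat K_n)\le M\,|G\setminus\hat K_n|$ is correct but useless here, because the right-hand side is not controlled under $\PP_f$. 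The ``only subtlety'' you identify (the factor $|G|$) is not the real obstacle; the real obstacle is the change of sampling distribution.

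Your alternative route is essentially what the paper does. As the paper indicates (``using the same techniques''), one reruns the covering-number/peeling argument directly for the pseudometric $\textsf{d}_f(G_1,G_2)=\int_{G_1\triangle G_2}f$, and the needed entropy bound is supplied by Theorem~\ref{ThmDudley}: when $f\le M$, the $\varepsilon$-entropy with inclusion of $\KK$ with respect to $\mu=f\,\diff x$ is $\lesssim_{d,M}\varepsilon^{-(d-1)/2}$, which is exactly where the factor $M$ enters the final constant. Two minor corrections to your description of this route: the relevant input is Theorem~\ref{ThmDudley} (bracketing entropy for $\KK$ under a bounded-density measure), not the B\'ar\'any--Larman wet-part estimate of Theorem~\ref{ThmBaranyLarman}, which concerns expectations under the uniform law and plays no role in the deviation argument; and no affine reduction is needed at all, since the hypothesis already places $G\in\KK$.
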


It is not known whether a similar upper bound would hold without the assumption that $f\leq M$ almost everywhere. This open problem amounts to the following open question. Let $\mu$ be any probability measure supported in a convex body $G\in\KK$. Do there exist positive constants $c_1$ and $c_2$ that only depend on $d$, such that the $\varepsilon$-covering of $\KK$ with respect to the metric $d(G_1,G_2)=\mu(G_1\triangle G_2), G_1,G_2\in\KK$ is bounded from above by $c_1e^{c_2\varepsilon^{-(d-1)/2}}$, for all $\varepsilon\in (0,1)$? If $\mu$ has a bounded density $f$ with respect to the Lebesgue measure, the answer is positive, and it is a consequence of Theorem \ref{ThmDudley} below.

In the uniform case, concentration inequalities for $\hat K_n$ were proven in \cite{Vu2005}, using geometric techniques. However, constants were not explicit and depended on the support $G$, hence, could not be used in a minimax approach.

\subsection{Application of empirical process theory to the estimation of density level sets}

In this section, we show how similar ideas as in Section \ref{Sec:CovNum} can be used to estimate density level sets under a convexity restriction, in the Nikodym metric. The level sets of a density $f$ in $\R^d$ are the sets $G_\lambda=\{x\in\R^d:f(x)\geq\lambda\}$, for $\lambda> 0$. Estimation of density level sets and, more specifically, of convex level sets, has been tackled, e.g., in \cite{Hartigan1987, Polonik1995, Tsybakov1997}. As pointed by \cite{Hartigan1987}, estimation of density level sets may be useful in cluster analysis. It arises as a natural tool in testing for multimodality \cite{muller1991excess} and, more recently, it has been explored under the lens of topological data analysis \cite{wasserman2016topological,chazal2017introduction}. Notice that the $0$-level set of a density $f$ is its support, so support estimation is a particular case of density level set estimation. However, in this section we only treat the case of positive levels $\lambda$, where empirical process theory has proven to be a successful tool.

 Let $\lambda>0$ such that $G_\lambda\neq\emptyset$. The excess mass of a measurable set $C\subseteq \R^d$ is defined as $\mathcal M_\lambda(C)=\int_C f(x)\diff x-\lambda |C|$. Simple algebra shows that $\mathcal M_\lambda(C)\leq \mathcal M_\lambda(G_\lambda)$, for all measurable sets $C\subseteq \R^d$. The empirical excess mass of a set $C$, given a sample $X_1,\ldots,X_n$, is naturally defined as $\widehat {\mathcal M}_\lambda(C)=\frac{1}{n}\sum_{i=1}^n\mathds 1_{X_i\in C}-\lambda |C|$. Hence, the main idea to estimate $G_\lambda$ is to maximize $\widehat{\mathcal M}_\lambda(C)$ over $C\in\mathcal C$, where $\mathcal C$ is a given class of measurable sets. In this section, we assume that $G_\lambda\in\KK$ and we take $\mathcal C=\KK$. For instance, convexity of $G_\lambda$ is ensured if $f$ is log-concave or, more generally, quasiconcave. If $f$ is the uniform density on a convex body $G\in \KK$, then $G=G_\lambda$ for all $\lambda\in (0,\beta_d^{-1})$: In that case, support estimation is equivalent to level set estimation, for small levels $\lambda$ and the methods presented here could be applied to estimate $G$ itself. In what follows, $\lambda>0$ is a fixed number and we define the estimator $\hat G_n\in \underset{G\in\KK}{\operatorname{argmax}}\mbox{ }\widehat{\mathcal M}_\lambda(G)$.

In order to achieve consistency, an assumption is usually made about the behavior of $f$ around the boundary of its level set $G_\lambda$. Namely, $f$ should not be too flat near the boundary of $G_\lambda$. The assumption proposed in \cite{Polonik1995} takes the following form, where $\mu$ is the continuous probability measure on $\R^d$ with density $f$.

\begin{assumption} \label{AssumpLevelSet}
	There exist positive constants $c$ and $\gamma$ such that
	$$\mu\left(\{x\in\R^d:|f(x)-\lambda|<\eta \}\right)\leq c\eta^\gamma,$$
	for all $\eta>0$ small enough. 
\end{assumption}

Assumption \ref{AssumpLevelSet}, also known as \textit{margin condition}, is usually imposed for discriminant analysis \cite{MammenTsybakov1999,LoustauMarteau2015}, statistical learning \cite{Tsybakov2004}, level set estimation (a stronger assumption is proposed in \cite{Tsybakov1997}, see Assumption \ref{AssumpLevelSet2} below) or density support estimation \cite{Brunel2017}.

In \cite{Polonik1995}, the notion of covering number with inclusion, slightly different from that of covering number, is used to prove the main results.

\begin{definition}[Covering number with inclusion]

Let $\mathcal C$ be a class of measurable subsets of $B(0,1)$, $\mu$ a probability distribution in $\R^d$ and $\varepsilon>0$. The $\varepsilon$-covering number of $\mathcal C$ with inclusion with respect to $\mu$ is the smallest integer $N$ such that there exists a collection $\mathcal N$ of measurable sets, with $\#\mathcal N=N$, satisfying the following: For all $C\in\mathcal C$, there exist $C_*,C^*\in\mathcal N$ with $C_*\subseteq C\subseteq C^*$ and $\mu(C^*\setminus C_*)\leq\varepsilon$. It is denoted by $N_I(\varepsilon,\mathcal C,\mu)$ and $\ln N_I(\varepsilon,\mathcal C,\mu)$ is called the \textit{metric entropy with inclusion} of the class $\mathcal C$ with respect to $\mu$.

\end{definition}

Note that in this definition, $\mathcal N$ need not be included in $\mathcal C$. Also note that a similar notion, called metric entropy with bracketing, is widely used in function estimation, especially in empirical process theory (e.g., see \cite[Section 19.2]{Vandervaart}). Let $(\mathcal E,\|\cdot\|)$ be a normed space of real-valued functions defined on a set $X$ and let $\mathcal F\subseteq \mathcal E$. For any two functions $l,r\in \mathcal E$, the bracket $[l,r]$ is defined as the set of all functions $f\in \mathcal F$ satisfying $l(x)\leq f(x)\leq r(x)$ for all $x\in X$. For all $\varepsilon>0$, the $\varepsilon$-bracketing number of $F$ with respect to $\|\cdot\|$ is the smallest numbers of brackets $[l,r]$ with $\|r-l\|\leq\varepsilon$ needed to cover $\mathcal F$. It is denoted by $N_{[]}(\varepsilon,\mathcal F,\|\cdot\|)$ and $\ln N_{[]}(\varepsilon,\mathcal F,\|\cdot\|)$ is called the \textit{metric entropy with bracketing} of the class $\mathcal F$ with respect to $\|\cdot\|$. It is easy to see that for all class of measurable sets $\mathcal C$, if we let $\mathcal F_{\mathcal C}=\{\mathds 1_C:C\in\mathcal C\}$, then $\DS \ln N_I(\varepsilon,\mathcal C,\mu)$ and $\DS \ln N_{[]}(\varepsilon,\mathcal F_{\mathcal C},\|\cdot\|_{1,\mu})$ differ by at most a factor $2$, where $\DS \|\phi\|_{1,\mu}=\int_{\R^d}|\phi(x)|\diff\mu(x)$, for all measurable, bounded functions $\phi:\R^d\to\R$. The following estimate is available for the class $\KK$:

\begin{theorem}[\cite{Dudley2014}] \label{ThmDudley}
Let $\mu$ be a continuous probability measure on $B(0,1)$ with a density $f$ with respect to the Lebesgue measure. Assume that $f\leq M$ almost everywhere, where $M>0$ is a given number. Then, as $\varepsilon\to 0$,
	$$\ln N_I(\varepsilon,\KK,\mu)\lesssim_{d,M} \varepsilon^{-\frac{d-1}{2}}.$$ 
\end{theorem}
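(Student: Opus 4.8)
The plan is to deduce the estimate from Bronshtein's bound (Theorem~\ref{ThmBro}) by thickening and eroding the members of a Hausdorff net, so as to produce genuine inner/outer brackets. Fix $\delta\in(0,1)$ and let $\mathcal N_0\subseteq\KK$ be a Hausdorff $\delta$-net of $\KK$ of minimal cardinality, so that $\#\mathcal N_0\le c_1e^{c_2\delta^{-(d-1)/2}}$. For a convex body $K$, let $K^{\delta}=K+B(0,\delta)$ be its outer parallel body and $K_{-\delta}=\{x\in\R^d:B(x,\delta)\subseteq K\}$ its inner parallel body (possibly empty), and set $\mathcal N=\{K_{-\delta}:K\in\mathcal N_0\}\cup\{K^{\delta}:K\in\mathcal N_0\}$, so $\#\mathcal N\le 2\,\#\mathcal N_0$. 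Note that the definition of covering with inclusion does not require $\mathcal N\subseteq\KK$, so the fact that $K^{\delta}$ may stick out of $B(0,1)$ is harmless.

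The first step is to check that $\mathcal N$ brackets $\KK$. Given $G\in\KK$, pick $K\in\mathcal N_0$ with $\textsf{d}_{\textsf{H}}(G,K)\le\delta$. Then $G\subseteq K+B(0,\delta)=K^{\delta}$, and I claim $K_{-\delta}\subseteq G$. Indeed, suppose $x\in K_{-\delta}$ but $x\notin G$; let $p$ be the metric projection of $x$ onto the closed convex set $G$ and put $y=x+\delta(x-p)/\|x-p\|$. Then $\|y-x\|=\delta$, so $y\in B(x,\delta)\subseteq K\subseteq G+B(0,\delta)$ and hence $\rho(y,G)\le\delta$; on the other hand the defining inequality $\langle x-p,g-p\rangle\le 0$ for all $g\in G$ also holds for $y$ in place of $x$, so $p$ is the projection of $y$ too and $\rho(y,G)=\|x-p\|+\delta>\delta$, a contradiction. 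Thus $K_{-\delta}\subseteq G\subseteq K^{\delta}$ with both endpoints in $\mathcal N$.

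The second step is the uniform shell estimate $|K^{\delta}\setminus K_{-\delta}|\le c(d)\delta$ for every convex $K\subseteq B(0,1)$ and $\delta\in(0,1)$. Split $|K^{\delta}\setminus K_{-\delta}|=(|K^{\delta}|-|K|)+(|K|-|K_{-\delta}|)$. By the Steiner formula~\eqref{Steiner}, $|K^{\delta}|-|K|=\sum_{j=1}^{d}\beta_{d-j}v_j(K)\delta^{j}$, which is $\le c(d)\delta$ since $\delta<1$, $K\subseteq B(0,1)$, and the intrinsic volumes are monotone under inclusion. For the second term one uses the one-sided analogue of Steiner's formula: $t\mapsto|K_{-t}|$ is nonincreasing with $|K|-|K_{-\delta}|=\int_0^{\delta}\mathcal H^{d-1}(\partial K_{-t})\,dt$ (the integral stopping at the inradius of $K$ when $K_{-\delta}=\emptyset$), so that $|K|-|K_{-\delta}|\le\delta\,\mathcal H^{d-1}(\partial K)\le c(d)\delta$ by monotonicity of the surface area under inclusion and $K\subseteq B(0,1)$. (Alternatively, the domination of the Nikodym metric by the Hausdorff metric on $\KK$ quoted before Theorem~\ref{ThmBro} can be invoked in place of this computation.) Since $f\le M$, it follows that $\mu(K^{\delta}\setminus K_{-\delta})\le M\,|K^{\delta}\setminus K_{-\delta}|\le c(d)M\delta$.

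Finally, choose $\delta=\varepsilon/(c(d)M)$, which is $<1$ for $\varepsilon$ small. By the two steps above, for every $G\in\KK$ there are $C_{*}=K_{-\delta}$ and $C^{*}=K^{\delta}$ in $\mathcal N$ with $C_{*}\subseteq G\subseteq C^{*}$ and $\mu(C^{*}\setminus C_{*})\le\varepsilon$, which shows $N_I(\varepsilon,\KK,\mu)\le\#\mathcal N\le 2c_1e^{c_2(c(d)M)^{(d-1)/2}\varepsilon^{-(d-1)/2}}$; taking logarithms gives $\ln N_I(\varepsilon,\KK,\mu)\lesssim_{d,M}\varepsilon^{-(d-1)/2}$ as $\varepsilon\to0$. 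The only genuinely technical point is the uniform shell estimate of the third paragraph, and inside it the control of $|K|-|K_{-\delta}|$ uniformly in $K\subseteq B(0,1)$ — in particular when $K$ is so thin that its inner parallel body is empty; once that is granted, everything else is bookkeeping built on the Steiner formula~\eqref{Steiner} and Theorem~\ref{ThmBro}.
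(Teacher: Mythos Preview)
The paper does not actually prove Theorem~\ref{ThmDudley}: it is quoted as a known result from \cite{Dudley2014}, with no argument supplied. So there is nothing in the paper to compare your proof against.

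That said, your argument is correct and is essentially the standard route to this estimate. The bracketing step $K_{-\delta}\subseteq G\subseteq K^{\delta}$ from a Hausdorff $\delta$-approximant is clean, and your verification that $K_{-\delta}\subseteq G$ via the metric projection is fine. The shell estimate $|K^{\delta}\setminus K_{-\delta}|\le c(d)\delta$ is the only place where one must be slightly careful, and you handle it correctly: the outer part follows from Steiner's formula together with monotonicity of intrinsic volumes, and for the inner part the coarea identity $|K|-|K_{-\delta}|=\int_0^{\min(\delta,r_K)}\mathcal H^{d-1}(\partial K_{-t})\,\diff t$ combined with $\mathcal H^{d-1}(\partial K_{-t})\le\mathcal H^{d-1}(\partial K)\le c(d)$ does the job, including the degenerate case $r_K<\delta$ that you flag. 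Your parenthetical alternative---invoking the Nikodym/Hausdorff comparison stated just before Theorem~\ref{ThmBro}---would need a minor adjustment since $K^{\delta}$ may leave $B(0,1)$, but the same comparison holds on $\mathcal K_{d}^{(2)}$ (convex bodies in $B(0,2)$) with a different constant, so that route also works. The final bookkeeping with $\delta=\varepsilon/(c(d)M)$ is exactly right.
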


Together with this estimate, \cite[Theorem 3.7]{Polonik1995} yields the following result.

\begin{theorem}[\cite{Polonik1995}] \label{ThmPol95}
Assume that $d\geq 2$. There exists a constant $c(d)$ such that the following holds with probability tending to one, as $n$ goes to infinity. Let $\mu$ be a probability measure on $B(0,1)$ with a bounded density $f$ with respect to the Lebesgue measure and let Assumption \ref{AssumpLevelSet} hold. Let $\lambda>0$ and let $G_\lambda\in\KK$. Then,
$$\mu\left(\hat G_n\triangle G_\lambda\right)\leq \begin{cases} c(2)n^{-\frac{2\gamma}{3\gamma+4}} \mbox{ if } d=2, \\ c(3)n^{-\frac{\gamma}{2\gamma+2}}\ln n \mbox{ if } d=3 \\ c(d)n^{-\frac{2\gamma}{(\gamma+1)(d+1)}} \mbox{ if } d\geq 4. \end{cases}$$
\end{theorem}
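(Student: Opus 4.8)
The plan is to follow the classical excess-mass M-estimation argument, combining the basic inequality for the empirical maximizer with the entropy bound of Theorem~\ref{ThmDudley} and the margin condition of Assumption~\ref{AssumpLevelSet}. First I would record the deterministic starting point: since $\hat G_n$ maximizes the empirical excess mass over $\KK$ and $G_\lambda\in\KK$, we have $\widehat{\mathcal M}_\lambda(\hat G_n)\geq\widehat{\mathcal M}_\lambda(G_\lambda)$, while the population excess mass satisfies $\mathcal M_\lambda(G_\lambda)\geq\mathcal M_\lambda(\hat G_n)$. Subtracting, the excess risk $\mathcal M_\lambda(G_\lambda)-\mathcal M_\lambda(\hat G_n)$ is bounded by the empirical process fluctuation $(\widehat{\mathcal M}_\lambda-\mathcal M_\lambda)(\hat G_n)-(\widehat{\mathcal M}_\lambda-\mathcal M_\lambda)(G_\lambda)$, which equals $(\mathbb P_n-\mu)(\mathds 1_{\hat G_n}-\mathds 1_{G_\lambda})$ since the $\lambda|C|$ term is deterministic. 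So everything reduces to controlling $\sup\{(\mathbb P_n-\mu)(\mathds 1_G-\mathds 1_{G_\lambda}):G\in\KK\}$ in a way that is localized around $G_\lambda$.

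Second, I would translate both sides into the natural metric $\textsf{d}(G,G_\lambda):=\mu(G\triangle G_\lambda)$. On the population side, a Taylor-type expansion together with Assumption~\ref{AssumpLevelSet} gives a lower bound of the form $\mathcal M_\lambda(G_\lambda)-\mathcal M_\lambda(G)\gtrsim \textsf{d}(G,G_\lambda)^{1+1/\gamma}$ (the margin condition says $f$ leaves the level $\lambda$ fast enough that moving the boundary by a set of $\mu$-mass $t$ costs at least $t^{1+1/\gamma}$ in excess mass, up to constants). On the stochastic side, the variance of $\mathds 1_G-\mathds 1_{G_\lambda}$ under $\mu$ is exactly $\textsf{d}(G,G_\lambda)$, so this is a standard situation for a peeling/slicing argument: partition $\KK$ into shells $\{G:2^{-j-1}<\textsf{d}(G,G_\lambda)\leq 2^{-j}\}$, bound the supremum of the empirical process over each shell via a chaining bound fed by Theorem~\ref{ThmDudley} (entropy exponent $(d-1)/2$), and balance. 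Because the entropy exponent exceeds $1$ when $d\geq 4$, Dudley's integral diverges and one gets the non-Donsker rate driven by the entropy, which is where the three cases $d=2$, $d=3$, $d\geq 4$ come from; the logarithmic factor in $d=3$ is exactly the borderline entropy exponent $1$.

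Third, combining the two bounds: on the event that the empirical process is controlled, $\textsf{d}(\hat G_n,G_\lambda)^{1+1/\gamma}\lesssim$ (empirical process bound evaluated at scale $\textsf{d}(\hat G_n,G_\lambda)$), and solving this self-bounding inequality for $\textsf{d}(\hat G_n,G_\lambda)$ yields the stated rates after plugging in $\gamma$ and $d$. Concretely, with entropy exponent $\alpha=(d-1)/2$, the shell-wise empirical-process bound at scale $\delta$ behaves like $n^{-1/2}\delta^{1-\alpha/2}$ for $\alpha<2$ (i.e. $d<5$) with a $\ln$ correction at $\alpha=2$; matching $\delta^{1+1/\gamma}\asymp n^{-1/2}\delta^{(1-\alpha/2)\wedge\cdots}$ and handling the $d=2,3$ cases separately reproduces $n^{-2\gamma/(3\gamma+4)}$, $n^{-\gamma/(2\gamma+2)}\ln n$, and $n^{-2\gamma/((\gamma+1)(d+1))}$ respectively. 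Since the statement only claims convergence with probability tending to one (not in expectation or with exponential tails), it suffices to invoke a weak law / Markov bound for the relevant suprema rather than full concentration, which keeps the argument light.

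The main obstacle is the localized empirical-process bound over the shells: one must show that the supremum of $(\mathbb P_n-\mu)(\mathds 1_G-\mathds 1_{G_\lambda})$ over $\{G\in\KK:\textsf{d}(G,G_\lambda)\le\delta\}$ is of the right order in $\delta$ using only the global entropy estimate of Theorem~\ref{ThmDudley}. This requires a careful chaining argument (or an application of a ready-made maximal inequality such as \cite[Theorem 3.7]{Polonik1995}, which is precisely engineered for covering numbers with inclusion and the excess-mass functional) together with the observation that covering $\KK$ with inclusion automatically controls the variance, so that the bracketing-type entropy integral is the only input needed. Getting the exponents in the three regimes exactly right — in particular the $\ln n$ at $d=3$ — is the delicate bookkeeping step, but it is routine once the localized bound is in hand; the real content is verifying the hypotheses of \cite[Theorem 3.7]{Polonik1995} (boundedness of $f$, the margin condition, and $G_\lambda\in\KK$) and invoking Theorem~\ref{ThmDudley} to supply the entropy rate $\varepsilon^{-(d-1)/2}$.
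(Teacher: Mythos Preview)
Your proposal is correct and matches the paper's approach exactly: the paper does not give an independent proof but simply observes that the result follows by feeding the entropy estimate of Theorem~\ref{ThmDudley} into \cite[Theorem 3.7]{Polonik1995}, which is precisely the ready-made maximal inequality you identify, and whose internal mechanics (basic inequality for the empirical excess-mass maximizer, margin-based lower bound $\mathcal M_\lambda(G_\lambda)-\mathcal M_\lambda(G)\gtrsim \mu(G\triangle G_\lambda)^{1+1/\gamma}$, and a peeling/chaining bound driven by the bracketing exponent $(d-1)/2$) you have outlined accurately. The only caveat is that your heuristic for the case split (threshold at entropy exponent $\alpha=2$) is slightly off---the trichotomy $d=2$, $d=3$, $d\geq 4$ corresponds to $r=(d-1)/2$ being $<1$, $=1$, $>1$---but you already flag the bookkeeping as delicate and defer to Polonik's theorem for the precise exponents, which is all the paper does as well.
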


In fact, this theorem is stated under more general assumptions than convexity of the level sets. If the level sets belong to a class of sets with metric entropy with inclusion of order $\varepsilon^{-r}$, for some exponent $r>0$ (e.g., $r=(d-1)/2$ for the class $\KK$), the rates given in the theorem depend on $d$, $\gamma$ and $r$. It is noticeable that the exponent $r=(d-1)/2$ in the metric entropy with inclusion of the class $\KK$ matches that of the class of sets with twice differentiable boundaries in some sense (see \cite{Dudley2014}). 

Note that the estimator $\hat G_n$ defined in \cite{Polonik1995} is a polytope, and its vertices are sample points. Indeed, for all $C\in\KK$, $\widehat{\mathcal M}_\lambda(C)\leq \widehat{\mathcal M}_\lambda(C^*)$, where $C^*$ is the convex hull of the sample points contained in $C$. In the two dimensional case, \cite{Hartigan1987} designs an algorithm to compute $\hat G_n$. To the best of our knowledge, there is no algorithm to compute $\hat G_n$ or an approximation of $\hat G_n$ in higher dimensions. 

Optimality of the upper bounds in the above theorem is not proven in \cite{Polonik1995}. However, \cite{Tsybakov1997} proves lower bounds for the minimax risk in both Nikodym and Hausdorff metrics. In the Nikodym metric, the lower bounds proven by \cite{Tsybakov1997} match the upper bounds given in the above theorem only for $d=2,3$ (up to a logarithmic factor when $d=3$), and they are faster when $d\geq 4$. The estimation of convex level sets in the Hausdorff metric requires completely different techniques. It has been tackled in \cite{sager1979iterative,Tsybakov1997}. In \cite{sager1979iterative}, the author considers both level sets corresponding to a given level and level sets with given probability content (see also \cite{Cadre} for the estimation of level sets with given probability content); The results are then applied to the estimation of the mode of the density, by considering the smallest estimated level set. Note that a control of the estimated level sets in the Nikodym metric could not yield consistent estimation of the mode, since two sets can have a very small Nikodym distance if they both have very small volumes, even if they are far apart from each other in the space. Optimal rates in estimation of convex density level sets in both Nikodym and Hausdorff metrics are given in \cite{Tsybakov1997} when $d=2$ and they are extended to higher dimensions. More generally, \cite{Tsybakov1997} proves optimal rates for density level sets whose boundaries satisfy some smoothness condition. In fact, it is noticed that if $G\in\K$ satisfies $B(0,r)\subseteq G\subseteq B(0,R)$ for some $0<r<R$, then the boundary of $G$ is Lipschitz, in the sense that the radial function of $G$, defined as $r_G(u)=\max\{\lambda\geq 0:\lambda u\in G\}$, is Lipschitz. For completeness, we include the precise statement and its proof here.

\begin{lemma} \label{LemmaLip}
	Let $G\in\K$ satisfies $B(0,r)\subseteq G\subseteq B(0,R)$ for some $0<r<R$. Then, the radial function $r_G$ satisfies $|r_G(u)-r_G(u')|\leq R/(2r)\|u-u'\|$, for all $u,u'\in\Sp$.
\end{lemma}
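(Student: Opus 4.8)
The plan is to work directly with the radial function and exploit the two-sided ball containment geometrically. Fix $u,u'\in\Sp$ and, without loss of generality, assume $r_G(u)\ge r_G(u')$. Set $x=r_G(u)\,u\in\partial G$ and $x'=r_G(u')\,u'\in\partial G$. Both points lie on the ray-boundary of $G$, so $r\le \|x\|,\|x'\|\le R$. The key idea is to produce, from the point $x$, a nearby point on the ray through $u'$ that still lies in $G$, and use convexity: since $B(0,r)\subseteq G$, the point $0$ is "deep" inside $G$, so the cone generated by $B(0,r)$ and $x$ lies in $G$. Concretely, I would show that the segment from $x$ to any point of $B(0,r)$ stays in $G$, hence a suitable point $\lambda u'$ with $\lambda$ close to $r_G(u)$ belongs to this cone and therefore to $G$, which forces $r_G(u')\ge \lambda$.

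To make this quantitative, consider the point $y = \|x\|\,u'$ lying on the ray through $u'$ at the same distance as $x$. We have $\|y-x\| = \|x\|\,\|u'-u\| \le R\|u-u'\|$. Now I claim that $y$ is within distance (essentially) $R\|u-u'\|$ of $x$, and since the ball $B(0,r)\subseteq G$ together with $x\in G$ gives by convexity that $G$ contains the convex hull of $\{x\}\cup B(0,r)$, which contains a ball around the segment $[0,x]$ of radius growing linearly from $0$ at $x$ to $r$ at $0$. A point at fractional distance $t\in[0,1]$ from $x$ along a direction toward the origin region is contained in $G$ as long as its transverse deviation from the segment $[0,x]$ is at most $tr$. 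Taking the point $\lambda u'$ on the ray through $u'$ with $\lambda = r_G(u) - \delta$ for appropriate $\delta$, its distance to the segment $[0,x]$ is at most of order $\lambda\|u-u'\| \le R\|u-u'\|$, while its fractional distance from $x$ is at least $\delta/R$; the cone condition $R\|u-u'\| \le (\delta/R)\,r$ then yields $\lambda u'\in G$, i.e. $r_G(u')\ge \lambda = r_G(u) - \delta$ with $\delta = R^2\|u-u'\|/r$. Optimizing constants (the factor $1/2$ in the statement comes from a slightly sharper version of this cone estimate, e.g. bounding the half-angle of the cone by $r/R$ and using a first-order trigonometric estimate) gives $|r_G(u)-r_G(u')|\le \tfrac{R}{2r}\|u-u'\|$.

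A cleaner route, which I would actually write up, is to use the support-function/normal-cone duality: at $x\in\partial G$ there is an outer normal $\nu$, and $B(0,r)\subseteq G$ forces $\langle \nu, x\rangle \ge r$, i.e. the supporting hyperplane at $x$ makes an angle with $x$ bounded away from $\pi/2$; combined with $\|x\|\le R$ this bounds the "slope" of $\partial G$ relative to the radial direction, which is exactly a Lipschitz bound on $r_G$ with constant $R/(2r)$ after the elementary trigonometry. The main obstacle is purely the bookkeeping of constants: verifying that the geometric cone/normal estimate produces precisely the constant $R/(2r)$ rather than something like $R/r$ requires care with the half-angle estimate $\tan\theta \le \sqrt{R^2-r^2}/r$ and the relation between chord length $\|u-u'\|$ and the induced change in $r_G$. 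None of the steps is deep, but the constant-tracking is where an incorrect factor could sneak in.
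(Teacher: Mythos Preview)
Your approach is genuinely different from the paper's. The paper argues via polar duality: it uses the identity $r_G = 1/h_{G^\circ}$, where $G^\circ$ is the polar body of $G$, observes that $B(0,R^{-1})\subseteq G^\circ\subseteq B(0,r^{-1})$, deduces from subadditivity that $h_{G^\circ}$ is $r^{-1}$-Lipschitz and bounded below by $R^{-1}$, and then passes to the reciprocal. Your direct cone/supporting-hyperplane argument avoids the polar body entirely and is perhaps more transparent geometrically; the paper's route is slicker once one remembers the identity $r_G = 1/h_{G^\circ}$, since the Lipschitz bound for a support function is a one-line consequence of subadditivity.

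Your anxiety about the constant is well placed, and you should not spend further effort chasing the factor $1/2$. If one tracks the paper's own argument, the final step (``$h_{G^\circ}$ is $r^{-1}$-Lipschitz and $\geq R^{-1}$, hence $1/h_{G^\circ}$ is $(R/(2r))$-Lipschitz'') actually gives
\[
\left|\frac{1}{a}-\frac{1}{b}\right|=\frac{|a-b|}{ab}\le R^2\,|a-b|,
\]
i.e.\ Lipschitz constant $R^2/r$, which is exactly the constant your cone computation produced. A concrete check shows this is the right order: for $G=B(0,R)\cap\{x:\langle x,v\rangle\le r\}$ with a fixed unit $v$, one has $r_G(u)=r/\langle u,v\rangle$ on the flat part, and differentiating near the corner (where $\langle u,v\rangle=r/R$) gives a slope of size $R\sqrt{R^2-r^2}/r$, which exceeds $R/(2r)$ as soon as $R>r\sqrt{5}/2$. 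So neither your argument nor the paper's can reach $R/(2r)$ in general. For the application in the paper only the qualitative Lipschitz property matters, and either argument delivers that with constant of order $R^2/r$.
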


\begin{proof}
	Let $G^{\circ}$ be the polar body of $G$, defined as $G^{\circ}=\{x\in\R^d: \langle x,y\rangle\leq 1,\forall y\in K\}$. By standard properties of polar bodies (see \cite[Chapter 1]{Schneider1993}), one has $B(0,R^{-1})\subseteq G^{\circ}\subseteq B(0,r^{-1})$ and the radial function $r_G$ is the inverse of the support function of $G^{\circ}$: $\DS r_G(u)=\left(h_{G^{\circ}}(u)\right)^{-1}$, for all $u\in\Sp$. Subadditivity of support functions yield $\DS |h_{G^{\circ}}(u)-h_{G^{\circ}}(u')|\leq \max\left(h_{G^{\circ}}(u-u'),h_{G^{\circ}}(u'-u)\right)=\|u-u'\|\max\left(h_{G^{\circ}}\left(\frac{u-u'}{\|u-u'\|}\right),h_{G^{\circ}}\left(\frac{u'-u}{\|u-u'\|}\right)\right)$, for all $u,u'\in\Sp$ with $u\neq u'$. Since $G^{\circ}\subseteq B(0,r^{-1})$, $h_{G^{\circ}}(v)\leq r^{-1}$, for all $v\in\Sp$. This proves that $h_{G^{\circ}}$ is $r^{-1}$-Lipschitz. Now, since we also have that $B(0,R^{-1})\subseteq G^{\circ}$, $h_{G^{\circ}}(v)\geq R^{-1}$, for all $v\in\Sp$. Hence, $(h_{G^{\circ}})^{-1}$ is $(R/(2r))$-Lipschitz.
\end{proof}

First, \cite{Tsybakov1997} computes the optimal rates for star shaped density level sets with smooth radial functions. Standard techniques from functional estimation are used, such as local polynomial approximations. Then, the author tackles the problem of estimating convex level sets. As shown in Lemma \ref{LemmaLip}, the case of convex level sets is included in the case of star shaped level sets with Lipschitz radial functions. Hence, the optimal rates for convex sets are not larger than the ones corresponding to Lipschitz radial functions. Perhaps surprisingly, in the Hausdorff metric, convexity of the level set does not make the problem easier than just the Lipschitz property of its radial function, since \cite{Tsybakov1997} shows that the optimal rate under convexity matches the optimal rate under just the Lipschitz assumption, up to logarithmic factors. In the Nikodym metric, the situation is very different: \cite{Tsybakov1997} proves that at least in dimension $2$, the optimal rate for convex sets is actually much faster than in the case of Lipschitz radial functions: This is a consequence of Theorem \ref{ThmPol95} above. It can be seen easily that the same holds when $d=3$, and \cite{Tsybakov1997} suggests that this holds in arbitrary dimension, without a giving proof. Hence, in the Nikodym metric, convexity does contribute and improve the optimal rate from the Lipschitz assumption.

\cite{Tsybakov1997} does not exactly use the same margin condition as \cite{Polonik1995}, but makes the following assumption. Let $f$ be a density in $\R^d$ and let $G_\lambda$ be its level set with level $\lambda>0$. Assume that $G_\lambda$ is star shaped around the origin, and let $r_\lambda$ be its radial function.

\begin{assumption} \label{AssumpLevelSet2}

Let $b_1,b_2>0$ with $b_1<b_2$, $\nu,\delta_0>0$. Then, for all $u\in\Sp$ and $r>0$ such that $|f(ru)-\lambda|\leq\delta_0$,	
$$b_1\leq \frac{|f(ru)-\lambda|}{|r-r_{\lambda}(u)|^\nu}\leq b_2.$$

\end{assumption}

Roughly, Assumption \ref{AssumpLevelSet2} is stronger than Assumption \ref{AssumpLevelSet} if one takes $\gamma=1/\nu$. Under Assumption \ref{AssumpLevelSet2}, \cite{Tsybakov1997} characterizes the optimal rates for the estimation of a convex level set $G_\lambda$ that satisfies $B(0,r)\subseteq G_\lambda\subseteq B(0,R)$ with $0<r<R$ when $d=2$ and suggest the following extensions to higher dimensions: $n^{-2/(4\nu+d+1)}$ in the Nikodym metric and $n^{-1/(2\nu+d)}$ (up to a logarithmic factor) in the Hausdorff metric. In the Nikodym metric, the upper bound follows directly from \cite{Polonik1995} when $d=2$ but \cite{Tsybakov1997} does not give a proof for larger $d$. For arbitrary $d$, the rates suggested in \cite{Tsybakov1997} are actually faster than the upper bounds given in \cite{Polonik1995}. In the Hausdorff metric and for any $d$, as explained above, the upper bound follows directly from the Lipschitz case, by Lemma \ref{LemmaLip}. 

When dealing with level sets with smooth radial functions in arbitrary dimension, \cite{Tsybakov1997} proves that the minimax rates are exactly given by $n^{-\beta/((2\nu+1)\beta+d-1)}$ in the Nikodym metric and $(n/\ln n)^{-\beta/((2\nu+1)\beta+d-1)}$ in the Hausdorff metric, where $\beta$ is a smoothness parameter that roughly corresponds to the number of bounded derivatives of the radial function (e.g., $\beta=1$ corresponds to the Lipschitz case). It is noticeable that for convex level sets, the minimax rate $n^{-1/(2\nu+d)}$ in the Hausdorff metric matches the one that corresponds to smoothness $\beta=1$, as discussed above (and as predicted by Lemma \ref{LemmaLip}), whereas in the Nikodym metric, the minimax rate $n^{-2/(4\nu+d+1)}$ for convex level sets matches the rate that corresponds to smoothness $\beta=2$. This complements the remark we made earlier: The exponent $r=(d-1)/2$ in the metric entropy with inclusion for convex bodies is the same as for sets with twice differentiable boundary (see \cite{Polonik1995} and \cite{Dudley2014} for more details), and the corresponding minimax rates match. However, note that even though the boundary of any convex body is twice differentiable almost everywhere, the class of convex bodies $G\in\K$ with $B(0,r)\subseteq G\subseteq B(0,R)$, where $0<r<R$, contains polytopes with arbitrarily many vertices, which have very non-smooth boundaries, together with convex bodies with smooth boundaries and positive curvature everywhere, which yet can take arbitrarily large values.

Finally, note that the rates $n^{-2/(4\nu+d+1)}$ and $n^{-1/(2\nu+d)}$ given in \cite{Tsybakov1997} match (up to logarithmic factors) those obtained in the estimation of the support of a uniform distribution, i.e., at the limit $\nu=0$. In the Nikodym metric, the minimax rate of estimation of convex bodies is $n^{-2/(d+1)}$ (see Theorem \ref{ThmMinimaxSuppEst} above), whereas in the Hausdorff metric, it is $(n/\ln n)^{-1/d}$, as shown in Theorem \ref{ThmHausdorff2017} (with $\alpha=d$), see \cite{BrunelHausdorff2017}.

\subsection{Convex support estimation in nonparametric regression}

Let the following model hold:
$$Y_i=f(X_i)+\xi_i, \quad i=1,\ldots,n,$$
where $X_1,\ldots,X_n$ are deterministic or random points in $[0,1]^d$, $\xi_1,\ldots,\xi_n$ are i.i.d. random variables, with mean zero, independent of $X_1,\ldots,X_n$ and $f:[0,1]\to [0,\infty)$. In this section, we are interested in the estimation of the support $G$ of $f$, i.e., the closure of the set $\{x\in [0,1]^d:f(x)>0\}$. Throughout the section, we assume that $G$ is a convex body included in $[0,1]^d$. In \cite{Brunel2013}, the function $f$ is the indicator function of $G$: $f(x)=1$ for $x\in G$, $f(x)=0$ otherwise. The design points $X_1,\ldots,X_n$ are i.i.d., uniformly distributed in $[0,1]^d$ and the $\xi_i$'s are sub-Gaussian, i.e.,
$\DS \E\left[e^{t\xi_1}\right]\leq e^{\frac{\sigma^2 t^2}{2}}$, for all $t\in\R$, where $\sigma>0$ need not be known. \cite{Brunel2013} considers a least squares estimator $\DS \hat G_n\in \underset{C\in\mathcal N}{\operatorname{argmin}}\mbox{ } \mathcal A(C)$, where $\mathcal N$ is a $\DS n^{-2/(d+1)}$-net of $\KK$, with respect to the Nikodym metric and $\DS \mathcal A(C)=\sum_{i=1}^n (1-2Y_i)\mathds 1_{X_i\in C}$. The following upper bound is shown in \cite{Brunel2013}, in which $\PP_G$ stands for the joint distribution of the sample with $\DS f(\cdot)=\mathds 1_{\cdot\in G}$.

\begin{theorem}[\cite{Brunel2013}]
	There exist three positive constants $C_1, C_2, C_3$ that depend on $d$ and $\sigma^2$ and a positive integer $n_0$ that depends on $d$ only, such that the following holds:
	
	For all $G\in\KK$, all $n\geq n_0$ and all $x\geq 0$,
	$$|\hat G_n\triangle G|\leq C_1 n^{-\frac{2}{d+1}}+\frac{x}{n}$$
	with $\PP_G$ probability at least $\DS 1-C_2e^{-C_3 x}$.
\end{theorem}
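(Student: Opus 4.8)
The plan is to cast this as a uniform deviation estimate for an M-estimator over a finite net, exactly in the spirit of Section \ref{Sec:CovNum}. The key observation is that, writing $\mathcal A(C)=\sum_{i=1}^n(1-2Y_i)\mathds 1_{X_i\in C}$, the population version of $-\mathcal A(C)/n$ is minimized at $C=G$ when $f=\mathds 1_{\cdot\in G}$: indeed $\E_G[(1-2Y_i)\mathds 1_{X_i\in C}]=\E_G[(1-2\mathds 1_{X_i\in G})\mathds 1_{X_i\in C}]-2\E_G[\xi_i\mathds 1_{X_i\in C}]=\PP(X_i\in C\setminus G)-\PP(X_i\in C\cap G)$, using independence and mean-zero of $\xi_i$, and since $X_i$ is uniform on $[0,1]^d$ this equals $|C\setminus G|-|C\cap G|$. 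Thus $\E_G[\mathcal A(G)-\mathcal A(C)]=n\,|C\triangle G|$ after simplification, so the excess risk of any competitor $C$ with respect to the population criterion is precisely $n$ times its Nikodym distance to $G$. This is the quantity we want to control.

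First I would reduce to the net: since $\hat G_n\in\mathcal N$ minimizes $\mathcal A$, for every $C\in\mathcal N$ we have $\mathcal A(\hat G_n)\le\mathcal A(C)$, hence $|\hat G_n\triangle G|$ is bounded by the approximation error of the net plus the stochastic fluctuation $\sup_{C\in\mathcal N}\bigl|(\mathcal A(C)-\mathcal A(G))-\E_G[\mathcal A(C)-\mathcal A(G)]\bigr|/n$. Because $\mathcal N$ is an $n^{-2/(d+1)}$-net of $\KK$ in the Nikodym metric and the true $G\in\KK$ lies within that distance of some net element $G^*$, the bias contributes a term of order $n^{-2/(d+1)}$; by Theorem \ref{ThmBro} (combined with the domination of the Nikodym metric by the Hausdorff metric from \cite[Lemma 2]{Brunel2017}) we may take $\#\mathcal N\le c_1\exp\bigl(c_2 (n^{-2/(d+1)})^{-(d-1)/2}\bigr)=c_1\exp(c_2 n^{(d-1)/(d+1)})$.

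Next I would bound the stochastic term. Decompose $\mathcal A(C)-\mathcal A(G)=\sum_i(1-2Y_i)(\mathds 1_{X_i\in C}-\mathds 1_{X_i\in G})=\sum_i(1-2\mathds 1_{X_i\in G})(\mathds 1_{X_i\in C}-\mathds 1_{X_i\in G})-2\sum_i\xi_i(\mathds 1_{X_i\in C}-\mathds 1_{X_i\in G})$. The first sum is a centered sum of bounded i.i.d. terms whose variance is controlled by $|C\triangle G|$, so Bernstein's inequality gives a sub-exponential tail. The second sum, conditionally on the $X_i$'s, is a sub-Gaussian sum with variance proxy $\sigma^2\sum_i(\mathds 1_{X_i\in C}-\mathds 1_{X_i\in G})^2$, whose expectation is $n|C\triangle G|$; concentrating this empirical variance (again Bernstein, since the summands are Bernoulli) and then applying the sub-Gaussian tail yields another term of Bernstein type in $|C\triangle G|$. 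Taking a union bound over $\mathcal N$, whose log-cardinality is $O(n^{(d-1)/(d+1)})$, the deviation at level $t$ for a \emph{fixed} $C$ with $|C\triangle G|=v$ is roughly $\exp(-c\min(t^2/(nv),t))$, so after the union bound one needs $t\gtrsim \sqrt{nv\cdot n^{(d-1)/(d+1)}}+n^{(d-1)/(d+1)}+x$ to beat $a_2e^{-a_3x}$. Feeding this back into the basic inequality $n\,|\hat G_n\triangle G|\lesssim n\cdot n^{-2/(d+1)}+t$ with $v=|\hat G_n\triangle G|$ and solving the resulting quadratic in $\sqrt v$ (the AM--GM step $\sqrt{nv\cdot n^{(d-1)/(d+1)}}\le \tfrac{n}{2}v+\tfrac12 n^{(d-1)/(d+1)}$ absorbs the cross term into the left side) produces $|\hat G_n\triangle G|\lesssim n^{-2/(d+1)}+x/n$, which is the claim, with the constants depending on $d$ and $\sigma^2$ through the Bernstein and sub-Gaussian constants and the net cardinality.

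The main obstacle is the peeling/chaining bookkeeping in the stochastic term: a crude union bound over $\mathcal N$ with a single radius $v$ is not quite enough because the relevant $v=|\hat G_n\triangle G|$ is random, so one must either slice $\mathcal N$ into shells $\{C:2^{-k}\le|C\triangle G|\le 2^{-k+1}\}$ and union-bound within each shell (there are only $O(\log n)$ relevant shells, which costs nothing), or invoke a one-step localization argument. Getting the variance factor in the sub-Gaussian part right — i.e. replacing the random $\sum_i(\mathds 1_{X_i\in C}-\mathds 1_{X_i\in G})^2$ by its mean $n|C\triangle G|$ uniformly over the shell — is the one genuinely delicate point, but it is handled by exactly the same Bernstein argument applied to the $X_i$'s alone, and the contribution of its failure event is absorbed into $C_2e^{-C_3x}$. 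Everything else is the routine M-estimation machinery referenced in Section \ref{Sec:CovNum}.
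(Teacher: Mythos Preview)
The survey paper does not actually prove this theorem; it only states it and attributes the proof to \cite{Brunel2013}. Your proposal follows precisely the template the survey sketches in Section~\ref{Sec:CovNum} for the related density-support problem: Bronshtein's entropy bound for $\KK$, a basic inequality for the M-estimator over the finite net, a Bernstein-type concentration for the centered criterion, and a peeling step to localize the variance. This is the intended approach, and the outline is sound.

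Two small remarks. First, a sign slip: you wrote $\E_G[\mathcal A(G)-\mathcal A(C)]=n|C\triangle G|$, but since $\hat G_n$ \emph{minimizes} $\mathcal A$ and $\E_G[\mathcal A(C)]=n(|C|-2|C\cap G|)$, the correct identity is $\E_G[\mathcal A(C)-\mathcal A(G)]=n|C\triangle G|$; this does not affect the argument. Second, your treatment of the sub-Gaussian noise term is the right one, but be explicit that the conditional variance proxy is the \emph{random} quantity $\sigma^2\sum_i\mathds 1_{X_i\in C\triangle G}$, and that the extra Bernstein step controlling this empirical count uniformly over each shell is what forces the peeling (a single-radius union bound would indeed lose a factor). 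With those points made precise, the proof goes through exactly as you describe.
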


Of course, the estimator $\hat G_n$ is not computable in practice, since $\varepsilon$-nets of $\KK$ are not available. We believe that maximizing the functional $\mathcal A$ over the whole class $\KK$ would yield a similar upper bound, whose rate is proven to be minimax optimal. However, it would remain unclear how to compute the resulting estimator $\tilde G_n$. Nonetheless, note that for all $C\in\KK$, $\mathcal A(C)=\mathcal A(C^*)$, where $C^*$ is the convex hull of all the design points $X_i\in C$, implying that $\tilde G_n$ can be chosen to be a polytope whose vertices are design points. Another open question is whether $\hat G_n$ (or $\tilde G_n$) is adaptive to polytopal supports. In \cite{Brunel2013}, it is shown that the minimax rate on the class $\mathcal P_r$ of polytopes with at most vertices is of the order $(\ln n)/n$: Like in density support estimation, is the error of $\hat G_n$ (or $\tilde G_n$) of that order when the true support $G$ is a polytope, up to logarithmic factors?

Here, we have only discussed the case when $f$ is an indicator function, but more general models for $f$ are considered in \cite{KTlectureNotes1993}. All these models, though, impose a sharp separation condition on $f$, e.g., boundedness away from zero on its support, which essentially reduces to the case of indicator functions. To the best of our knowledge, harder cases, e.g., when $f$ satisfies a margin type condition, i.e., $\DS \left|\{x\in [0,1]^d:f(x)\leq \eta\} \right|\leq c\eta^\gamma$ for all $\eta>0$ small enough, where $c$ and $\gamma$ are positive constants, have not been tackled in the literature.

\section{Estimation of convex sets under the Hausdorff metric} \label{Sec:SupportFunction}

\subsection{Support functions and polyhedral representations of convex bodies} \label{Sec:PropSuppFunc}

Support functions play a central role in estimation of convex bodies under the Hausdorff metric. Indeed, the Hausdorff distance between two convex bodies $G_1$ and $G_2$ can be written in terms of their support functions $h_{G_1}$ and $h_{G_2}$:
\begin{lemma} \label{HausdorffSuppFunc}
For all convex bodies $G_1,G_2\in\K$, 
$$\textsf{d}_{\textsf{H}}(G_1,G_2)=\sup_{u\in\Sp}|h_{G_1}(u)-h_{G_2}(u)|.$$
\end{lemma}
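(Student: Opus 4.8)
The plan is to recall the basic fact that the support function of the Minkowski sum $G + \varepsilon B(0,1)$ is $h_G + \varepsilon$ (since $h_{B(0,1)} \equiv 1$ on $\Sp$), and to combine this with the order-reversing correspondence between set inclusion and pointwise inequality of support functions: for convex bodies $K, L$, one has $K \subseteq L$ if and only if $h_K(u) \le h_L(u)$ for all $u \in \Sp$. The ``only if'' direction is immediate from the definition of $h$; the ``if'' direction uses the fact that a closed convex set is the intersection of the closed halfspaces containing it, i.e., $L = \bigcap_{u \in \Sp}\{x : \langle u,x\rangle \le h_L(u)\}$, so $h_K \le h_L$ pointwise forces every such halfspace to contain $K$, hence $L \supseteq K$.

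With these two ingredients the argument is essentially bookkeeping. First I would write $\delta := \sup_{u\in\Sp}|h_{G_1}(u) - h_{G_2}(u)|$ and show $\textsf{d}_{\textsf{H}}(G_1,G_2) \le \delta$: for every $u$ we have $h_{G_1}(u) \le h_{G_2}(u) + \delta = h_{G_2 + \delta B(0,1)}(u)$, so by the inclusion--inequality correspondence $G_1 \subseteq G_2 + \delta B(0,1)$, and symmetrically $G_2 \subseteq G_1 + \delta B(0,1)$; by the definition of $\textsf{d}_{\textsf{H}}$ as an infimum this gives $\textsf{d}_{\textsf{H}}(G_1,G_2) \le \delta$. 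Conversely, to get $\textsf{d}_{\textsf{H}}(G_1,G_2) \ge \delta$, take any $\varepsilon \ge 0$ with $G_1 \subseteq G_2 + \varepsilon B(0,1)$ and $G_2 \subseteq G_1 + \varepsilon B(0,1)$; applying $h$ and the correspondence in the other direction yields $h_{G_1}(u) \le h_{G_2}(u) + \varepsilon$ and $h_{G_2}(u) \le h_{G_1}(u) + \varepsilon$ for all $u$, hence $|h_{G_1}(u) - h_{G_2}(u)| \le \varepsilon$ for all $u$, so $\delta \le \varepsilon$; taking the infimum over such $\varepsilon$ gives $\delta \le \textsf{d}_{\textsf{H}}(G_1,G_2)$. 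One should note that the infimum defining $\textsf{d}_{\textsf{H}}$ is attained (the set of admissible $\varepsilon$ is closed, since $G_i$ are compact), which makes the supremum genuinely equal to the Hausdorff distance rather than merely bounding it.

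I do not expect a serious obstacle here — the only point requiring a little care is justifying the inclusion--inequality correspondence, which rests on the separating hyperplane theorem (a closed convex set equals the intersection of its supporting halfspaces); everything else is a one-line manipulation. Since the paper treats itself as a survey and this is labeled ``elementary, yet essential,'' I would keep the proof to a few lines, citing \cite[Chapter 1]{Schneider1993} for the facts that $h_{G+\varepsilon B(0,1)} = h_G + \varepsilon$ and that inclusion of convex bodies is equivalent to pointwise domination of support functions, and then giving the short two-sided inequality argument above.
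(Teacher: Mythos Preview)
Your argument is correct and is the standard proof of this identity. Note, however, that the paper does not actually prove Lemma \ref{HausdorffSuppFunc}: it is stated as an ``elementary, yet essential result'' and the reader is referred to \cite{Schneider1993} for background on support functions. So there is no in-paper proof to compare against; your write-up is exactly the kind of short argument one finds in that reference, and your suggestion to cite \cite[Chapter 1]{Schneider1993} for the two basic facts (additivity of support functions under Minkowski sums and the equivalence $K\subseteq L \Leftrightarrow h_K\le h_L$) is precisely in line with what the paper does.
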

Here, we state a few results about support functions that are useful in estimation of convex sets, and we refer to \cite{Schneider1993} for more details on their account. 

Note that a convex body is completely determined by its support function and $G=\{x\in\R^d:\langle u,x\rangle\leq h_G(u), \forall u\in\Sp\}$, for all $G\in\K$. 

A function $h:\Sp\to\R$ is the support function of a convex set if and only if it is subadditive, in the following sense. 

\begin{definition}
	Let $h:\Sp\to\R$. Define the function $\tilde h$ as $\tilde  h(v)=\|v\|h(v/\|v\|)$ if $v\neq 0$, $\tilde h(0)=0$. We say that $h$ is subadditive if $\tilde h$ is convex. 
\end{definition}

If $h:\Sp\to\R$ is subadditive, then it is the support function of the convex set $\{x\in\R^d:\langle u,x\rangle\leq h(u),\forall u\in\Sp\}$.  

A polyhedral representation of a convex body is a way of writing it as the intersection of closed halfspaces or, equivalently, as a collection of affine constraints. For $\phi:\Sp\to\R$, we let $G_\phi=\{x\in\R^d:\langle u,x\rangle\leq\phi(u), \forall u\in\Sp\}$. It is easy to see that $h_{G_\phi}(u)\leq \phi(u)$, for all $u\in\Sp$. In general, the two functions are not equal, since $\phi$ is not necessarily subadditive. Subadditivity is actually a necessary and sufficient condition for $\phi$ to be a support function (see \cite[Proposition 1]{BrunelTDLS2017}):

\begin{lemma} \label{LemmaSubadd}
	Let $\phi:\Sp\to\R$. Then, $\phi=h_{G_\phi}$ if and only if $\phi$ is subadditive.
\end{lemma}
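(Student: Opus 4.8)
The plan is to prove the two implications separately, relying on the characterization of support functions already stated in the excerpt (a function $h:\Sp\to\R$ is a support function of a convex set precisely when it is subadditive in the sense of the definition given, i.e., its $1$-homogeneous extension $\tilde h$ is convex) together with the observation that $h_{G_\phi}\leq\phi$ always holds. For the easy direction, suppose $\phi$ is subadditive. Then by the result just recalled, $\phi$ is itself the support function of some convex set, and in fact of $G_\phi=\{x:\langle u,x\rangle\le\phi(u)\ \forall u\}$, because a convex body is recovered from its support function via exactly this intersection of halfspaces ($G=\{x:\langle u,x\rangle\le h_G(u)\ \forall u\}$). Hence $\phi=h_{G_\phi}$. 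This direction is essentially a restatement of the facts quoted above, so it costs almost nothing.

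For the converse, suppose $\phi=h_{G_\phi}$. Then $\phi$ is a support function (of the set $G_\phi$), and by the stated characterization every support function is subadditive; so $\phi$ is subadditive. If one prefers a self-contained argument not invoking that characterization as a black box, I would instead argue directly: $h_{G}(u)=\sup_{x\in G}\langle u,x\rangle$ is a supremum of linear functionals of $u$, hence its $1$-homogeneous extension $\tilde h_G(v)=\sup_{x\in G}\langle v,x\rangle$ is a supremum of linear functions of $v$ and therefore convex; thus $h_G$ is subadditive for any $G$, and in particular for $G=G_\phi$, which gives subadditivity of $\phi=h_{G_\phi}$.

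The only genuine subtlety — and the step I expect to need the most care — is making sure the two directions together do not secretly assume what is being proved: namely, one must be careful that $G_\phi$ can be empty or unbounded for a general $\phi$, so that "$\phi$ is a support function" is not automatic. The forward direction handles this: if $\phi=h_{G_\phi}$ then in particular $G_\phi$ is nonempty (its support function is finite-valued on all of $\Sp$), and finiteness of $\sup_{x\in G_\phi}\langle u,x\rangle$ in every direction forces $G_\phi$ to be bounded, so $G_\phi$ is a genuine (closed, bounded) convex set and the characterization of support functions applies to it. I would spell this out in one sentence. Everything else is a direct appeal to the standard facts about support functions quoted earlier (and available in \cite{Schneider1993}), so no lengthy computation is required.
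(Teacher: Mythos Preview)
The paper does not actually prove Lemma \ref{LemmaSubadd}; it is stated with a reference to \cite[Proposition 1]{BrunelTDLS2017}, and the surrounding text already asserts as standard facts (citing \cite{Schneider1993}) both that a function on $\Sp$ is a support function if and only if it is subadditive, and that a subadditive $h$ is the support function of $G_h$. Your argument is correct and simply strings these two facts together in the natural way: one direction is verbatim the sentence preceding the lemma, and the other follows because any support function is subadditive---your direct justification via a supremum of linear functionals is a clean, self-contained way to see this. Your remark about nonemptiness and boundedness of $G_\phi$ is a sensible sanity check, though strictly it is only needed so that $h_{G_\phi}$ is real-valued and the hypothesis $\phi=h_{G_\phi}$ is meaningful.
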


Actually, an interesting consequence of this result is the following.

\begin{lemma}
	Let $\phi:\Sp\to\R$. Then, $h_{G_\phi}$ is the largest subadditive function that is smaller or equal to $\phi$.
\end{lemma}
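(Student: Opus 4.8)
The plan is to show two things: first, that $h_{G_\phi}$ is subadditive and satisfies $h_{G_\phi}\le\phi$, so it is a candidate in the competition; second, that any subadditive $\psi$ with $\psi\le\phi$ must satisfy $\psi\le h_{G_\phi}$, so $h_{G_\phi}$ dominates all competitors.

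For the first part, recall that $h_{G_\phi}$ is the support function of the set $G_\phi=\{x\in\R^d:\langle u,x\rangle\le\phi(u),\ \forall u\in\Sp\}$, and the support function of any set is automatically subadditive (in the sense of the definition given before Lemma \ref{LemmaSubadd}, i.e.\ its positively homogeneous extension is convex): this is standard (see \cite[Chapter 1]{Schneider1993}). The inequality $h_{G_\phi}(u)\le\phi(u)$ for all $u\in\Sp$ was already noted in the excerpt right before Lemma \ref{LemmaSubadd}, since every $x\in G_\phi$ satisfies $\langle u,x\rangle\le\phi(u)$ by definition of $G_\phi$.

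For the second part, let $\psi:\Sp\to\R$ be subadditive with $\psi\le\phi$. By Lemma \ref{LemmaSubadd} applied to $\psi$, we have $\psi=h_{G_\psi}$, where $G_\psi=\{x\in\R^d:\langle u,x\rangle\le\psi(u),\ \forall u\in\Sp\}$. Since $\psi\le\phi$ pointwise, every affine constraint defining $G_\psi$ is at least as restrictive as the corresponding one defining $G_\phi$; hence $G_\psi\subseteq G_\phi$. Support functions are monotone under inclusion, so $h_{G_\psi}\le h_{G_\phi}$, i.e.\ $\psi\le h_{G_\phi}$. Combining the two parts, $h_{G_\phi}$ is subadditive, lies below $\phi$, and dominates every subadditive function lying below $\phi$, which is exactly the claim.

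The only mild subtlety — and the closest thing to an obstacle — is making sure the notion of subadditivity used here is the one for which Lemma \ref{LemmaSubadd} and the standard fact ``support functions are subadditive'' both apply; both refer to convexity of the positively homogeneous degree-one extension $\tilde h(v)=\|v\|h(v/\|v\|)$, so there is no mismatch. Everything else is the routine observation that $\phi\mapsto G_\phi$ reverses inclusions while $G\mapsto h_G$ preserves them.
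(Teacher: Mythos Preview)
Your proof is correct and follows essentially the same approach as the paper: both argue that a subadditive $\psi\le\phi$ satisfies $G_\psi\subseteq G_\phi$, hence $h_{G_\psi}\le h_{G_\phi}$, and then invoke Lemma~\ref{LemmaSubadd} to identify $\psi$ with $h_{G_\psi}$. You are slightly more explicit than the paper in spelling out the first part (that $h_{G_\phi}$ is itself subadditive and $\le\phi$), which the paper leaves implicit from the surrounding text, but the argument is the same.
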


\begin{proof}
	Let $g:\Sp\to\R$ be a subadditive function with $g(u)\leq \phi(u)$, for all $u\in\Sp$. Then, $G_g\subseteq G_\phi$. As a consequence, $h_{G_g}(u)\leq h_{G_\phi}(u)$, for all $u\in\Sp$. By Lemma \ref{LemmaSubadd}, since $g$ is subadditive, $h_{G_g}=g$, yielding $g(u)\leq h_{G_\phi}(u)$ for all $u\in\Sp$. 
\end{proof}

By Lemma \ref{HausdorffSuppFunc}, the Hausdorff distance between two convex sets $G_\phi$ and $G_\psi$ can be written in terms of $h_{G_\phi}$ and $h_{G_\psi}$. However, these support functions may not be easy to compute in terms of $\phi$ and $\psi$. The following lemma provides a partial solution to this issue when both $\phi$ and $\psi$ are continuous. 

\begin{lemma}[\cite{BrunelTDLS2017}] \label{ThmTDLS1}

Let $\phi,\psi:\Sp\to\R$ be two continuous functions. Assume that $G_\phi$ and $G_{\psi}$ have nonempty interiors. Moreover, let $R>r>0$ and assume that $B(a,r)\subseteq G_\phi\subseteq B(a,R)$, for some $a\in\R^d$. Let $\eta=\max_{u\in\Sp}|\psi(u)-\phi(u)|$. If $\eta<r$, then $\DS \textsf{d}_{\textsf{H}}(G_{\psi},G_\phi)\leq \frac{\eta R}{r}\frac{1+\eta/r}{1-\eta/r}$.

\end{lemma}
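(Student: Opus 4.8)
The plan is to control the two inclusions that define the Hausdorff distance separately, using the polyhedral representations $G_\phi = \{x : \langle u,x\rangle \le \phi(u)\ \forall u\in\Sp\}$ and similarly for $G_\psi$, together with the fact that $\phi$ and $\psi$ are within $\eta$ of each other uniformly. By translating, I may assume $a=0$, so $B(0,r)\subseteq G_\phi\subseteq B(0,R)$. The first, easier direction is $G_\psi \subseteq G_\phi + \varepsilon B(0,1)$ for a suitable $\varepsilon$: since $\psi \le \phi + \eta$ pointwise, every point $x\in G_\psi$ satisfies $\langle u,x\rangle \le \phi(u)+\eta$ for all $u$, i.e.\ $x \in G_{\phi+\eta}$. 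The set $G_{\phi+\eta}$ is a mild ``outer dilation'' of $G_\phi$; because $B(0,r)\subseteq G_\phi$, a shift of the supporting halfspace outward by $\eta$ corresponds, relative to the origin, to scaling by a factor controlled by $\eta/r$, so one gets $G_{\phi+\eta} \subseteq (1+\eta/r) G_\phi$, hence $G_\psi \subseteq (1+\eta/r)G_\phi$, which yields $G_\psi \subseteq G_\phi + \tfrac{\eta}{r}R\, B(0,1)$ using $G_\phi\subseteq B(0,R)$.

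The reverse inclusion $G_\phi \subseteq G_\psi + \varepsilon B(0,1)$ is the more delicate one and is where the extra factor $\tfrac{1+\eta/r}{1-\eta/r}$ will appear. The natural route is: from $\phi \le \psi + \eta$ we get $G_\phi \subseteq G_{\psi+\eta}$, so it suffices to show $G_{\psi+\eta}\subseteq (1-\eta/r)^{-1}$-type dilation of $G_\psi$. For this I need a lower bound on how large $G_\psi$ is — specifically that $G_\psi$ contains a ball $B(0,r')$ with $r'$ not much smaller than $r$. Since $\phi = h_{G_\phi}\ge r$ on $\Sp$ (because $B(0,r)\subseteq G_\phi$) and $\psi \ge \phi - \eta \ge r-\eta > 0$, the set $G_\psi$ contains $B(0, r-\eta)$: indeed for $\|x\|\le r-\eta$ we have $\langle u,x\rangle \le r-\eta \le \psi(u)$ for all $u$. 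Now the same ``inner-ball-controls-outer-shift'' argument applied to $G_\psi$ with inner radius $r-\eta$ gives $G_{\psi+\eta}\subseteq \bigl(1 + \tfrac{\eta}{r-\eta}\bigr) G_\psi = \tfrac{r}{r-\eta} G_\psi$. Combining, $G_\phi \subseteq \tfrac{r}{r-\eta}G_\psi$, and since $G_\psi \subseteq G_\phi \subseteq B(0,R)$ we obtain $G_\phi \subseteq G_\psi + \bigl(\tfrac{r}{r-\eta}-1\bigr)R\, B(0,1) = G_\psi + \tfrac{\eta R}{r-\eta} B(0,1)$.

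Putting the two directions together, $\textsf{d}_{\textsf{H}}(G_\psi, G_\phi) \le \max\bigl(\tfrac{\eta R}{r}, \tfrac{\eta R}{r-\eta}\bigr) = \tfrac{\eta R}{r-\eta} = \tfrac{\eta R}{r}\cdot\tfrac{1}{1-\eta/r} \le \tfrac{\eta R}{r}\cdot\tfrac{1+\eta/r}{1-\eta/r}$, which is the claimed bound (the final, slightly lossy, inequality just rewrites it in the stated symmetric-looking form). The one technical point to get right — the main obstacle — is the elementary geometric lemma that if $B(0,\rho)\subseteq C$ for a closed convex set $C = \{x:\langle u,x\rangle\le g(u)\}$, then $\{x:\langle u,x\rangle\le g(u)+\delta\} \subseteq (1+\delta/\rho)C$. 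This is proved by noting that for $x$ in the enlarged set and any $u\in\Sp$, $\langle u, \tfrac{x}{1+\delta/\rho}\rangle \le \tfrac{g(u)+\delta}{1+\delta/\rho}$, and checking $\tfrac{g(u)+\delta}{1+\delta/\rho}\le g(u)$, which reduces to $\delta \le (\delta/\rho) g(u)$, i.e.\ $g(u)\ge \rho$ — true because $B(0,\rho)\subseteq C$ forces $h_C(u)=g(u)\ge\rho$ (here one uses that $g$ may be replaced by $h_C$ without changing $C$, or argues directly). Continuity of $\phi,\psi$ and nonempty interiors are used only to ensure these are genuine convex bodies and that $h_{G_\phi}=\phi$ is unnecessary — in fact the argument only needs the pointwise inequalities and the sandwiching balls, so it goes through cleanly.
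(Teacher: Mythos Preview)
Your overall strategy is sound and is essentially the argument one finds in \cite{BrunelTDLS2017} (the paper under review only cites the lemma without proof). However, there is one genuine slip in the second direction.

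You write ``since $G_\psi \subseteq G_\phi \subseteq B(0,R)$'', but $G_\psi \subseteq G_\phi$ is not true in general: $\psi$ may exceed $\phi$ in some directions. What you \emph{do} know, from your first direction, is $G_\psi \subseteq (1+\eta/r)\,G_\phi \subseteq B\bigl(0,\,R(1+\eta/r)\bigr)$. Plugging this correct outer radius into your dilation bound gives
\[
G_\phi \;\subseteq\; \frac{r}{r-\eta}\,G_\psi \;=\; G_\psi + \frac{\eta}{r-\eta}\,G_\psi \;\subseteq\; G_\psi + \frac{\eta}{r-\eta}\cdot R\Bigl(1+\frac{\eta}{r}\Bigr)\,B(0,1),
\]
and $\dfrac{\eta R(1+\eta/r)}{r-\eta} = \dfrac{\eta R}{r}\cdot\dfrac{1+\eta/r}{1-\eta/r}$ is \emph{exactly} the stated bound. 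So the factor $\tfrac{1+\eta/r}{1-\eta/r}$ is not a ``slightly lossy'' cosmetic rewriting, as you suggest at the end; it is forced by the correct outer radius for $G_\psi$. With this fix, your proof is complete and matches the intended argument. (Your auxiliary lemma is fine: $B(0,\rho)\subseteq C=G_g$ gives $g(u)\ge h_C(u)\ge \rho$, since $h_{G_g}\le g$ always.)
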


In this lemma, the mapping $\psi$ plays the role of an estimate of $\phi$. Moreover, a control of the estimation error of $\phi$ in sup-norm yields a control of the estimation error of $G_{\phi}$ in the Hausdorff metric. However, in certain cases, it may not be easy to control the accuracy of the estimation of $\phi(u)$ for all $u\in\Sp$ simultaneously, but instead, only on a finite subset of $\Sp$. Recall that for $\varepsilon\in (0,1)$, an $\varepsilon$-net of $\Sp$ is a subset $\mathcal N$ of $\Sp$ such that for all $u\in\Sp$, there is $u^*\in\mathcal N$ with $\|u-u^*\|\leq\varepsilon$. For a subset $\mathcal N$ of $\Sp$ and a function $\psi:\mathcal N\to\R$, let $G_{\psi}^{\mathcal N}=\{x\in\R^d:\langle u,x\rangle\leq\psi(u), \forall u\in\mathcal N\}$. Then, the following result is complementary to the previous lemma, when $\phi=h_{G_\phi}$ (i.e., by Lemma \ref{LemmaSubadd}, when $\phi$ is subadditive) and when $\psi$ is only defined on a (fine enough) discretization of $\Sp$.

\begin{lemma}[\cite{BrunelTDLS2017}] \label{ThmTDLSdiscrete}

Let $G\in\K$ and $\psi:\Sp\to\R$. Let $\varepsilon\in (0,1)$ and $\mathcal N$ be an $\varepsilon$-net of $\Sp$. Let $R>r>0$ and assume that $B(a,r)\subseteq G\subseteq B(a,R)$, for some $a\in\R^d$. Let $\eta=\max_{u\in\mathcal N}|\psi(u)-h_G(u)|$. Then, if $\eta<r$,
$\DS \textsf{d}_{\textsf{H}}(G_{\psi}^{\mathcal N},G)\leq \frac{\eta R}{r}\frac{1+\eta/r}{1-\eta/r}+\frac{2R\varepsilon}{1-\varepsilon}$. 

\end{lemma}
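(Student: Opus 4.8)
The plan is to reduce the discrete situation to the continuous one already handled by Lemma \ref{ThmTDLS1}. The obstacle is that $\psi$ is only defined on the finite net $\mathcal N$, so $G_\psi^{\mathcal N}$ is cut out by finitely many halfspaces, and we must quantify how much larger $G_\psi^{\mathcal N}$ can be than the set we would get if we had access to $\psi$ on all of $\Sp$. First I would extend the data on $\mathcal N$ to a genuine function on $\Sp$: define $\phi:\Sp\to\R$ by $\phi(u)=h_G(u)+\eta$ for every $u\in\Sp$, which is continuous (indeed a support function, being $h_G$ shifted outward) and satisfies $\phi(u)\ge\psi(u)$ for all $u\in\mathcal N$ by definition of $\eta$. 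Consequently $G_\psi^{\mathcal N}\subseteq G_\phi^{\mathcal N}$; but since $\phi$ is defined on all of $\Sp$, comparing $G_\phi^{\mathcal N}$ to $G_\phi$ is exactly the question of how a net-discretized polyhedral representation approximates the full one.

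The second step is to bound $\textsf{d}_{\textsf{H}}(G_\phi^{\mathcal N},G_\phi)$, where $\phi=h_G+\eta$. Clearly $G_\phi\subseteq G_\phi^{\mathcal N}$ since we drop constraints, so it suffices to show $G_\phi^{\mathcal N}\subseteq G_\phi + \delta B(0,1)$ for $\delta=\frac{2R\varepsilon}{1-\varepsilon}$ (or a comparable quantity). Take $x\in G_\phi^{\mathcal N}$; I want to show $\langle u,x\rangle\le \phi(u)$ fails by at most a controlled amount for every $u\in\Sp$. Given $u\in\Sp$, pick $u^*\in\mathcal N$ with $\|u-u^*\|\le\varepsilon$; then $\langle u,x\rangle = \langle u^*,x\rangle + \langle u-u^*,x\rangle \le \phi(u^*) + \varepsilon\|x\|$. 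Using $B(a,r)\subseteq G\subseteq B(a,R)$ one controls $\|x\|$: points of $G_\phi^{\mathcal N}$ lie within roughly $R/(1-\varepsilon)$ of $a$ (this is where the rolling-ball/inradius hypothesis is used, analogously to the argument behind Lemma \ref{ThmTDLS1}), and $\phi$ is Lipschitz with a constant controlled by $R$, so $|\phi(u^*)-\phi(u)|$ is $O(R\varepsilon)$ as well. Combining, $\langle u,x\rangle \le \phi(u) + c R\varepsilon/(1-\varepsilon)$ for all $u\in\Sp$, which says precisely that $x$ is within Hausdorff distance $\frac{2R\varepsilon}{1-\varepsilon}$ of $G_\phi$ after the appropriate bookkeeping.

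The final step assembles the pieces by the triangle inequality for $\textsf{d}_{\textsf{H}}$:
$$\textsf{d}_{\textsf{H}}(G_\psi^{\mathcal N},G)\le \textsf{d}_{\textsf{H}}(G_\psi^{\mathcal N},G_\phi) + \textsf{d}_{\textsf{H}}(G_\phi,G).$$
For the first term, since $G_\psi^{\mathcal N}\subseteq G_\phi^{\mathcal N}$ and $G_\phi\subseteq G_\psi^{\mathcal N}$ (because $\phi\ge h_G$ forces $G=G_{h_G}\subseteq G_\phi\subseteq G_\psi^{\mathcal N}$ — one must check $G_\phi\subseteq G_\psi^{\mathcal N}$, which holds as $\phi(u^*)\ge h_G(u^*)+\eta\ge\psi(u^*)$... wait, this inequality goes the wrong way, so instead I would bound $\textsf{d}_{\textsf{H}}(G_\psi^{\mathcal N},G_\phi)\le\textsf{d}_{\textsf{H}}(G_\phi^{\mathcal N},G_\phi)$ directly using $G_\phi\subseteq G_\psi^{\mathcal N}\subseteq G_\phi^{\mathcal N}$, valid since dropping constraints only enlarges and $\psi\le\phi$ on $\mathcal N$), this is at most the $\frac{2R\varepsilon}{1-\varepsilon}$ from Step 2. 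For the second term, $G_\phi = \{x:\langle u,x\rangle\le h_G(u)+\eta\}$ is exactly the outer parallel-type body obtained by applying Lemma \ref{ThmTDLS1} with this $\phi$ and $\psi=h_G$ (whose sup-distance is $\eta<r$), giving $\textsf{d}_{\textsf{H}}(G_\phi,G)\le\frac{\eta R}{r}\frac{1+\eta/r}{1-\eta/r}$. Adding the two bounds yields the claim. The main obstacle is the uniform radius bound on $G_\psi^{\mathcal N}$ in Step 2: a priori an omitted constraint could let $G_\psi^{\mathcal N}$ stick out arbitrarily far, and ruling this out requires carefully using that the nearby constraints in $\mathcal N$ (which do bind) together with the inclusion $G\subseteq B(a,R)$ pen the set in — this is the same phenomenon that makes the hypothesis $\eta<r$ and the inradius $r$ essential.
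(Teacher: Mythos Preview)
The paper does not supply its own proof of this lemma (it is quoted from \cite{BrunelTDLS2017}), so there is no in-text argument to compare against; I comment only on the soundness of your proposal.

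Your assembly step contains a genuine gap. You assert the sandwich $G_\phi \subseteq G_\psi^{\mathcal N} \subseteq G_\phi^{\mathcal N}$ with $\phi = h_G + \eta$, but the left inclusion is false: for $u^* \in \mathcal N$ you only know $\psi(u^*) \geq h_G(u^*) - \eta$, so the $\psi$-constraint can be \emph{strictly tighter} than the $\phi$-constraint, and a point of $G_\phi = G + \eta B(0,1)$ need not satisfy $\langle u^*, x\rangle \leq \psi(u^*)$. (You notice this yourself mid-argument, but the attempted repair simply reasserts the same inclusion, justifying it by ``dropping constraints only enlarges'' --- which is the argument for $G_\phi \subseteq G_\phi^{\mathcal N}$, not for $G_\phi \subseteq G_\psi^{\mathcal N}$.) A concrete counterexample: $G = B(0,1)$, $\eta = 1/2$, $\psi(u^*) = h_G(u^*) - \eta = 1/2$ at some $u^* \in \mathcal N$; then $(3/2)u^* \in G_\phi$ but $(3/2)u^* \notin G_\psi^{\mathcal N}$.

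The consequence is that your argument only controls the \emph{outer} half of the Hausdorff distance, i.e., how far $G_\psi^{\mathcal N}$ can protrude beyond $G$. The \emph{inner} half --- how far $G$ can protrude beyond $G_\psi^{\mathcal N}$ when $\psi < h_G$ on $\mathcal N$ --- is never addressed, and this is exactly where the factor $\frac{\eta R}{r}\frac{1+\eta/r}{1-\eta/r}$ (rather than merely $\eta$) must appear: shaving a cap of depth $\eta$ from $G$ can remove points whose distance to what remains exceeds $\eta$, with the excess governed by $R/r$. Your final invocation of Lemma~\ref{ThmTDLS1} is applied to the pair $(G_\phi, G)$, but there $\textsf{d}_{\textsf{H}}(G_\phi, G) = \eta$ exactly, so the nontrivial factor is being manufactured in the wrong place. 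A workable repair is to run the construction symmetrically: introduce $\phi^- = h_G - \eta$ as well, use the valid sandwich $G_{\phi^-}^{\mathcal N} \subseteq G_\psi^{\mathcal N} \subseteq G_{\phi}^{\mathcal N}$, and control $\textsf{d}_{\textsf{H}}(G, G_{\phi^-})$ via a Lemma~\ref{ThmTDLS1}-type argument (this is where $\eta < r$ and the inradius are genuinely needed); alternatively, triangulate through the intermediate polytope $G_{h_G}^{\mathcal N}$ and bound $\textsf{d}_{\textsf{H}}(G_{h_G}^{\mathcal N}, G_\psi^{\mathcal N})$ and $\textsf{d}_{\textsf{H}}(G_{h_G}^{\mathcal N}, G)$ separately.
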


Finally, the next lemma provides a polyhedral representation of the convex hull of a finite collection of points in the space. It is straightforward, given that a linear function defined on a convex body is necessarily maximized at an extreme point, but it yields a useful representation of random polytopes.

\begin{lemma} \label{PolyhedralCH}
	Let $x_1,\ldots,x_n\in\R^d$ and let $K_n$ be their convex hull. Then, 
	$$h_{K_n}(u)=\max_{1\leq i\leq n}\langle u,x_i\rangle, \quad \forall u\in\Sp.$$
	In particular, $\DS K_n=\{x\in\R^d:\langle u,x\rangle\leq \max_{1\leq i\leq n}\langle u,x_i\rangle, \forall u\in\Sp\}$. 
\end{lemma}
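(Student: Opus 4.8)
The plan is to prove Lemma~\ref{PolyhedralCH} in two halves: first establish the formula $h_{K_n}(u)=\max_{1\le i\le n}\langle u,x_i\rangle$ for every $u\in\Sp$, and then deduce the polyhedral representation of $K_n$ from the general fact (recalled earlier in the excerpt) that $G=\{x\in\R^d:\langle u,x\rangle\le h_G(u),\ \forall u\in\Sp\}$ for any convex body $G$.

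For the support-function identity, fix $u\in\Sp$. Since each $x_i\in K_n$, we immediately get $\langle u,x_i\rangle\le h_{K_n}(u)$ for all $i$, hence $\max_{1\le i\le n}\langle u,x_i\rangle\le h_{K_n}(u)$. For the reverse inequality, take an arbitrary $x\in K_n$ and write it as a convex combination $x=\sum_{i=1}^n\lambda_i x_i$ with $\lambda_i\ge0$ and $\sum_i\lambda_i=1$; then by linearity $\langle u,x\rangle=\sum_i\lambda_i\langle u,x_i\rangle\le\big(\sum_i\lambda_i\big)\max_j\langle u,x_j\rangle=\max_j\langle u,x_j\rangle$. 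Taking the supremum over $x\in K_n$ gives $h_{K_n}(u)\le\max_{1\le i\le n}\langle u,x_i\rangle$, which together with the first inequality yields equality. (One could phrase the reverse direction as the standard observation that a linear functional on a compact convex set attains its maximum at an extreme point, and the extreme points of $K_n$ form a subset of $\{x_1,\dots,x_n\}$; the convex-combination computation is just the elementary incarnation of this.)

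For the second assertion, note that $K_n$ is a convex body in the sense needed — it is compact and convex as the convex hull of finitely many points — so applying the representation $K_n=\{x\in\R^d:\langle u,x\rangle\le h_{K_n}(u),\ \forall u\in\Sp\}$ and substituting the formula just proved for $h_{K_n}(u)$ gives $K_n=\{x\in\R^d:\langle u,x\rangle\le\max_{1\le i\le n}\langle u,x_i\rangle,\ \forall u\in\Sp\}$, as claimed. If one prefers a self-contained argument avoiding the general representation theorem: the inclusion $K_n\subseteq\{x:\langle u,x\rangle\le\max_i\langle u,x_i\rangle\ \forall u\}$ is exactly the first inequality above; for the reverse inclusion, if $x\notin K_n$ then by the separating hyperplane theorem there is a $u\in\Sp$ and a scalar $c$ with $\langle u,x\rangle>c\ge\langle u,y\rangle$ for all $y\in K_n$, in particular $\langle u,x\rangle>\max_i\langle u,x_i\rangle$, so $x$ fails the constraints.

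There is no real obstacle here: the statement is elementary and the proof is a two-line computation plus an invocation of results already on the page. The only point requiring a moment's care is making sure the ``convex body'' hypotheses under which the representation $G=\{x:\langle u,x\rangle\le h_G(u)\}$ was stated are met — $K_n$ need not have nonempty interior if the $x_i$ lie in a lower-dimensional affine subspace — which is why the self-contained separation argument is worth keeping in reserve; but for the intended application to random polytopes $\hat K_n$ with $n\ge d+1$ sample points in general position this is a non-issue.
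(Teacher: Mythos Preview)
Your proof is correct and follows exactly the approach the paper indicates: the paper does not give a formal proof but simply remarks that the lemma is ``straightforward, given that a linear function defined on a convex body is necessarily maximized at an extreme point,'' which is precisely the observation you spell out via the convex-combination computation. Your treatment of the second assertion (including the self-contained separation argument to cover the degenerate case where $K_n$ has empty interior) is a nice extra bit of care that the paper omits.
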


Of course, this polyhedral representation is not optimal: since $K_n$ is a polytope, Minkowski--Weyl theorem for polyhedra states that only a finite number of affine constraints should be sufficient to describe the set $K_n$. Moreover, these constraints correspond to the normal vectors of the $(d-1)$-dimensional faces of $K_n$: All other affine constraints are redundant. However, in practice, finding the $(d-1)$-dimensional faces of the convex hull of a given finite collection of points is a hard problem (see, e.g., \cite{BremnerFukudaMarzetta1998}).

\subsection{Estimation of support functions}

Sometimes, in order to estimate a convex body $G$, it can be natural to estimate its support function $h_G$. The previous lemmas indicate that if the estimation error are measured with respect to the Hausdorff metric, then it is enough to bound the pointwise or sup-norm error of the estimation of $h_G$. 

In this section, $G$ is an unknown convex body and we assume that an estimator $\hat h_n$ of $h_G$ has been extracted from available data. Then, $G_{\hat h_n}$ is a natural estimator of $G$. We review examples and show how the results and properties stated in the previous section apply, by distinguishing two cases: when $\hat h_n$ is subadditive, hence, when it is the support function $G_{\hat h_n}$, and when it is not.

\subsubsection{\bf When $\hat h_n$ is subadditive} 

\mbox{ }\vspace{2mm}

This is the case, for instance, in density support estimation: A natural estimator of the support $G$ of i.i.d. random points $X_1,\ldots,X_n$ is $\DS \hat h_n(u)=\max_{1\leq i\leq n}\langle u,X_i\rangle$, $u\in\Sp$. By Lemma \ref{PolyhedralCH}, $\hat h_n$ is actually the support function of $\hat K_n$, hence, it is subadditive.

\begin{lemma} \label{lemmaGeneralHausdorffTechnique}
Let $G\in\KK$. Let $\hat h_n$ be a subadditive estimator of $h_G$ and let $\hat G_n=G_{\hat h_n}$. 
Let $\delta,\eta\in (0,1)$ and assume that $|\hat h_n(u)-h_G(u)|\leq \eta$ with probability at least $1-\delta$, for all $u\in\Sp$. Then,
$\textsf{d}_{\textsf{H}}(\hat G_n,G)\leq 2\eta$ with probability at least $1-2(18/\eta)^d\delta$.
\end{lemma}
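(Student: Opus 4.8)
\emph{Proof proposal.} The plan is to convert the Hausdorff distance into a sup‑norm distance between support functions, and then upgrade the given \emph{pointwise} control of $\hat h_n-h_G$ into a \emph{uniform} one by an $\varepsilon$‑net argument on $\Sp$. First, since $\hat h_n$ is subadditive, Lemma \ref{LemmaSubadd} gives $\hat h_n=h_{\hat G_n}$ with $\hat G_n=G_{\hat h_n}$; moreover $\hat G_n$ is a nonempty compact convex set, being the convex set whose support function is the finite, positively $1$‑homogeneous convex extension $\tilde{\hat h}_n$ of $\hat h_n$. (In the density‑support instance mentioned in the text, $\hat h_n(u)=\max_i\langle u,X_i\rangle$ and $\hat G_n=\hat K_n$ by Lemma \ref{PolyhedralCH}.) Lemma \ref{HausdorffSuppFunc}, valid for any pair of nonempty compact convex sets, then yields
$$\textsf{d}_{\textsf{H}}(\hat G_n,G)=\sup_{u\in\Sp}\bigl|\hat h_n(u)-h_G(u)\bigr|,$$
so it suffices to bound this supremum by $2\eta$ off an event of probability at most $2(18/\eta)^d\delta$.

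Next I would fix $\varepsilon=\eta/6$ and an $\varepsilon$‑net $\mathcal N$ of $\Sp$ with $\#\mathcal N\le 2(3/\varepsilon)^d=2(18/\eta)^d$ (a standard covering bound for the sphere). Let $E$ be the event that $|\hat h_n(u^\ast)-h_G(u^\ast)|\le\eta$ simultaneously for all $u^\ast\in\mathcal N$; by the hypothesis and a union bound, $\PP(E)\ge 1-\#\mathcal N\,\delta\ge 1-2(18/\eta)^d\delta$. On $E$ the crucial point is that $\hat G_n$ lies in an explicit ball: for $x\in\hat G_n\setminus\{0\}$, writing $\bar x=x/\|x\|$ and choosing $u^\ast\in\mathcal N$ with $\|\bar x-u^\ast\|\le\varepsilon$, the defining constraint $\langle u^\ast,x\rangle\le\hat h_n(u^\ast)\le h_G(u^\ast)+\eta\le 1+\eta$ (using $G\subseteq B(0,1)$, so $h_G\le 1$) together with $\langle\bar x-u^\ast,x\rangle\le\varepsilon\|x\|$ gives $\|x\|=\langle\bar x,x\rangle\le 1+\eta+\varepsilon\|x\|$, hence $\hat G_n\subseteq B\bigl(0,(1+\eta)/(1-\varepsilon)\bigr)$. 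Consequently, on $E$ the function $\hat h_n=h_{\hat G_n}$ is $L$‑Lipschitz on $\Sp$ with $L=(1+\eta)/(1-\varepsilon)$, while $h_G$ is $1$‑Lipschitz on $\Sp$ since $G\subseteq B(0,1)$.

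Finally, on $E$, for an arbitrary $u\in\Sp$ pick $u^\ast\in\mathcal N$ with $\|u-u^\ast\|\le\varepsilon$ and combine the two Lipschitz bounds with the net control at $u^\ast$:
$$\bigl|\hat h_n(u)-h_G(u)\bigr|\le\bigl|\hat h_n(u)-\hat h_n(u^\ast)\bigr|+\bigl|\hat h_n(u^\ast)-h_G(u^\ast)\bigr|+\bigl|h_G(u^\ast)-h_G(u)\bigr|\le L\varepsilon+\eta+\varepsilon=\eta+(L+1)\varepsilon.$$
A short computation shows $L+1<18/5$ for $\eta\in(0,1)$ and $\varepsilon=\eta/6$, so $(L+1)\varepsilon<(18/5)(\eta/6)<\eta$, giving $|\hat h_n(u)-h_G(u)|<2\eta$; taking the supremum over $u$ and invoking the first paragraph concludes. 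The one step needing care is the boundedness estimate for $\hat G_n$: because $\hat h_n$ is assumed only subadditive, $\hat G_n$ need not sit in a fixed ball a priori, so the radius must be extracted from the net event itself — and there is no circularity, since that estimate uses only the values $\hat h_n(u^\ast)$, $u^\ast\in\mathcal N$, that define $E$. The covering estimate for $\Sp$, the Lipschitz continuity of support functions, and the final arithmetic are routine.
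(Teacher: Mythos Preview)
Your proof is correct but follows a genuinely different route from the paper's. The paper invokes a series decomposition due to Fresen--Vitale (their Lemma~\ref{LemmaFresen}): every $u\in\Sp$ is written as $u=u_0+\sum_{k\ge1}\varepsilon_k u_k$ with $u_k$ in the net and $0\le\varepsilon_k\le\varepsilon^k$; subadditivity of both $\hat h_n$ and $h_G$ is then applied term by term, which is why the paper needs control on the \emph{symmetrized} net $\mathcal N\cup(-\mathcal N)$ (the decomposition produces $-u_k$'s when bounding from below). You instead first extract, on the net event $E$, an explicit radius bound $\hat G_n\subseteq B\bigl(0,(1+\eta)/(1-\varepsilon)\bigr)$ from a single well-chosen net direction, and then exploit the resulting Lipschitz constant of $\hat h_n=h_{\hat G_n}$ to pass from net control to uniform control via a one-step triangle inequality. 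Your argument is more elementary in that it avoids the auxiliary decomposition lemma entirely and uses only the standard fact that the support function of a set contained in $B(0,R)$ is $R$-Lipschitz; it also does not need the symmetrized net, so in fact your union bound could drop the factor $2$. The paper's approach, on the other hand, never needs to establish a radius for $\hat G_n$ and works purely at the level of subadditive functions, which may be cleaner to adapt when the ambient geometry is less explicit.
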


\begin{proof}

The proof of this lemma relies on two fundamental facts. First, by a standard volumetric argument, there exists an $\varepsilon$-net of $\Sp$ of cardinality at most $(3/\varepsilon)^d$, for all $\varepsilon\in (0,1)$. Second, the following result, which is Lemma 5.2 in \cite{FresenVitale2014}, is a very elegant tool to work with approximations of the unit sphere. 

\begin{lemma}[\cite{FresenVitale2014}] \label{LemmaFresen}
	Let $\varepsilon\in (0,1)$ and let $\mathcal N$ be an $\varepsilon$-net of $\Sp$. Then, for all $u\in\Sp$, there are sequences $\DS (u_k)_{k\geq 0}\subseteq\mathcal N$ and $\DS (\varepsilon_k)_{k\geq 1}\subseteq \R$ such that $\DS u=u_0+\sum_{k=1}^\infty \varepsilon_k u_k$, with $\DS 0\leq \varepsilon_k\leq\varepsilon^k, \quad \forall k\geq 1$.
\end{lemma}

	Let $\varepsilon=\eta/6$ and let $\mathcal N$ be an $\varepsilon$-net of $\Sp$. Denote by $-\mathcal N=\{-u:u\in\mathcal N\}$. Let $\mathcal A$ be the event when $|\hat h_n(u)-h_G(u)|\leq\eta$, simultaneously for all $u\in\mathcal N\cup (-\mathcal N)$. By a union bound,
\begin{equation*}
	\PP[\mathcal A] \geq 1-\sum_{u\in\mathcal N\cup (-\mathcal N)}\PP[|\hat h_n(u)-h_G(u)|\leq \eta] \geq 1-2(\#\mathcal N)\delta \geq 1-2(18/\eta)^d\delta.
\end{equation*}
Now, assume that $\mathcal A$ is satisfied and let $u\in\Sp$. Then, with the notation of Lemma \ref{LemmaFresen}, using subadditivity of support functions and the fact that $h_G\leq 1$ (since $G\subseteq B(0,1)$),
\begin{align*}
	\hat h_n(u) & \geq \hat h_n(u_0)-\sum_{k\geq 1} \varepsilon^k\hat h_n(-u_k) \geq h_G(u_0)-\eta-\sum_{k\geq 1} \varepsilon^k(h_G(-u_k)+\eta) \\
	& \geq h_G(u_0)-\eta-2\sum_{k\geq 1} \varepsilon^k = h_G(u_0)-\eta-\frac{2\varepsilon}{1-\varepsilon} \\
	& \geq h_G(u)-\sum_{k\geq 1}\varepsilon^k h_G(u_k)-\eta-\frac{2\varepsilon}{1-\varepsilon} \geq h_G(u)-\eta-\frac{3\varepsilon}{1-\varepsilon} \geq h_G(u)-2\eta
\end{align*}
and similarly, $\DS	h_G(u)\geq \hat h_n(u)-2\eta$. Thus, if $\mathcal A$ is satisfied, then $\textsf{d}_{\textsf{H}}(\hat G_n,G)=\sup_{u\in\Sp}|\hat h_n(u)-h_G(u)|\leq 2\eta$, which finishes the proof of the lemma.
\end{proof}

This technique is applied in \cite{BrunelHausdorff2017} for support estimation under smoothness conditions (see Assumption \ref{AssumptionHausdorff} below), where ideas from \cite{DumbgenWalther1996} are refined in order to obtain non-asymptotic deviation inequalities. The general framework considered in that work is the following. Let $G\in\KK$ and $\mu$ a probability measure on $\R^d$, satisfying the following assumption. 

\begin{assumption} \label{AssumptionHausdorff}
For all $u\in\R^d$ and all $t\in [0,r]$, $\mu(C_G(u,t))\geq Lt^\alpha$, where:
\begin{itemize}
	\item $r, L, \alpha$ are given positive numbers with $r<1$;
	\item $C_G(u,t)$ is the cap of $G$ in the direction $u$ and with height $t$, i.e., 
			$$C_G(u,t)=\{x\in G: \langle u,x\rangle \geq h_G(u)-t\}.$$
\end{itemize}
\end{assumption}

Particular cases of such pairs $(G,\mu)$ include uniform distributions on general convex bodies ($\alpha=d$) or smooth convex bodies ($\alpha=(d+1)/2$), uniform distributions on the boundary of smooth convex bodies ($\alpha=(d-1)/2$), linear projections of uniform distributions that are supported on higher dimensional smooth convex bodies, distributions with densities supported on a convex body with polynomial decay near the boundary, etc. 

It is easy to show that under Assumption \ref{AssumptionHausdorff}, for all $u\in\Sp$ and all $t\in [0,r]$, $|\hat h_n(u)-h_G(u)|\leq t$ with probability at least $1-e^{-Lnt^\alpha}$. Hence, the following theorem is shown, using Lemma \ref{lemmaGeneralHausdorffTechnique}. Set $\DS \tau_\alpha=\begin{cases} 1 \mbox{ if } \alpha\geq 1 \\ 2^{\alpha-1} \mbox{  if } 0<\alpha<1. \end{cases}$

\begin{theorem}[\cite{BrunelHausdorff2017}] \label{ThmHausdorff2017}
	Let $r\in (0,1)$ and $L,\alpha>0$. There exists a positive constant $C$ such that the following holds. Set $\DS a_n=\left(\frac{C\ln n}{n}\right)^\frac{1}{\alpha}$ and $\DS b_n=n^{\frac{-1}{\alpha}}$. Let $G\in\KK$ and $\mu$ be a probability measure on $\R^d$ and let Assumption \ref{AssumptionHausdorff} hold. Then, for all $x\geq 0$ such that $\displaystyle{a_n+b_n x\leq r}$,
	$$\textsf{d}_{\textsf{H}}(\hat K_n,G)\leq 2a_n + 2b_n x$$
with $\mu$-probability at least $\DS 1-12^d\exp\left(-C_\alpha L x^\alpha\right)$.
\end{theorem}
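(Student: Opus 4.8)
The plan is to combine the pointwise deviation bound for $\hat h_n(u) = h_{\hat K_n}(u) = \max_{1\le i\le n}\langle u, X_i\rangle$ under Assumption \ref{AssumptionHausdorff} with the discretization machinery of Lemma \ref{lemmaGeneralHausdorffTechnique}. First I would establish the pointwise estimate: for a fixed $u\in\Sp$ and $t\in[0,r]$, the event $\{\hat h_n(u) < h_G(u) - t\}$ is precisely the event that no sample point falls in the cap $C_G(u,t)$, which has $\mu$-probability $\le (1-\mu(C_G(u,t)))^n \le (1 - Lt^\alpha)^n \le e^{-Lnt^\alpha}$; since $\hat h_n(u)\le h_G(u)$ always (because $\hat K_n\subseteq G$), this gives $|\hat h_n(u) - h_G(u)|\le t$ with $\mu$-probability at least $1 - e^{-Lnt^\alpha}$.

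Next I would apply Lemma \ref{lemmaGeneralHausdorffTechnique} with the choice $\eta = \eta(x) := a_n + b_n x$ (assumed $\le r$ so the pointwise bound is valid) and $\delta = \delta(x) := e^{-Ln\eta^\alpha}$. Since $\hat h_n$ is subadditive by Lemma \ref{PolyhedralCH}, the lemma yields $\textsf{d}_{\textsf{H}}(\hat K_n, G)\le 2\eta = 2a_n + 2b_n x$ with $\mu$-probability at least $1 - 2(18/\eta)^d\, e^{-Ln\eta^\alpha}$. It then remains to bound the failure probability $2(18/\eta)^d e^{-Ln\eta^\alpha}$ by $12^d \exp(-C_\alpha L x^\alpha)$ for a suitable choice of the constant $C$ in $a_n = (C\ln n/n)^{1/\alpha}$.

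The main bookkeeping step — and where care is needed — is this last estimate. Write $n\eta^\alpha = n(a_n + b_n x)^\alpha$. Using the elementary inequality $(s+t)^\alpha \ge \tau_\alpha^{-1}(s^\alpha + t^\alpha)$ (which is exactly why $\tau_\alpha$ is defined as it is: $\tau_\alpha=1$ for $\alpha\ge 1$ by convexity/superadditivity of $s\mapsto s^\alpha$, and $\tau_\alpha = 2^{\alpha-1}$ for $0<\alpha<1$), one gets $n\eta^\alpha \ge \tau_\alpha^{-1}(n a_n^\alpha + n b_n^\alpha x^\alpha) = \tau_\alpha^{-1}(C\ln n + x^\alpha)$ since $n a_n^\alpha = C\ln n$ and $n b_n^\alpha = 1$. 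Hence $e^{-Ln\eta^\alpha}\le n^{-LC/\tau_\alpha}\, e^{-(L/\tau_\alpha)x^\alpha}$. For the polynomial prefactor, note $\eta \ge a_n = (C\ln n/n)^{1/\alpha}$, so $(18/\eta)^d \le 18^d (n/(C\ln n))^{d/\alpha} \le C' n^{d/\alpha}$ for a constant $C'$ depending on $d,\alpha,C$ (absorbing the $\ln n$ into a power of $n$ for $n$ large, or treating it as $\le 1$). Therefore the failure probability is at most $2C' n^{d/\alpha - LC/\tau_\alpha}\, e^{-(L/\tau_\alpha)x^\alpha}$, and choosing $C$ large enough that $LC/\tau_\alpha \ge d/\alpha$ plus enough slack to swallow the constant $2C'$ into $12^d$ (or simply into a constant $C_\alpha$ in front of $L x^\alpha$ after possibly shrinking) makes this $\le 12^d \exp(-C_\alpha L x^\alpha)$, as claimed. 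The delicate points are thus: (i) getting the right form of the superadditivity inequality with the constant $\tau_\alpha$, and (ii) absorbing the $n^{d/\alpha}$ prefactor into the exponential decay in $n$ coming from the $C\ln n$ term — this is what forces $C$ to depend on $d$, $\alpha$, and $L$ and is the only genuinely quantitative part of the argument; everything else is a direct assembly of Lemmas \ref{lemmaGeneralHausdorffTechnique} and \ref{PolyhedralCH} with the elementary cap-probability bound.
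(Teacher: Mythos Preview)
Your approach is exactly the one the paper sketches: the pointwise cap bound plus Lemma \ref{lemmaGeneralHausdorffTechnique}, followed by absorbing the $(18/\eta)^d$ prefactor using the $C\ln n$ piece of $a_n$. One small slip to fix: the elementary inequality should be $(s+t)^\alpha \ge \tau_\alpha\,(s^\alpha + t^\alpha)$, not $\tau_\alpha^{-1}$ --- for $0<\alpha<1$ one has $(s+t)^\alpha \le s^\alpha + t^\alpha$, so the constant must be $\le 1$, and indeed $\tau_\alpha = 2^{\alpha-1}$ is the sharp value; this yields $e^{-Ln\eta^\alpha}\le n^{-LC\tau_\alpha}e^{-L\tau_\alpha x^\alpha}$ and identifies $C_\alpha=\tau_\alpha$ in the final bound, with the rest of your bookkeeping unchanged.
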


As a consequence, under Assumption \ref{AssumptionHausdorff}, $\hat K_n$ satisfies $\textsf{d}_{\textsf{H}}(\hat K_n,G)=O_{\PP}\left(\left(\frac{\ln n}{n}\right)^{-1/\alpha}\right)$.

\begin{remark}
\begin{itemize}

	\item \cite{BrunelHausdorff2017} uses a different technique in order to bound the error of $\hat K_n$ when $G$ is a polytope that satisfies a standardness condition. Using the technique shown above would yield the rate $\DS\left((\ln n)/n\right)^{1/d}$ ($\alpha=d$), which is suboptimal. Indeed, the following inequality is shown when $G$ is a polytope with at most $p$ vertices ($p\geq d+1$) and satisfies a $\nu$-standardness condition, for some $\nu\in (0,1)$:
	$$\PP_G\left[n^{1/d}\textsf{d}_{\textsf{H}}(K,\hat K_n)\geq x\right] \leq pe^{-\nu\beta_d x^d}, \forall x\geq 0.$$
		
	\item Surprisingly, the adaptive feature of $\hat K_n$ disappears under the Hausdorff metric. As discussed in Section \ref{Sec:RandomPolytopes}, $\hat K_n$ adapts to polytopal supports under the Nikodym metric, and its rate of convergence in that metric is the fastest when $G$ is a polytope. On the contrary, the rate of $\hat K_n$ in the Hausdorff metric is the worst when the support $G$ is a polytope.

\end{itemize}
\end{remark}

Another model where the same ideas could be applied has been considered in \cite{GardnerKiderlenMilanfar2006,Guntuboyina2012}. Given independent observations $Y_i=h_G(u_i)+\varepsilon_i$, where $u_i\in\mathbb S^{d-1}$ is either deterministic or random and $\varepsilon_i$ is a zero mean error term, independent of $u_i$, \cite{Guntuboyina2012} estimates $h_G$ by the least squares estimator on the class of all subadditive functions $h:\Sp\to\R$. This produces an estimator $\hat h_n$ that is a support function itself. In general, $L^2$-type metrics are natural to measure the performance of least squares estimators. \cite{GardnerKiderlenMilanfar2006,Guntuboyina2012} prove rates of convergence of their estimator with respect to the $L^2$-distance, with minimax optimality proven in \cite{Guntuboyina2012}. However, the $L^2$-distance between the support functions of convex bodies does not translate into a natural and geometric measurement of a distance between the convex bodies themselves. \cite{GardnerKiderlenMilanfar2006} uses elegant norm inequalities for subadditive functions, which show that the $L^\infty$ distance is dominated by the $L^2$ distance in some sense (see \cite[Proposition 2.3.1]{Groemer1996} and \cite[Proposition 2.2]{GardnerKiderlenMilanfar2006}). In turn, they obtain error bounds for the $L^\infty$ metric between the support functions, and hence, for the Hausdorff distance between the convex sets. Nevertheless, as remarked in \cite{Guntuboyina2012}, these error bounds are very loose, and the optimal rate of estimation of $G\in\KK$ with respect to the Hausdorff distance in that model is still an unsolved problem.

\subsubsection{\bf When the estimator $\hat h_n$ is not subadditive} \label{Sec:TukeyAndStuff}

\mbox{ }\vspace{2mm}

There are many problems where an estimator of an unknown convex body $G$ is defined as $\hat G_n=G_{\hat h_n}$ for some estimator $\hat h_n$ of $h_G$, that is not necessarily subadditive. Lemmas \ref{ThmTDLS1} and \ref{ThmTDLSdiscrete} are two key results to deal with this case. Lemma \ref{ThmTDLS1} is useful when $\hat h_n$ is continuous almost surely and its error can be bounded uniformly on the unit sphere $\Sp$. Indeed, if $|\hat h_n(u)-h_G(u)|\leq \eta$ simultaneously for all $u\in\Sp$ with high probability, then, if $G$ satisfies the assumptions of Lemma \ref{ThmTDLS1} and $\eta$ is small enough, $\textsf{d}_{\textsf{H}}(G_{\hat h_n},G)\leq\eta'$ with high probability, where $\eta'$ is a small number that depends on $\eta$ and other parameters. We apply Lemma \ref{ThmTDLSdiscrete} in two cases: when the true support function $h_G$ cannot be described simply, i.e., when $G=G_\phi$ for some function $\phi$ that is not necessarily subadditive, and when the error of $\hat h_n$ can only be controlled pointwise, i.e., for each $u\in\Sp$ separately. By a union bound, pointwise deviations transfer to uniform deviations on $\varepsilon$-nets of $\Sp$. We give two examples, borrowed from \cite{BrunelTDLS2017} and \cite{BrunelKlusowski2017}, where these results are used. \vspace{2mm}

\paragraph{Multivariate quantile estimation: level sets of the Tukey depth} \cite{BrunelTDLS2017}
\mbox{ }\vspace{2mm}

As already pointed out in \cite{fisher1966convex,fisher1969limiting,FresenVitale2014}, the study of the convex hull of a cloud of points is a multivariate extension of extreme value theory. Indeed, by Lemma \ref{PolyhedralCH}, the support function of $\hat K_n$ in any direction $u\in\Sp$ is given by the maximum of i.i.d. real random variables. In this regard, the definition of the random polytope $\hat K_n$ can be extended to that of the $k$-hull of the sample, for $0\leq k\leq n-1$. For each direction $u\in\Sp$, instead of considering the extreme statistic $\DS \max_{i=1,\ldots,n}\langle u,X_i\rangle$, take the $k$-th order statistics $X_{(k)}(u)$, where $X_i(u)=\langle u,X_i\rangle$ and $X_{(1)}(u)\geq X_{(2)}(u)\geq\ldots\geq X_{(n)}(u)$ is the reordered list of $X_1(u),\ldots,X_n(u)$ in nonincreasing order. The \textit{$k$-hull} of $X_1,\ldots,X_n$ is the convex set $\hat K_n^{(k)}=\{x\in\R^d:\langle u,x\rangle \leq X_{(k)}(u), \forall u\in\Sp\}$ \cite{ColeSharirYap1987}. The convex hull is the $0$-hull. In \cite{BrunelTDLS2017}, $k$-hulls are used as estimators of the Tukey depth level sets of probability measures in $\R^d$.

Let $\mu$ be a probability measure on $\R^d$. The Tukey depth $D_\mu(x)$ of a point $x\in\R^d$ with respect to $\mu$ is defined as the smallest probability mass of a closed halfspace containing $x$: 
\begin{equation*}
	D_\mu(x)=\inf_{x\in H\in\mathcal H_d} \mu(H),
\end{equation*}
where $\mathcal H_d$ is the collection of closed half-spaces in $\R^d$. For $\alpha\in (0,1)$, the $\alpha$-level set of $D_\mu$ is the set $G=\{x\in\R^d:D_\mu(x)\geq\alpha\}$. It is a closed convex set and it has the following polyhedral representation (see \cite[Theorem 2]{KongMizera2012} and \cite[Lemma 1]{BrunelTDLS2017}):
\begin{equation} \label{PolyhTDLS}
	G=\{x\in\R^d: \langle u,x\rangle \leq q_u, \forall u\in\Sp\},
\end{equation}
where $q_u$ is the upper $(1-\alpha)$-quantile of $\langle u,X\rangle$ with $X\sim\mu$. Let $X_1,\ldots,X_n$ be i.i.d. random points with distribution $\mu$ and let $\mu_n$ be the corresponding empirical measure: $\DS \mu_n(A)=\frac{1}{n}\sum_{i=1}^n\mathds 1_{X_i\in A}$, for all Borel sets $A\subseteq \R^d$. The empirical $\alpha$-level set of the Tukey depth is $\hat G_n=\{x\in\R^d: D_{\mu_n}(x)\geq \alpha\}$, which also has a simple polyhedral representation: $\DS \hat G_n=\{x\in\R^d: \langle u,x\rangle \leq \hat q_u, \forall u\in\Sp\}$, where $\hat q_u=\sup\{t\in\R: \#\{i=1,\ldots, n: \langle u,X_i\rangle\geq t\}\geq n\alpha\}$ is the empirical upper $(1-\alpha)$-quantile of the sample $\langle u,X_1\rangle,\ldots,\langle u,X_n\rangle$, for all $u\in\Sp$. With the above notation, $\hat q_u= X_{(k)}(u)$, for $k=\lceil n\alpha\rceil$, where, for $x\in\R$, $\lceil x\rceil$ stands for the smallest integer larger or equal to $x$. Hence, the empirical level set $\hat G_n$ of the Tukey depth coincides with the $k$-hull of $X_1,\ldots,X_n$. At the population level, there is another very important connection to the theory of random polytopes: As shown in \cite{BrunelTDLS2017}, if $\mu$ is the uniform distribution on a convex body $K\in\K$, the population level set $G$ coincides with the $\alpha$-floating body of $K$, as defined in Section \ref{Subsec:RandomPol}. 

In general, neither $u\mapsto q_u$ nor $u\mapsto \hat q_u$ are subadditive, and hence, they are not the support functions of the sets $G$ and $\hat G_n$ respectively. However, $u\mapsto \hat q_u$ is always continuous, as a consequence of \cite[Lemma 15]{BrunelTDLS2017} and $u\mapsto q_u$ is continuous under some weak assumptions on $\mu$ (see \cite[Lemma 11]{BrunelTDLS2017}). Building on standard results from empirical process theory, \cite{BrunelTDLS2017} shows that under some assumption on $\mu$, $\sup_{u\in\Sp}|\hat q_u-q_u|$ is small, with high probability. Hence, using Lemma \ref{ThmTDLSdiscrete}, this yields the following result.

\begin{theorem} \cite[Corollary 2]{BrunelTDLS2017} \label{TheoremTDLSMain}

	Let $\mu$ have a density $f$ with respect to the Lebesgue measure and let $0<\alpha<\max_{x\in\R^d}D_\mu(x)$. Assume that $f$ satisfies one of the following conditions:
	\begin{enumerate}
	
		\item $f$ is continuous and positive everywhere and there exist $C>0$ and $\nu>d-1$ such that $|f(x)|\leq C(1+\|x\|)^{-\nu}, \quad \forall x\in\R^d$.
		
		\item $f$ is supported on a bounded convex set and is uniformly continuous on its support.
	
	\end{enumerate}
	
	Then, $\DS \textsf{d}_{\textsf{H}}(\hat G_n,G)=O_{\PP}\left(n^{-1/2}\right)$.

\end{theorem}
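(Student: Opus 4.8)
The plan is to reduce the statement, via the polyhedral representations of $G$ and $\hat G_n$ recorded in \eqref{PolyhTDLS}, to two ingredients: (i) a uniform-in-direction control of the deviation between the empirical upper quantile $\hat q_u$ and the population upper quantile $q_u$, of order $n^{-1/2}$; and (ii) a deterministic stability estimate turning a sup-norm bound on $\hat q-q$ into a Hausdorff bound on $\hat G_n-G$ — here Lemma~\ref{ThmTDLS1} (or its discretized analogue Lemma~\ref{ThmTDLSdiscrete}) does the job, since $u\mapsto\hat q_u$ is always continuous by \cite[Lemma~15]{BrunelTDLS2017} and $u\mapsto q_u$ is continuous under either hypothesis on $f$ by \cite[Lemma~11]{BrunelTDLS2017}.

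I would first record the elementary geometry of the target. Since $0<\alpha<\max_{x\in\R^d}D_\mu(x)$ and $\mu$ is absolutely continuous, the Tukey depth $D_\mu$ is continuous, so there is $x_0$ with $D_\mu(x_0)>\alpha$ and a whole ball around $x_0$ has depth $\ge\alpha$, i.e.\ lies in $G$; and under either hypothesis (polynomial tail decay in case~1, bounded support in case~2) one has $D_\mu(x)<\alpha$ for $\|x\|$ large, so $G$ is compact. Hence $B(a,r)\subseteq G\subseteq B(a,R)$ for some $0<r<R$ and $a\in\R^d$. Note also that, since $h_G(u)\le q_u$, the event $\{\eta_n\le r/2\}$ with $\eta_n:=\sup_{u\in\Sp}|\hat q_u-q_u|$ forces $B(a,r/2)\subseteq\hat G_n$, so $\hat G_n$ has nonempty interior there, which is needed to invoke the stability lemma.

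The probabilistic core is to show $\eta_n=O_{\PP}(n^{-1/2})$. The family of closed halfspaces $\{x:\langle u,x\rangle\ge t\}$, $u\in\Sp$, $t\in\R$, is a VC class, so standard empirical process bounds give $\Delta_n:=\sup_{u,t}|\mu_n(\langle u,X\rangle\ge t)-\mu(\langle u,X\rangle\ge t)|=O_{\PP}(n^{-1/2})$. To pass from this to a bound on quantiles I would establish a uniform modulus estimate: there are $c>0$ and $\delta_0>0$ such that the density $g_u$ of $\langle u,X\rangle$ satisfies $g_u(t)\ge c$ for every $u\in\Sp$ and every $t$ with $|t-q_u|\le\delta_0$. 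Given this, inverting $\mu(\langle u,X\rangle\ge q_u)=\alpha$ yields $|\hat q_u-q_u|\le c^{-1}(\Delta_n+n^{-1})$ for all $u$ as soon as $\Delta_n+n^{-1}\le c\,\delta_0$, hence $\eta_n\le c^{-1}(\Delta_n+n^{-1})=O_{\PP}(n^{-1/2})$. This modulus estimate is the crux and the place where the hypotheses on $f$ are genuinely used, and it is the step I expect to be the main obstacle: in case~2 convexity of the support forces, uniformly in $u$, the level-$t$ cross-sections near $q_u$ to have $(d-1)$-dimensional measure bounded below and to carry a uniformly positive fraction of $f$ — because the $\alpha$-cap $\{x:\langle u,x\rangle\ge q_u\}$ has volume bounded away from $0$ and $\infty$ and $q_u$ is bounded away from the maximum of $\langle u,\cdot\rangle$ over the support; in case~1 positivity of $f$ together with the tail bound keeps $q_u$ in a fixed compact range and bounds $g_u$ from below there.

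Finally I would apply the deterministic step. On the event $\{\eta_n\le r/2\}$, whose probability tends to one, Lemma~\ref{ThmTDLS1} with $\phi=q$ and $\psi=\hat q$ (both continuous, $G_\phi=G$ sandwiched as above, $G_\psi=\hat G_n$ with nonempty interior) gives $\textsf{d}_{\textsf{H}}(\hat G_n,G)\le\frac{\eta_n R}{r}\frac{1+\eta_n/r}{1-\eta_n/r}\le\frac{3R}{r}\,\eta_n$. Alternatively, one may bound the one-dimensional quantile deviations $|\hat q_u-q_u|$ only on a deterministic $\varepsilon$-net of $\Sp$ with $\varepsilon\asymp n^{-1/2}$ — controlled by a union bound of pointwise exponential inequalities — and apply Lemma~\ref{ThmTDLSdiscrete}, whose additional term $2R\varepsilon/(1-\varepsilon)$ is then also $O(n^{-1/2})$. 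Combining with the previous paragraph yields $\textsf{d}_{\textsf{H}}(\hat G_n,G)=O_{\PP}(n^{-1/2})$, as claimed.
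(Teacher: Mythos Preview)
Your proposal is correct and follows essentially the same route as the paper: establish continuity of $u\mapsto q_u$ and $u\mapsto\hat q_u$, use empirical process theory (VC class of halfspaces plus a uniform lower bound on the projected densities near $q_u$) to get $\sup_{u\in\Sp}|\hat q_u-q_u|=O_{\PP}(n^{-1/2})$, and then feed this into the deterministic stability Lemma~\ref{ThmTDLS1}. The only cosmetic difference is that the paper nominally cites Lemma~\ref{ThmTDLSdiscrete} for the last step while you lead with Lemma~\ref{ThmTDLS1}; since you already have a uniform bound over the whole sphere and both $q$ and $\hat q$ are continuous, Lemma~\ref{ThmTDLS1} is indeed the more natural tool, and you correctly mention the discretized alternative as well.
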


More precise deviation inequalities are proven in \cite{BrunelTDLS2017} but here, we only state the result in this form for simplicity of the development. Note that any log-concave distribution satisfies one of the two conditions imposed on $f$ in this theorem and, as a particular case, the uniform distribution on a convex body $K\in\K$ satisfies the second condition. Computational questions are tackled in Section \ref{Sec:CurseOfDimension}. \vspace{2mm}

\paragraph{Estimation of the convex support of a density from noisy observations} \cite{BrunelKlusowski2017}
\mbox{ }\vspace{2mm}

The last model that we present here is an extension to density support estimation, where the observations are contaminated with some additive noise. Consider a convex body $G\in\K$ and let $X_1,\ldots,X_n$ be i.i.d. random points uniformly distributed in $G$. Assume that only $Y_i=X_i+\xi_i, i=1,\ldots,n$, are observed, where the $\xi_i$'s are i.i.d. centered Gaussian vectors with covariance matrix $\sigma^2 I_d$ ($I_d$ is the identity matrix and $\sigma^2>0$ is known), independent of the $X_i$'s. For $u\in\Sp$, a consistent estimator of $h_G(u)$ is given by $\hat h_n(u)=\max_{1\leq i\leq n} \langle u,Y_i\rangle -b_n$, where $b_n\approx \sqrt{2\sigma^2\ln n}$ is a deterministic debiasing term, and the set estimator of $G$ is defined as $\hat G_n=G_{\hat h_n}$. By extending some results on extreme value statistics \cite{Goldenshluger2004}, \cite{BrunelKlusowski2017} proves that $|\hat h_n(u)-h_G(u)|$ is small with high probability, for every single $u\in\Sp$.  However, due to the constant term $b_n$ in its definition, $\hat h_n$ is not subadditive in general. In \cite{BrunelKlusowski2017}, Lemma \ref{ThmTDLSdiscrete} is used in order to show that 
$$\textsf{d}_{\textsf{H}}(\hat G_n,G)=O_{\PP}\left(\frac{\ln \ln n}{\sqrt{\ln n}}\right).$$

A stronger deviation inequality is proven in \cite{BrunelKlusowski2017}, which uses the same ideas described above. Moreover, building on techniques developped in \cite{Wasserman2012}, it is also shown that the minimax rate of estimation of $G$ in this model is very slow and that $\hat G_n$ is nearly rate optimal in a minimax sense.

\section{Approximating convex set estimators: A statistical/computational trade-off} \label{Sec:CurseOfDimension}

The computational complexity of a set estimator can be very large, and some estimators may not even be computable in practice. For instance, computing the convex hull of $n$ points in $\R^d$ requires approximately $n^{d/2}$ operations (see, e.g., \cite{Chazelle1993}, and \cite{barber1996quickhull} for a description of the \textit{Quickhull} algorithm, implemented by the function \texttt{convhulln} in the \textbf{R} package \texttt{geometry}), which is not doable if the dimension is too large. Note, though, that in many applications that require the estimation of a set, the dimension is typically small: In spatial data analysis such as home range estimation, convex hulls or local convex hulls \cite{getz2004local}, or $\alpha$-hulls \cite{burgman2003bias} are computed with $d=2$; In econometrics, the efficient boundary problem deals with the feasible productivity domain of a firm, and $d$ is roughly the number of inputs at the firm \cite{simar2000statistical}; In image reconstruction, e.g., from satellite data \cite{Rodriguez2016}, $d=2$, typically. As for the empirical level sets of the Tukey depth, aka $k$-hulls \cite{ColeSharirYap1987}, there is no available algorithm to compute them in high dimensions. As we have seen above (Lemma \ref{PolyhedralCH} and \eqref{PolyhTDLS}), $k$-hulls (including convex hulls, for $k=0$) can be easily written using infinitely many affine constraints. When $k=0$, those constraints define a subadditive function, which turns out to be the support function of the random polytope, by Lemma \ref{LemmaSubadd}. n Section \ref{Sec:TukeyAndStuff}, when $k\geq 1$, we saw that in general, the function defined by these constraints is not subadditive, hence, the support function of the $k$-hull is not simply determined by the univariate order statistics $X_{(k)}(u), u\in\Sp$.

The idea exploited in \cite{BrunelHausdorff2017} for convex hulls and in \cite{BrunelTDLS2017} for $k$-hulls ($k\geq 1$) is to select a finite number of these constraints, leading to a larger approximating polytope, using a discretization of the unit sphere. One must bear in mind the purpose of approximating these sets, which is the estimation of an underlying set: The support of a density, or a population level set of the Tukey depth. Hence, an acceptable approximation error may be as large as the statistical error of the initial estimator. 

In order to discretize $\Sp$, the idea is to sample enough independent random unit vectors. The following lemma, proven in \cite{BrunelTDLS2017}, shows that with high probability this procedure provides an $\varepsilon$-net of $\Sp$, where $\varepsilon$ depends on the number of sampled unit vectors.

\begin{lemma}[\cite{BrunelTDLS2017}]

	Let $M$ be a positive integer and $U_1,\ldots,U_M$ be i.i.d. uniform vectors in $\Sp$. Let $\varepsilon\in (0,1)$ and let $\mathcal C$ be the event satisfied when the collection $\{U_1,\ldots,U_M\}$ is an $\varepsilon$-net of $\Sp$. Then, 
$$\PP[\mathcal C]\geq 1-6^d\exp\left(-\frac{M\varepsilon^{d-1}}{2d8^{\frac{d-1}{2}}}+d\ln\left(\frac{1}{\varepsilon}\right)\right).$$

\end{lemma}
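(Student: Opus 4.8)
The plan is to reduce the statement to a covering argument on the sphere: the event $\mathcal C$ fails precisely when some point $u \in \Sp$ is at distance more than $\varepsilon$ from every $U_i$, and the probability of this can be controlled by a net-plus-union-bound scheme. First I would fix a small auxiliary radius, say $\delta = \varepsilon/2$, and invoke the standard volumetric estimate (the same one used in the proof of Lemma \ref{lemmaGeneralHausdorffTechnique}): there exists a $\delta$-net $\mathcal D = \{v_1,\dots,v_N\}$ of $\Sp$ with $N \le (3/\delta)^d = (6/\varepsilon)^d$. The reduction step is then: if every $v_j \in \mathcal D$ lies within distance $\varepsilon/2$ of some $U_i$, then every $u \in \Sp$ lies within distance $\varepsilon/2 + \delta = \varepsilon$ of some $U_i$ by the triangle inequality, so $\{U_1,\dots,U_M\}$ is an $\varepsilon$-net. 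Hence it suffices to bound the probability that some $v_j$ has no $U_i$ in its $(\varepsilon/2)$-ball.

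Next I would estimate, for a fixed $v \in \Sp$, the probability $p$ that a single uniform $U \in \Sp$ lands in the spherical cap $\{w \in \Sp : \|w - v\| \le \varepsilon/2\}$. This is the normalized surface measure of a cap, and a routine lower bound (comparing the cap to an inscribed $(d-1)$-dimensional ball, or using the known asymptotics of cap measures) gives something of the form $p \ge c_d (\varepsilon/2)^{d-1}$ for an explicit dimensional constant; matching the form of the bound in the statement, one takes $p \ge \varepsilon^{d-1}/(2 d\, 8^{(d-1)/2})$ or a comparable expression. Then the probability that none of $U_1,\dots,U_M$ falls in that cap is $(1-p)^M \le e^{-pM}$. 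A union bound over the $N$ points of $\mathcal D$ yields
\[
\PP[\mathcal C^c] \;\le\; N e^{-pM} \;\le\; \left(\frac{6}{\varepsilon}\right)^d \exp\!\left(-\frac{M\varepsilon^{d-1}}{2 d\, 8^{(d-1)/2}}\right) \;=\; 6^d \exp\!\left(-\frac{M\varepsilon^{d-1}}{2 d\, 8^{(d-1)/2}} + d\ln\frac{1}{\varepsilon}\right),
\]
which is exactly the claimed inequality after taking complements.

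The main obstacle — really the only nontrivial point — is getting the spherical cap lower bound with the precise constant $2 d\, 8^{(d-1)/2}$ in the exponent and a leading factor that collapses to exactly $6^d$ rather than $(6/\varepsilon)^d \cdot (\text{junk})$; this requires being slightly careful about how the $\varepsilon^{d-1}$ and the $(1/\varepsilon)^d$ combine. One clean route is to choose the auxiliary net radius and the cap radius so that the $\varepsilon^{d}$ from the net cardinality and an $\varepsilon^{-1}$ factor absorbed from the cap measure leave precisely $d\ln(1/\varepsilon)$ inside the exponential and $6^d$ outside; another is to bound the cap measure from below by Euclidean-ball volume ratios, using $|B_{d-1}(0,t)| = \beta_{d-1} t^{d-1}$ and the surface area of $\Sp$, and then simplify the resulting constant into the stated closed form. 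Everything else is the standard volumetric-net argument already deployed elsewhere in the paper.
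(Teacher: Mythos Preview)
The paper does not actually contain a proof of this lemma: it is merely quoted from \cite{BrunelTDLS2017} and used as a black box in Section~\ref{Sec:CurseOfDimension}. So there is no in-paper argument to compare against, and your proposal has to be judged on its own merits.

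On those merits, your outline is correct and is exactly the standard route one would expect: fix an $(\varepsilon/2)$-net $\mathcal D$ of $\Sp$ of size at most $(6/\varepsilon)^d$, observe via the triangle inequality that $\{U_1,\dots,U_M\}$ is an $\varepsilon$-net as soon as every $v\in\mathcal D$ has some $U_i$ within Euclidean distance $\varepsilon/2$, lower bound the normalized surface measure of a cap of Euclidean radius $\varepsilon/2$ by a quantity of the form $\varepsilon^{d-1}/(2d\,8^{(d-1)/2})$, and finish by the inequality $(1-p)^M\le e^{-pM}$ together with a union bound over $\mathcal D$. Your algebra at the end is also right: $(6/\varepsilon)^d e^{-pM}=6^d\exp\!\big(-pM+d\ln(1/\varepsilon)\big)$ is exactly the stated bound, so your worry about ``collapsing to exactly $6^d$'' is already resolved by what you wrote.

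The one place you leave genuinely open is the explicit cap bound. It is elementary: for $d\ge 3$, writing the cap $\{w:\|w-v\|\le\varepsilon/2\}=\{w:\langle w,v\rangle\ge 1-\varepsilon^2/8\}$ and using $(1-s^2)^{(d-3)/2}\ge(1-s)^{(d-3)/2}$ on $[0,1]$ in the numerator and $(1-s^2)^{(d-3)/2}\le 1$ in the denominator gives normalized cap measure at least $(\varepsilon^2/8)^{(d-1)/2}/(d-1)\ge \varepsilon^{d-1}/(2d\,8^{(d-1)/2})$, which is precisely the constant you need; the case $d=2$ is handled directly. So there is no gap, only a routine computation you should spell out.
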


Together with Lemma \ref{ThmTDLSdiscrete}, this result is used in \cite{BrunelHausdorff2017} in order to compute an approximate convex hull in $\DS O\left(d^28^{\frac{d}{2}}n^{2-1/d}(\ln n)^{1/d}\right)$ steps if the data are uniformly distributed in a general convex body and $\DS O\left(d^28^{\frac{d}{2}}n^{\frac{3d-1}{d+1}}(\ln n)^{-\frac{d-3}{d+1}}\right)$ steps if the data are uniformly distributed in a smooth convex body. In \cite{BrunelTDLS2017}, an approximate $k$-hull is computed in $O\left(8^{\frac{d}{2}}d^3n^{d}\ln n\right)$ operations, under some extra assumptions on the population level set, when $k=\alpha n$ for some fixed number $\alpha\in (0,1)$. Even though \cite{BrunelTDLS2017} provides an algorithm to approximate the $k$-hull, it still suffers the curse of dimensionality, because of the factor $(\sqrt 8n)^d$ in its complexity.

\bibliographystyle{plain}
\bibliography{Biblio}

\end{document}